\documentclass[11pt]{amsart}

\usepackage{amsmath, amssymb, amsthm, amsfonts, graphicx, pinlabel}

\input{Sections/Preamble}

\begin{document}

\title[A DGA for Legendrian knots from generating families]{A combinatorial DGA for Legendrian knots from generating families}

\author{Michael B. Henry}
\address{Siena College, Loudonville, NY 12211}
\email{mbhenry@siena.edu}

\author{Dan Rutherford}
\address{University of Arkansas, Fayetteville, AR  72701}
\email{drruther@uark.edu}

\begin{abstract}

For a Legendrian knot, $L \subset \R^3$, with a chosen Morse complex sequence (MCS) we construct a differential graded algebra (DGA) whose differential counts ``chord paths'' in the front projection of $L$. The definition of the DGA is motivated by considering Morse-theoretic data from generating families. In particular, when the MCS arises from a generating family, $F$, we give a geometric interpretation of our chord paths as certain broken gradient trajectories which we call ``gradient staircases''. Given two equivalent MCS's we prove the corresponding linearized complexes of the DGA are isomorphic. If the MCS has a standard form, then we show that our DGA agrees with the Chekanov-Eliashberg DGA after changing coordinates by an augmentation. 

\end{abstract}

\maketitle

\section{Introduction}
\mylabel{ch:intro}

In this article we associate a differential graded algebra, $(\mathcal{A}_\mathcal{C}, d)$, to a Legendrian knot, $L$, in standard contact $\R^3$.  Our approach is based on a conjectural extension of previously studied invariants which require as input a generating family, $F$, for $L$.  In the conjectured extension a DGA differential should be defined by counting gradient flow trees for difference functions arising from $F$. However, rather than working with an honest generating family, $F$, which is a one parameter family of functions, the definition of $(\mathcal{A}_\mathcal{C}, d)$ makes use of a Morse complex sequence, $\mathcal{C}$, which is an algebraic/combinatorial structure assigned to the front diagram of $L$. 

Morse complex sequences (abbreviated MCS) were introduced by Pushkar as a combinatorial substitute for generating families. They are designed to abstract the bifurcation behavior that generically occurs in the Morse complexes of a one parameter family of functions and metrics. In particular, after choosing an appropriate metric, $g$, a linear at infinity generating family $F$ for $L$ gives rise to an MCS which we denote as $\mathcal{C}(F, g)$. There is a notion of equivalence for Morse complex sequences which is based on a corresponding notion for generating families (see \cite{Jordan2006} and \cite{Hatcher1973}) and accounts for the ambiguity introduced by the choice of a metric. 

The DGA associated with an MCS is defined as the tensor algebra $\mathcal{A}_\mathcal{C}= \oplus_{n=0}^\infty A_\mathcal{C}^{\otimes n}$ of a $\zz_2$ vector space $A_C$ spanned by crossings and right cusps of the Legendrian knot $L$.  A grading on $\mathcal{A}_\mathcal{C}$ arises from a grading on $A_C$.  The differential $d$ is determined by the Leibniz rule and its restriction to $A_\mathcal{C}$ which is a sum of maps $d_n: A_\mathcal{C} \rightarrow A_\mathcal{C}^{\otimes n}$, $n\geq 1$.    Since there is no $d_0$ term, $d_1: A_\mathcal{C} \rightarrow A_\mathcal{C}$ satisfies $(d_1)^2 =0$ and we refer to $(A_\mathcal{C}, d_1)$ as the {\it linearized complex} of $\mathcal{A}_\mathcal{C}$.  For readers familiar with $A_\infty$-algebras, we note that the maps dual to the $d_n$ provide the dual of $A_\mathcal{C}$ with the structure  of a finite-dimensional $A_\infty$-algebra; see \cite{Civan2011}.

The $d_n$ are defined by counting sequences of vertical markers proceeding from right to left along the front diagram of $L$ which we call ``chord paths''. The MCS, $\mathcal{C}$, imposes restrictions on how successive markers may appear in a chord path. In the case of an MSC, $\mathcal{C} = \mathcal{C}(F, g)$, arising from a generating family, we show that our chord paths may be interpreted geometrically as certain broken gradient trajectories which we call ``gradient staircases''. Conjecturally, gradient staircases form the limiting objects of more usual gradient trajectories as the metric is rescaled towards a degenerate limit. 

Two of the main results of this paper are the following:

\begin{reptheorem}{thm:d2is0}
If $\mathcal{C}$ is an MCS for a nearly plat position Legendrian knot $L$, then 
$(\mathcal{A}_\mathcal{C}, d)$  
is a DGA. That is, $d$ has degree $-1$, satisfies the Liebniz rule, and has $d^2 = 0$. 
\end{reptheorem}

\begin{reptheorem}{thm:linear-iso}
Let $\sMCS^{+}$ and $\sMCS^{-}$ be MCSs for a nearly plat position Legendrian knot $L$.  If  $\sMCS^{+}$ and $\sMCS^{-}$ are equivalent, then the linearized complexes $(A_{\sMCS^{+}}, d^{+}_1)$ and $(A_{\sMCS^{-}}, d^{-}_1)$ are isomorphic.
\end{reptheorem}

Differential graded algebras were famously associated to Legendrian knots by Chekanov in \cite{Chekanov2002a}. He introduced an invariant $(\mathcal{A}(L), \partial)$ which has come to be known as the Chekanov-Eliashberg DGA since it provides a combinatorial approach to the Legendrian contact homology of \cite{Eliashberg2000} which is defined via counting holomorphic disks; see \cite{Etnyre2002} for the translation between these two invariants.  Recently, a close connection--not yet fully understood--has emerged between generating families and augmentations of the Chekanov-Eliashberg DGA; see \cite{Chekanov2005}, \cite{Fuchs2003}, \cite{Fuchs2004}, \cite{Fuchs2008}, \cite{Jordan2006}, \cite{L.Traynor2004},  and \cite{Sabloff2005}. One motivation for the current work was to try to further strengthen the ties between generating families and augmentations.

An augmentation is a DGA homomorphism $\epsilon: \mathcal{A}(L) \rightarrow \zz_2$.  Given an augmentation, $\epsilon$, a new differential, $\df^\saug$, arises on $\mathcal{A}(L)$ from an associated change of coordinates.  Our main result relating our construction to the Chekanov-Eliashberg algebra involves Morse complex sequences of a special form which exist within each MCS equivalence class.

\begin{reptheorem}{thm:A-form-equivalence}
Given an $A$-form MCS $\sMCS$ with corresponding augmentation $\saug$, there is a grading-preserving bijection between the generators of $\salg_{\sMCS}$ and $\salg(\sfront)$ such that $d = \df^{\saug}$, and hence $(\salg_{\sMCS}, d) = (\salg(\sfront), \df^{\saug})$.
\end{reptheorem}

Theorems~\ref{thm:linear-iso}~and~\ref{thm:A-form-equivalence}, along with the fact that every MCS is equivalent to an $A$-form MCS, give the following corollary relating the linearized homology of the MCS-DGA with the linearized contact homology of the CE-DGA.

\begin{repcorollary}{cor:linear-homology}
Let $H^{\sMCS}_{*}(\sfront)$ denote the homology groups of $(A_{\sMCS}, d_1)$ for an MCS $\sMCS \in \sFMCS$ and let $LCH_{*}^{\saug}(\sfront)$ denote the homology groups of $(\salg_1(\sfront), \df^{\saug}_1)$ for an augmentation $\saug \in \sAugL$. Then $$\{H^{\sMCS}_{*}(\sfront)\}_{\sMCS \in \sFMCS} = \{LCH_{*}^{\saug}(\sfront)\}_{\saug \in \sAugL}.$$
\end{repcorollary}

\subsection{Outline of the article} 
In Section 2, we recall the relevant background material about generating families and the Chekanov-Eliashberg DGA. Section 3 provides a sketch of the Morse theoretic method for defining a DGA from a generating family $F$, which is based on communications with Josh Sabloff. This serves as a differential topological motivation for the combinatorial DGA $(A_{\mathcal{C}}, d)$. Such a generating family DGA has yet to be rigorously defined, so an effort is made to keep this discussion brief.

In Section 4, the fundamental, yet lengthy, definition of a Morse complex sequence is provided, and the construction of an MCS from a generating family is given. Section 5 contains the main work of the paper. First, the DGA, $(A_{\mathcal{C}}, d)$, is defined via chord paths. Then, Theorems~\ref{thm:d2is0}~and~~\ref{thm:linear-iso} are proven by a hands-on analysis of the boundary points of relevant ``1-dimensional'' spaces of chord paths. 

In Section 6 we give a geometric explanation for chord paths as gradient staircases; see Proposition~\ref{prop:GSCorr}. In addition, a conjecture about realizing gradient staircases as the limit of gradient trajectories is stated. This provides the link between the combinatorial DGA $(A_{\mathcal{C}}, d)$ and the proposed generating function DGA of Section 3. Furthermore, the conjecture together with Theorems~\ref{thm:d2is0}~and~~\ref{thm:linear-iso} would provide an alternate approach to the main result of \cite{Fuchs2008}.

Finally, in Section 7 we recall the relevant results from \cite{Henry2011} on $A$-form MCSs, and conclude the article with a proof of Theorem~\ref{thm:A-form-equivalence}.

\subsection{Acknowledgments}

The September 2008 workshop conference on Legendrian knots at the American Institute of Mathematics (AIM) led to our current collaboration and sparked many interesting questions concerning generating families and Morse complex sequences. We would like to thank AIM and the workshop attendees Sergei Chmutov, Dmitry Fuchs, Victor Goryunov, Paul Melvin, Josh Sabloff, and Lisa Traynor for many fruitful discussions both during and following the workshop. In a communication to these attendees, Josh Sabloff outlined a program to define products on generating family homology using gradient flow trees. This program was a central motivation for our current work and so we wish to thank Josh again for his guidance. The second author would also like to thank Lenny Ng for many helpful conversations throughout this project. 
\section{Background}
\mylabel{ch:Background}

A \emph{Legendrian knot} $\sLeg$ is a smooth knot in $\rr^3$ whose tangent space sits in the 2-plane distribution determined by the contact structure $\xi_{\text{std}} = \ker(dz-ydx)$. The projection of $\sLeg$ to the $xz$-plane is called the \emph{front projection} and the projection to the $xy$-plane is called the \emph{Lagrangian projection}. We will denote both projections by $\sLeg$ and, when necessary, provide context to prevent ambiguities. 

A Legendrian knot may be isotoped slightly to ensure the singularities in the front projection are transverse crossings and semicubical cusps. We say a front projection is in \emph{plat position} if the left cusps have the same $x$-coordinate, the right cusps have the same $x$-coordinate, and no two crossings have the same $x$-coordinate. A front projection $\sfront$ is \emph{nearly plat} if it is the result of perturbing a front projection in plat position slightly so that the cusps have distinct $x$-coordinates. Every Legendrian knot class admits a plat and, hence, a nearly plat representative and, unless otherwise noted, all Legendrian knots in this article are assumed to be nearly plat.  

The resolution algorithm defined by Ng in \cite{Ng2003} relates front and Lagrangian projections and, hence, provides an important bridge between invariants defined from these projections. The algorithm isotopes a front projection $\sfront$ to a front projection $\sfront'$ so that the Lagrangian projection of $L'$ is topologically similar to $\sfront$. 
Figure~\ref{f:Ng-resolution} describes the isotopy on $\sfront$ near crossings and cusps and the result of this isotopy in the Lagrangian projection. 
The front $\sfront'$ is formed by stretching $\sfront$ in the $x$ direction and modifying the slopes of the strands of $\sfront$. The strands of $\sfront'$ have constant, decreasing slopes from top to bottom except near crossings and right cusps where the slopes of two consecutive strands are exchanged. Any front projection with strands arranged in this manner is said to be in \emph{Ng form}.  
Note that the crossings and right cusps of a front projection in Ng form are in bijection with the crossings of the corresponding Lagrangian projection.

\begin{figure}[t]
\centering
\includegraphics[scale=.5]{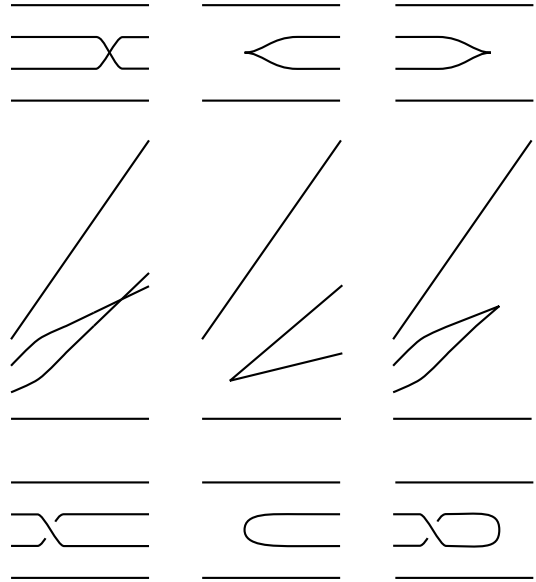}
\caption[Stretching a left cusp.]{The resolution algorithm near a crossing, left cusp, and right cusp. From top to bottom, the three rows show the original front projection, stretched front projection and resulting Lagrangian projection.}
\label{f:Ng-resolution}
\end{figure}

The Legendrian contact homology of Eliashberg and Hofer \cite{Eliashberg2000} provides a differential graded algebra that has given rise to many Legendrian invariants. In \cite{Chekanov2002a}, Chekanov formulates the differential graded algebra combinatorially and extracts a Legendrian invariant capable of distinguishing Legendrian knot classes not distinguished by the classical invariants. The resulting differential graded algebra is known as the \emph{Chekanov-Eliashberg differential graded algebra}, abbreviated CE-DGA and denoted $(\salg(\sLeg), \df)$. We will use Ng's formulation of $(\salg(\sLeg), \df)$ given in terms of the front projection; see \cite{Ng2003}. We refer the reader to any of \cite{Chekanov2002a,Chekanov2002,Etnyre2002,Sabloff2005} for a more complete introduction to the CE-DGA. 

The algebra $\salg(\sLeg)$ is the unital tensor algebra of the $\zz_2$ vector space $A(L)$ freely generated by labels $Q = \{q_1, \hdots, q_n \}$ assigned to the crossings and right cusps of the front projection $\sfront$. In this article, we will consider Legendrian knots admitting MCSs. Such Legendrian knots admit graded normal rulings, hence, we may assume the classical invariant known as the rotation number, $r(L)$, is $0$; see \cite{Henry2011} and \cite{Sabloff2005}.  Thus, $(\salg(\sfront), \df)$ has a $\zz$-grading which may be defined in terms of a Maslov potential on the front projection $\sfront$. 

\begin{figure}[t]
\labellist
\small\hair 2pt
\pinlabel {$i$} [tl] at 5 11
\pinlabel {$i+1$} [br] at 13 27
\pinlabel {$i$} [tl] at 102 11
\pinlabel {$i+1$} [bl] at 102 27
\endlabellist
\centering
\includegraphics[scale=1]{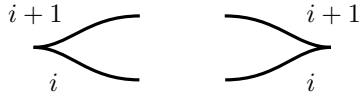}
\caption[]{The Maslov relation near cusps.}
\label{f:Maslov}
\end{figure}

\begin{definition}
	\mylabel{defn:Maslov}	A \emph{Maslov potential} on $\sfront$ is a map $\sMaslov$ from $\sLeg$ to $\Z$ which is constant except at cusps where it satisfies the relation shown in Figure~\ref{f:Maslov}.
\end{definition}

The grading on $(\salg(\sLeg), \df)$ is defined on generators and then extended additively to  $\salg(\sLeg)$. If $a \in Q$ corresponds to a right cusp, then $|a|=1$. If $a$ corresponds to a crossing, then $|a| = \sMaslov(T) - \sMaslov(B)$ where $T$ and $B$ are the strands crossing at $q$ and $T$ has smaller slope. 

An \emph{admissible disk} on a front projection $\sfront$ is an immersion of the disk $D^2$ into the $xz$-plane which satisfies the following: the image of $\df D^2$ is on $\sfront$; the map is smooth except possibly at crossings and cusps; the image of the map near singularity points looks locally like the diagrams in Figures~\ref{f:admissible-disks-1}; and there is one originating singularity (Figure~\ref{f:admissible-disks-1}~(a)) and one terminating singularity (Figure~\ref{f:admissible-disks-1}~(b)). The front projection is assumed to be nearly plat. Otherwise, additional local neighborhoods around singularity points must be considered; see Figure~5 in \cite{Ng2003}.

\begin{figure}[t]
\labellist
\small\hair 2pt
\pinlabel {(a)} [tl] at 108 18
\pinlabel {(b)} [tl] at 280 18
\pinlabel {(c)} [tl] at 386 18
\endlabellist
\centering
\includegraphics[scale=.7]{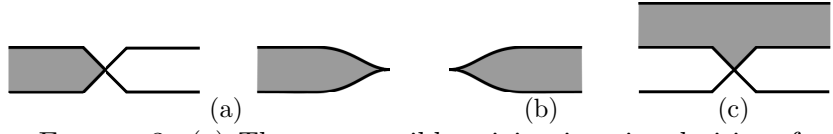}
\caption{(a) The two possible originating singularities of an admissible disk. (b) The terminating singularity of an admissible disk. (c) A convex corner. A second possible convex corner results from reflecting (c) across a horizontal axis.}
\label{f:admissible-disks-1}
\end{figure}

An admissible disk of a nearly plat front projection is embedded away from singularities. Hence the terminating (resp. originating) singularity is the left-most (resp. right-most) point in the image of the disk and at most one singularity can occur at a given crossing of $\sfront$. Given an admissible disk $D$ originating at $a$, let  $b_1, \hdots, b_k \in Q(\sfront)$ denote the convex corners of $D$ (Figure~\ref{f:admissible-disks-1}~(c)), ordered with respect to the counter-clockwise orientation on $\df D$ where $b_1$ is the first convex corner counter-clockwise from $a$. We associate the monomial $w(D) = b_1 \hdots b_k$ to $D$. If $D$ has no convex corners, then $w(D) = 1$. 

\begin{definition}
	\mylabel{defn:DGA-boundary}
	The differential $\df$ on the algebra $\aac(\sfront)$ is defined on a generator $a \in Q$ by the formula:
	\begin{equation}
		\mylabel{eq:DGA-diff}
		\df(a) = 
		\begin{cases} 
		\sum_{w} \# (\Delta(a, w)) w,  & \mbox{if }a\mbox{ is a crossing} \\
		1+\sum_{w} \# (\Delta(a, w)) w, & \mbox{if }a\mbox{ is a right cusp} 
		\end{cases}
	\end{equation}
	\noindent where the sum is over all monomials $w \in \salg(\sfront)$, $\Delta(a, w)$ is the diffeomorphism class of admissible disks originating at $a$ with $w(D) = w$, and $\# (\Delta(a, w))$ is the mod 2 count of the elements in $\Delta(a, w)$. We extend $\df$ to all of $\aac(\sfront)$ by linearity and the Leibniz rule. 
\end{definition}

\begin{theorem}[\cite{Chekanov2002a}, \cite{Ng2003}]
	\mylabel{thm:CE-DGA-diff}
	The differential $\df$ satisfies:
	\begin{enumerate}
		\item The sum in equation~(\ref{eq:DGA-diff}) is finite;
		\item $| \df a | = |a| - 1 $ modulo $2 r(L)$; and 
		\item $\df \circ \df = 0$.
	\end{enumerate}
	The homology of the CE-DGA $(\aac(\sLagr), \df)$ is a Legendrian isotopy invariant, as is its stable-tame isomorphism class.
\end{theorem}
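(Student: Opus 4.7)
The plan is to verify the three algebraic assertions about $\df$ and then handle invariance separately by reducing to the Legendrian Reidemeister moves. For part (1), I would argue that for each fixed generator $a\in Q$, the set of admissible disks originating at $a$ is finite. The key point is that since $\sfront$ is nearly plat and an admissible disk is embedded away from its singularities, the image of the disk lies inside the compact region of the $xz$-plane bounded on the right by the originating singularity $a$ and on the left by its terminating singularity. The boundary of any such disk is an immersed loop on $\sfront$ whose combinatorial type is determined by the finite sequence of crossings and cusps it encounters; a bound on the number of convex corners follows from the fact that the boundary projects monotonically between singularities and the number of strands involved is at most the number of strands of $\sfront$. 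This yields finiteness of the monomials $w$ and, for each $w$, finiteness of $\Delta(a,w)$.

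For part (2), I would compute the grading shift by tracking the Maslov potential $\sMaslov$ along $\partial D$ for an admissible disk $D$ contributing to $\df(a)$. At a right cusp traversed by $\partial D$ the potential jumps by $\pm 1$ according to the relation in Figure~\ref{f:Maslov}; at a convex corner $b_j$ it jumps by exactly $-|b_j|$ (since the boundary switches strands in the opposite way from the one defining $|b_j|$); and at the originating singularity $a$ the net change is $|a|-1$ when $a$ is a crossing, or $|a|$ when $a$ is a right cusp. Requiring the total change around the closed boundary to vanish modulo $2r(L)$ gives $|a|-1 = \sum_j |b_j| = |w|$ in the crossing case, and similarly with the $1$ absorbed in the right cusp case, which is the claim.

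For part (3), the method I would follow is the standard broken disk argument. I would analyze the pairs of disks $(D_1,D_2)$ contributing a monomial to $\df^2(a)$: $D_1$ originates at $a$ and has some convex corner $b$, and $D_2$ originates at $b$. Glue $D_1$ and $D_2$ along $b$ to obtain a ``broken'' disk, and show that each broken disk appears as one end of a unique one-parameter family of immersed disks whose other end is a different broken disk. The generic interior events in the family (passing a crossing, passing a cusp) are controlled by a case analysis already partly visible in the admissible disk moves of \cite{Chekanov2002a,Ng2003}. Pairing broken disks this way cancels all terms of $\df^2(a)$ over $\zz_2$. \textbf{This combinatorial pairing of broken disks is the main obstacle}: one has to enumerate all local models for a one-parameter degeneration near a crossing, a cusp, or when two convex corners collide, and verify that each model produces exactly two endpoints. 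The nearly plat hypothesis is what makes this enumeration manageable.

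For the invariance statement, I would reduce to checking that each Legendrian Reidemeister move induces a stable-tame isomorphism of the associated DGAs. For planar isotopies that do not change combinatorial type the DGA is literally unchanged. Each non-trivial move is handled by exhibiting an explicit tame isomorphism, possibly after stabilizing by adding a pair of canceling generators $(e_1,e_2)$ with $\df e_1=e_2$, $\df e_2=0$; the formulas are the ones in \cite{Chekanov2002a} adapted to the front picture in \cite{Ng2003}. The hard move is the triple point move, where the isomorphism is conjugation by an elementary automorphism whose construction must be matched carefully against the admissible disk counts on both sides. Combined with parts (1)--(3), this gives that the stable-tame isomorphism class, and hence the homology, is a Legendrian isotopy invariant.
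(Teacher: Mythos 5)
The paper does not prove this theorem: it is imported verbatim as background, with the proofs deferred to \cite{Chekanov2002a} and \cite{Ng2003}, so there is no internal argument to measure yours against. Judged against those sources, your outline follows the standard strategy for parts (1) and (2) and for the algebraic framework of invariance (stable-tame isomorphisms, one elementary automorphism per Reidemeister move, stabilization by a canceling pair). One substantive difference of route in part (3) and in the invariance argument is worth flagging: for the front-projection formulation used in this paper, Ng does not carry out the broken-disk, one-parameter-family analysis directly on the front diagram as you propose. Instead he uses the resolution procedure (Figure~\ref{f:Ng-resolution} here) to identify admissible disks on the front with the immersed polygons of Chekanov's Lagrangian-projection DGA, and then invokes Chekanov's $\partial^2=0$ and invariance theorems there. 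Your direct front-diagram pairing is plausible and is close in spirit to the chord-path gluing arguments this paper develops for its own DGA in Section~\ref{ch:DGA-to-MCS}, but it commits you to extra work you have not enumerated: the degenerations of a one-parameter family of admissible disks include interactions of the moving boundary arc with left and right cusps of the front (not only crossings and colliding convex corners), and the ``$1+$'' term in the right-cusp differential must be accounted for in the cancellation. Likewise, for invariance you cannot literally say the DGA is unchanged under planar isotopy, since the nearly plat hypothesis is not preserved by arbitrary planar isotopies and additional local disk models appear for non-plat fronts (the paper itself points to Figure~5 of \cite{Ng2003} for these). Passing through the Lagrangian projection, as Ng does, sidesteps all of this at the cost of proving the translation between the two disk counts.
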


In \cite{Chekanov2002a} and \cite{Chekanov2002}, Chekanov considers a type of algebra homomorphism on $(\aac(\sLagr), \df)$, called an augmentation, which allows us to extract a finite dimensional linear chain complex from the infinite dimensional DGA $(\aac(\sfront), \df)$. These maps, along with the resulting linear homology groups, have provided easily computable Legendrian knot invariants. An \emph{augmentation} is an algebra map $\saug : (\aac(\sfront), \df) \to \zz_2$ satisfying $\saug(1) = 1$, $\saug \circ \df = 0$, and if $\saug (q_i) = 1$ then $ |q_i| = 0 $. We let $\sAugL$ denote the set of augmentations of $(\aac(\sLagr), \df)$. Given $\saug \in \sAugL$ we define the algebra homomorphism $\phi^{\saug} : \aac(\sfront) \to \aac(\sfront)$ to be the extension of the map on generators given by $\phi^{\saug}(q) = q + \saug(q)$. This allows us to consider an alternate differential on $\aac(\sLagr)$ defined by $\df^{\saug}= \phi^{\saug} \df (\phi^{\saug})^{-1}$.

As a vector space, $\aac(\sfront)$ decomposes as $\aac(\sfront)= \bigoplus_{n=0}^{\infty} A(\sfront)^{\otimes n}$ where $A(\sfront)^{\otimes 0}=\zz_2$ and $A(\sfront)^{\otimes n}$ is spanned by the monomials of length $n$ in the generators from $Q$. The differential $\df$ decomposes as $\df = \sum_{n=0}^{\infty} \df_n$ where $\df_n q$ is the sum of monomials of length $n$ in $\df q$. The differential $\df^{\saug}$ has the comparative advantage that $\df^{\saug}_0 = 0$. Thus, $(\df_1^{\saug})^2=0$ and $(A(L), \df_1^{\saug})$ is a finite dimensional chain complex  with homology groups $LCH_{*}^{\saug}(\sLagr)$ called the \emph{linearized contact homology of $\saug$}. The set of homology groups $\{LCH_{*}^{\saug}(\sfront)\}_{\saug \in \sAugL}$ is a Legendrian knot invariant.

\subsection{Generating Families}
\mylabel{sec:Gen-fams}
Consider a function $F : \rr \times \rr^N \to \rr$.  We use coordinates $(x, e), x\in \rr$ and $e\in \rr^N$ for the domain which shall be viewed as a trivial vector bundle over $\rr$.  
Denote by $S_F$ the {\it fiber critical set}
\[
S_F = \left \{ (x,e) \, \left | \, \frac{\partial F}{\partial e}(x,e) = 0 \right \} \right..
\]
We will assume the matrix $\displaystyle \left [ \frac{\partial^2 F}{\partial x \partial e}(x,e) \, \frac{\partial^2 F}{\partial e^2}(x,e) \right ]$ has full rank at all $(x,e) \in S_F$, thus $S_F$ is a $1$-dimensional submanifold of $\rr \times \rr^N$.  Then the map
\[
i_F : S_F \to \rr^3, \quad i_F(x,e) = \left (x, \frac{\partial F}{\partial x}(x,e), F(x, e) \right )
\]
gives a Legendrian immersion.  If $L \subset \rr^3$ is a Legendrian knot or link with $i_F(S_F)= L$, we say that $F$ is a {\it generating family} for $L$. Figure~\ref{f:gen-fam-example} gives a generating family for a Legendrian unknot. We will require generating families to  be {\it linear at infinity}.  That is, $F$ should be equal to some nonzero linear function, $F(x,e) = l(e)$, outside of a compact subset $K \subset \rr \times \rr^N$. 

\begin{figure}[t]
\labellist
\small\hair 2pt
\pinlabel {$x$} [tl] at 958 217
\pinlabel {$z$} [tl] at 115 491
\pinlabel {$x_4$} [tl] at 430 238
\pinlabel {$F(x_4, \cdot)$} [tl] at 490 190
\endlabellist
\centering
\includegraphics[scale=.34]{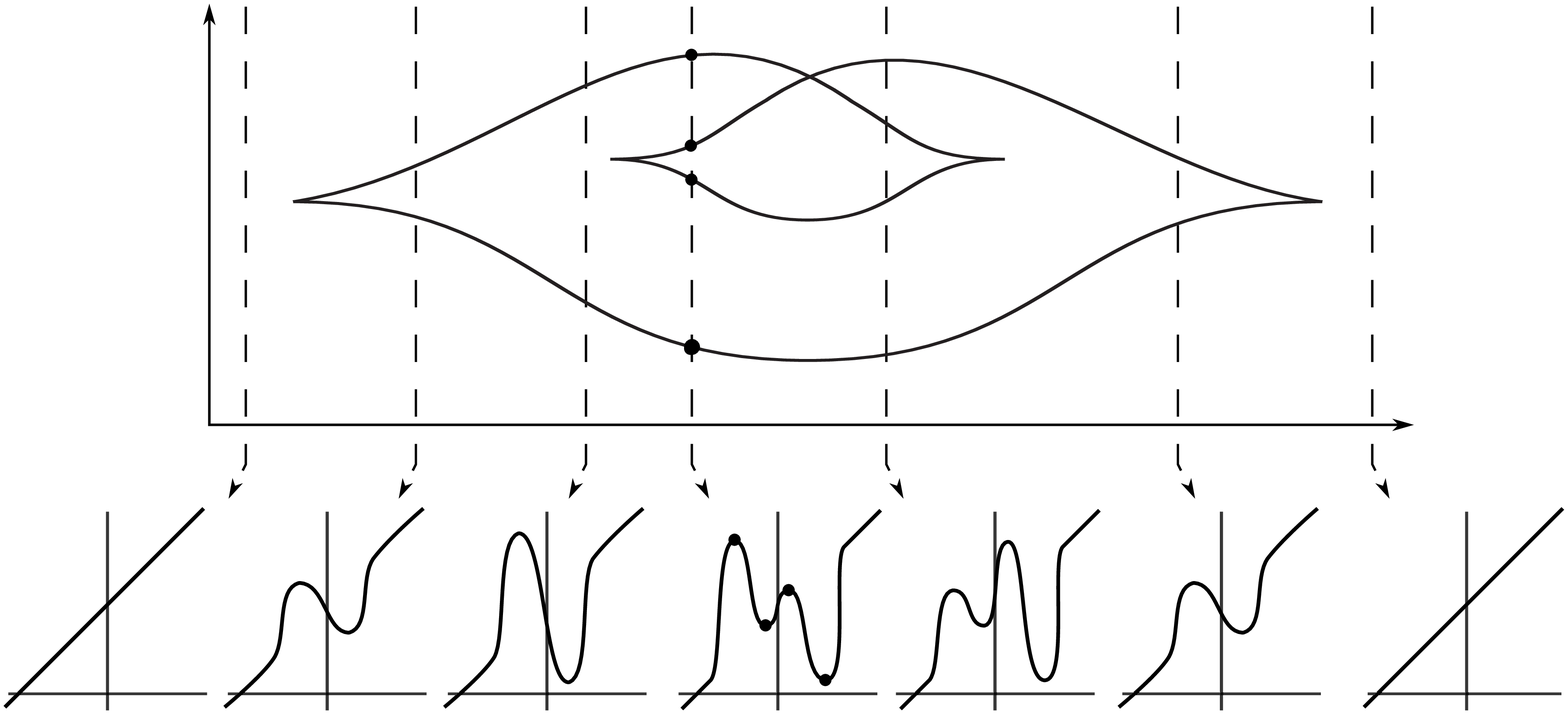}
\caption[A generating family for a Legendrian unknot.]{A linear at infinity generating family $F : \rr \times \rr \to \rr$ for a Legendrian unknot $L$. The graph of $F(x, \cdot): \rr \to \rr$ is given for several values of $x$. The four intersection points of $L \cap (\{x_4\} \times \rr)$ correspond to the four critical points of $F(x_4, \cdot)$.}
\label{f:gen-fam-example}
\end{figure} 

\begin{remark}  Not all Legendrian links $L \subset \rr^3$ admit generating families.  In fact, $L$ admits a linear at infinity generating family if and only if $L$ has a graded normal ruling.  These conditions are in turn equivalent to the existence of a graded augmentation of the Chekanov-Eliashberg DGA. See \cite{Chekanov2005}, \cite{Fuchs2003}, \cite{Fuchs2004}, \cite{Fuchs2008}, \cite{Ng2006}, and \cite{Sabloff2005} for details on these results.
\end{remark}

\section{Motivation from generating families}
\mylabel{ch:Gen-fams}

Generating families have been used to study Legendrian knots in standard contact $\R^3$ as well as in the $1$-jet space $J^1(S^1)$; see \cite{Jordan2006}, \cite{Traynor1997a}, or \cite{Traynor2001}.  In this section, we begin by recalling a construction of Pushkar (this is closely related to the approach in  \cite{Traynor2001} and \cite{Jordan2006}) which uses a generating family to assign homology groups to a Legendrian knot.  In the remainder of the section we sketch a Morse theoretic approach to extending a complex that computes these homology groups to a ``generating family DGA''.  The rigorous construction of such a DGA would involve a detailed study of spaces of difference flow trees (described below) in an appropriate analytic setting, and this is far beyond the scope of the present paper.  Our sole purpose for including this discussion is as motivation for the combinatorially defined DGA which appears later in Section~\ref{ch:DGA-to-MCS}.  As such, the exposition here is kept to a minimum.


\subsection{Generating family homology}
Given a linear at infinity generating family $F$ for a Legendrian knot $L$, one considers a corresponding {\it difference function} $w$ defined on the fiber product of the domain of $F$ with itself:
\begin{equation*}
w: \R \times \R^N \times \R^N\to \R, \quad w(x, e_1, e_2) = F(x, e_1) - F(x, e_2).
\end{equation*}
The {\it generating family homology} of $F$ is defined as the grading shifted (singular) homology groups
\begin{equation*} 
\mathcal{G}H_*(F) = H_{*+N+1}(w_{\geq \delta}, w_{= \delta}; \Z_2)
\end{equation*}
where $\delta$ is taken small enough so that the interval $(0, \delta)$ does not contain critical values. 
Given a fixed Legendrian $L$ one can consider the set of possible isomorphism types of graded groups $\{ \mathcal{G}H_*(F) \}$ where $F$ is any generating family for $L$, and the result is a Legendrian knot invariant. 

When computing $ \mathcal{G}H_*(F) $ one may use a grading shifted version of the Morse complex for the difference function, $\left ( C^{[\delta, +\infty)}_\ast(w; \Z_2), d \right )$. Specifically, we define a complex $(A_F, d_1)$ as follows. Let $\mathit{Crit}^+(w)$ denote the set of critical points $q_i$ of $w$ with positive critical value, $w( q_i) \geq \delta$. Let $A_F$ be the $\Z$-graded vector space over $\Z_2$ with basis $\mathit{Crit}^+(w)$ . A generator $q_i$ is assigned the degree
\begin{equation} 
|q_i| = \mathit{ind}(q_i) - (N +1),
\end{equation}
where $\mathit{ind}(q_i)$ denotes the Morse index and $N$ is the dimension of the fiber in the domain of $F$.  As in the usual Morse complex, the differential $d_1$ depends on an appropriately chosen metric and is defined by counting negative gradient trajectories connecting critical points of adjacent Morse index.


When $F$ is a generating family for a Legendrian $L \subset \R^3$, the critical points of $w$ with positive critical value are in one to one correspondence with the crossings of the $xy$-projection of $L$. Indeed,
\[\displaystyle
\frac{\partial w}{\partial e_1}(x,e_1, e_2) = \frac{\partial w}{\partial e_2}(x, e_1, e_2) = 0 \quad \mbox{implies that} \quad (x,e_1), (x, e_2) \in S_F,
\]
and $\displaystyle
\frac{\partial w}{\partial x}(x,e_1, e_2) = 0$ implies the second coordinates of $i_F(x, e_1)$ and $i_F(x, e_2)$ agree.
Thus, the (linear) generators of $A_F$ are in bijection with the (algebraic) generators of the Chekanov-Eliashberg DGA $\mathcal{A}(L)$, and the degrees of the corresponding generators can be shown to be the same as well; see, for instance, Proposition 5.2 of \cite{Fuchs2008}. 

In fact, it is shown in \cite{Fuchs2008} that there exists an augmentation $\epsilon$ so that the homology of the corresponding linearized complex $(\aac_1(\sLagr), \df_1^{\saug})$ is isomorphic to $\mathcal{G}H_*(F)$. This suggests the more ambitious question:

\bigskip


{\it Given a Legendrian knot $L$ with linear at infinity generating family $F$, is it possible to construct a DGA whose linearized homology agrees with the generating family homology, $\mathcal{G}H_*(F)$, using only data arising from $F$?}

\bigskip

More specifically, one could hope to extend the differential from the Morse complex, $d_1 : A_F \rightarrow A_F$, to a differential on the tensor algebra $\mathcal{A}_F= \oplus_{n=0}^\infty A_F^{\otimes n}$.  The problem then becomes to produce higher order terms $d_n : A_F \rightarrow A_F^{\otimes n}$ so that  $d= d_1 + d_2 + d_3 \ldots$ satisfies $d^2 =0$ after extending by the Leibniz rule.  Note that it is reasonable to expect $d_0 =0$ because of the form of the differentials, $\partial^\epsilon$, on the Chekanov-Eliashberg algebra. 

Works of Betz-Cohen \cite{Betz1994} and Fukaya \cite{Fukaya1997a} give an approach to the cup product and higher order product structures on the cohomology of a manifold $M$ via Morse theory.  Such product structures are realized at the chain level by counting maps from trees into $M$ whose edges parametrize gradient trajectories of certain Morse functions.  A variation on these techniques adapted to the present context was suggested to the authors by Josh Sabloff and will be  sketched in the following subsection.

\subsection{A generating family DGA}

We outline a definition of the higher order terms $d_2, d_3, \ldots$ on the generators $q_i$.  Together with the Leibniz rule this determines a differential on all of $\mathcal{A}_F$.  The first order term  $d_1$ was defined by counting gradient trajectories of the difference function $w$.  For $d_n$ with $n \geq 2$, we count ``difference flow trees'' which are trees mapped into higher order fiber products of the domain of $F$.  The edges are required to be integral curves to the negative gradients of particular generalized difference functions; see Definition \ref{def:DFT} below. 

We begin by describing the domains for our difference flow trees. Following \cite{Fukaya1997a} and \cite{Fukaya1997} we define a {\it metric tree with $1$ input and $n$ outputs} as an oriented tree $\tau$ with $n+1$ $1$-valent vertices (only one of which is oriented as an input) and no $2$-valent vertices together with some additional information; see Figure \ref{fig:DiffTree} for an example.  At each vertex with valence $\geq 3$ there is a single edge oriented into the vertex and an ordering of the outgoing edges is provided.  Finally, each internal edge $E_i$ is assigned a length $l_i \in (0, +\infty)$.

The ordering of edges at vertices specifies, up to isotopy,  an embedding of $\tau$ into the unit disk $D^2 \subset \R^2$ with the $1$-valent vertices located on $\partial D^2$.  This provides an ordering of the $1$-valent vertices as $v_0, v_1, \ldots, v_n$ where $v_0$ is the unique inwardly oriented vertex and the numbering runs counter-clockwise.  The complement $D^2 \setminus \tau$ consists of $n+1$ components each containing some arc of $S^1$. We label these arcs and the corresponding regions from $1$ to $n+1$ beginning to the left of $v_0$ and working counterclockwise; see Figure \ref{fig:DiffTree}. 

Next, we define generalized difference functions.  
Let $P_m$ denote the $m$-th order fiber product 
\begin{equation*} 
P_m := \R \times \underbrace{(\R^N \times \ldots \times \R^N)}_{m \,\,\textit{times}}.
\end{equation*}
Each choice of $1 \leq i < j \leq m$ produces a difference function
\[\begin{array}{cl}
w^m_{i,j} : P_m \rightarrow \R, & \\
w^m_{i,j}(x, e_1, \ldots e_m) = &F(x, e_i) - F(x, e_j) + (e_1^2+ \cdots + e^2_{i-1}) \\ &-(e_{i+1}^2+ \cdots+ e_{j-1}^2) +(e_{j+1}^2 +\cdots + e^2_{m}).
\end{array}
\]

Note that there is a clear one-to-one correspondence between the critical points of any of the 
$w^m_{i,j}$ and the critical points of $w = w^2_{1,2}$ which we denote by
\begin{equation} \label{eq:CritP}
\beta^m_{i,j} : \mathit{Crit}^+(w) \stackrel{\cong}{\longrightarrow} \mathit{Crit}^+(w^m_{i,j}).
\end{equation}

We are now able to give the key definition of this section.
\begin{definition} \label{def:DFT}
A {\it difference flow tree}, $\Gamma = (\tau, \{\gamma_m\})$, is a metric tree $\tau$ with $1$ input and $n$ outputs where to each edge $E_m$ we assign a map $\gamma_m: I_m \to P_{n+1}$. Here $I_m$ denotes $[0, l_m]$, $(-\infty, 0]$, or $[0, \infty)$ depending on whether the edge is internal, external and oriented into the disk, or external and oriented out of the disk respectively. In addition, we require

\begin{enumerate}

\item
The map $\gamma_m$ should be a negative gradient trajectory for $w^{n+1}_{i,j}$ where $i$ and $j$ are the labels of the regions of $D^2 \setminus \tau$ adjacent to the edge $E_m$; and 

\item
The $\gamma_m$ should fit together to give a continuous map from the entire tree into $P_{n+1}$. In particular, we require that for external edges $\lim_{t\rightarrow \pm \infty} \gamma_m (t)$ exists. These limits are necessarily critical points of the corresponding difference function. 
\end{enumerate}
\end{definition}

\begin{figure}
\centerline{\includegraphics[scale=1]{./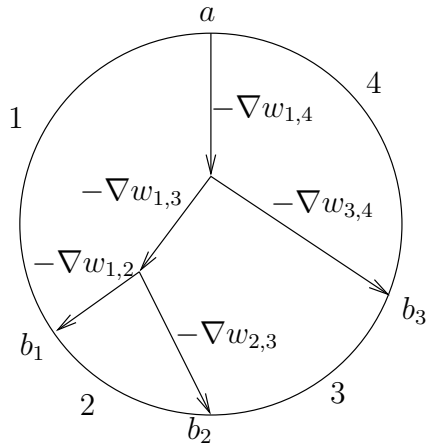}}
\caption{A difference flow tree in $\mathcal{M}^F(a; b_1, b_2, b_3)$.}
\label{fig:DiffTree}
\end{figure}

Now choose critical points $a; b_1, \ldots, b_n \in \mathit{Crit}^+(w)$. As in equation~(\ref{eq:CritP}) these critical points may be identified with the critical points of any of the $w^n_{i,j}$. We then define 
\[
\mathcal{M}^F(a; b_1, \ldots, b_n) 
\]
as the set of difference flow trees with the $1$-valent vertices $v_0, v_1, \ldots, v_n$ mapped to $a, b_1, \ldots, b_n$ respectively; see Figure \ref{fig:DiffTree}.  Finally, we define $d_n: A_F \rightarrow A_F^{\otimes n}$ according to
\begin{equation}
d_n a= \sum \# \mathcal{M}^F(a; b_1, \ldots, b_n) b_1\cdots b_n
\end{equation}
where the sum is over all monomials of length $n$ with $|b_1 \cdots b_n| = |a|-1$.  Here, the grading of $A_F$ is extended additively to $A_F^{\otimes n}$.

Similar spaces of gradient flow trees are studied in \cite{Fukaya1997}.  However, the setup in \cite{Fukaya1997} is a bit more restrictive about the way gradient vector fields are assigned to the edges of a tree so that our difference flow trees do not precisely fit as a special case.  Note also that somewhat different spaces of Morse flow trees have appeared in the work of Ekholm \cite{Ekholm2007} in connection with Legendrian contact homology.


An interesting analysis should be required to rigorously establish Legendrian knot invariants following the sketch of the DGA $\mathcal{A}_F$.  It seems possible that such a construction may work equally well in a higher dimensional setting.  We believe that our combinatorial results provide strong evidence that a rigorously defined generating family DGA exists and hope that our work will encourage a more serious investigation of this topic.

\section{A combinatorial approach to generating families}
\mylabel{ch:MCS}

Morse complex sequences were conceived as a way of encoding Morse-theoretic algebraic information coming from a generating family together with an appropriately chosen Riemannian metric. The definition of a Morse complex sequence (MCS) originates with Pushkar and first appears in print in \cite{Henry2011}. Our presentation differs slightly from that in \cite{Henry2011} as we assign MCSs to a given front projection $L$ rather than encode the front projection as part of the defining data of an MCS.  In this section, after recalling the definition of Morse complex sequences, we define how a generating family together with a choice of metric produces a Morse complex sequence.  The section is concluded with a recollection of the notion of equivalence for Morse complex sequences.

\subsection{A Morse Complex Sequence (MCS) on a front projection} 
\mylabel{sec:MCS-defn}

We begin by fixing a Legendrian knot with front projection $\sfront$ and Maslov  potential $\mu$. The front $\sfront$ need not be nearly plat, although, we do require that crossings and cusps of $\sfront$ have distinct $x$-coordinates.

\begin{definition}
A \emph{handleslide mark} on $\sfront$ is a vertical line segment in the $xz$-plane connecting two strands of $\sfront$ with the same Maslov potential and not intersecting the crossings and cusps of $\sfront$; see, for example, the two handleslide marks in Figure~\ref{f:MCS-trefoil-example}. An \emph{elementary marked tangle} is a portion  of a front diagram $\sfront$ lying between two vertical lines in the $xz$-plane containing either a single crossing, left cusp, right cusp, or handleslide mark. 
\end{definition}

In what follows we will use the convention of labeling the strands of $L$ in increasing order from \emph{top to bottom} along any vertical line $x= x_0$.  We will also use angled brackets $\langle \cdot ,\cdot \rangle$ to denote the usual pairing that arises when working with a vector space with a chosen basis. 

The reader may find it useful to consult Figure~\ref{f:MCS-trefoil-example} while working through the next, rather long, definition.

\begin{definition} 
\label{def:MCS}
A {\it Morse complex sequence} (or {\it MCS}), $\mathcal{C} = \left( \{(C_m, d_m)\}, \{x_m\}, H \right)$, for $L$ consists of a sequence  of values $x_1 < x_2 < \ldots < x_M$ together with a sequence of $\Z$-graded complexes of  vector spaces over $\Z_2$, $(C_1, d_1), \ldots, (C_M, d_M)$, and a collection $H$ of handleslide marks on $L$. 

The following requirements are included in the definition of an MCS.

\begin{enumerate}

\item Each vertical line $x=x_i$, $1 \leq i \leq M$, intersects the front projection $\sfront$ in a non-empty set of points and does not intersect crossings, cusps, or handleslide marks. The vertical lines decompose $\sfront$ into a collection of elementary marked tangles.    

\item For each $1 \leq m \leq M$, $C_m$ has a basis consisting of the points of intersection $L \cap \{ x= x_m\}$ labeled as $e_1, e_2, \ldots, e_{s_m}$ from {\it top to bottom}.  The degree of $e_i$ is given by the Maslov potential of the corresponding strand, $|e_i| = \mu (e_i)$.

\item The differentials $d_m$ have degree $-1$ and are triangular in the sense that
\[\displaystyle
d_m e_i = \sum_{j >i} c_{ij} e_j, \quad c_{ij} \in \Z_2.
\]

\item Suppose $x= x_m$, $m > 1$, is the left border of an elementary marked tangle $T$ containing a crossing or cusp between the strands labeled $k$ and $k+1$. If $T$ contains a left (resp. right) cusp then $\langle d_{m+1} e_k, e_{k+1} \rangle = 1$ (resp. $\langle d_m e_k, e_{k+1} \rangle = 1$). If $T$ contains a crossing, then $\langle d_m e_k, e_{k+1} \rangle = 0$. In the case of $m=1$, $\langle d_m e_1, e_{2} \rangle = 1$.

\item The complexes $(C_m, d_m)$ and $(C_{m+1}, d_{m+1})$ satisfy a relationship depending on the particular tangle lying in the interval $ x_m \leq x \leq x_{m+1}$ as follows:

\end{enumerate}

\noindent (a) {\bf Crossing:}  Supposing the crossing is between strands $k$ and $k+1$, the map $\phi : (C_m, d_m) \rightarrow (C_{m+1}, d_{m+1})$ given by:

			\begin{equation*}
			\phi ( e_i ) =
			\begin{cases}
  e_i &\mbox{ if $i \notin \{k, k+1\} $} \\
  e_{k+1} &\mbox{ if $i = k $} \\
  e_{k} &\mbox{ if $i = k+1 $}, \\
      \end{cases}
      \end{equation*}	
is an isomorphism of complexes.

\smallskip

\noindent (b) {\bf Left Cusp:} Suppose the strands meeting at the cusp are labeled $k$ and $k+1$.  We require the linear map
			\begin{equation*}
			\phi ( e_i) =
			\begin{cases}
  [e_i] &\mbox{ if $i < k $} \\
  [e_{i+2}] &\mbox{ if $i \geq k $}.
      \end{cases}
      \end{equation*}
 to be an isomorphism of complexes from $(C_{m}, d_{m})$ to the quotient of $(C_{m+1}, d_{m+1})$ by the acyclic subcomplex spanned by $\{ e_{k}, d_{m+1} e_{k} \}$. 

\smallskip

\noindent (c) {\bf Right Cusp:}  We make the same requirement as for a left cusp except we reverse the roles of $(C_m, d_m)$ and $(C_{m+1}, d_{m+1})$.

\smallskip

\noindent (d) {\bf Handleslide mark:}  Suppose the handleslide mark is between strands $k$ and $l$ with $k < l$. We require,
			\begin{equation*}
h_{k,l} : (C_m, d_m) \rightarrow (C_{m+1}, d_{m+1}), \quad			h_{k,l} ( e_i) =
			\begin{cases}
 e_i &\mbox{ if $i \neq k $} \\
  e_k + e_l &\mbox{ if $i = k $}.
      \end{cases}
      \end{equation*}
to be an isomorphism of complexes.
\end{definition}

We will denote by $\sFMCS$ the set of Morse complex sequences for $\sfront$. Examples of MCSs, utilizing Barannikov's graphical short-hand for ordered chain complexes from \cite{Barannikov1994}, are given in Figures~\ref{f:MCS-trefoil-example} and \ref{f:MCS-example}.

\begin{remark}
\label{rem:mcs-graphic}
Note that for all $m$ the complexes $(C_m, d_m)$ are acyclic.  This is clearly true for $m = 1$ and the requirements in Definition \ref{def:MCS} relating $(C_m,d_m)$ and $(C_{m+1}, d_{m+1})$ imply that the corresponding homology groups are isomorphic.
\end{remark}

\begin{figure}[t]
\labellist
\small\hair 2pt
\pinlabel {$e_1$} [tl] at 469 228
\pinlabel {$e_2$} [tl] at 530 210
\pinlabel {$e_3$} [tl] at 504 190
\pinlabel {$e_4$} [tl] at 543 166
\pinlabel {$z$} [tl] at 2 224
\pinlabel {$d_5 e_1 = e_2+e_3$} [tl] at 480 150
\pinlabel {$d_5 e_2 = d_5 e_3 = e_4$} [tl] at 480 130
\pinlabel {$d_5 e_4 =0$} [tl] at 480 110
\pinlabel {$x_1$} [bl] at 51 82
\pinlabel {$x_2$} [bl] at 100 82
\pinlabel {$x_3$} [bl] at 192 82
\pinlabel {$x_4$} [bl] at 265 82
\pinlabel {$x_5$} [bl] at 310 82
\pinlabel {$x_6$} [bl] at 366 82
\pinlabel {$x_7$} [bl] at 404 82
\pinlabel {$x_8$} [bl] at 435 82
\pinlabel {$(C_1, d_1)$} [bl] at -5 44
\pinlabel {$(C_2, d_2)$} [bl] at 70 44
\pinlabel {$(C_3, d_3)$} [bl] at 144 44
\pinlabel {$(C_4, d_4)$} [bl] at 217 44
\pinlabel {$(C_5, d_5)$} [bl] at 290 44	
\pinlabel {$(C_6, d_6)$} [bl] at 364 44	
\pinlabel {$(C_7, d_7)$} [bl] at 438 44	
\pinlabel {$(C_8, d_8)$} [bl] at 512 44	
\endlabellist
\centering
\includegraphics[scale=.60]{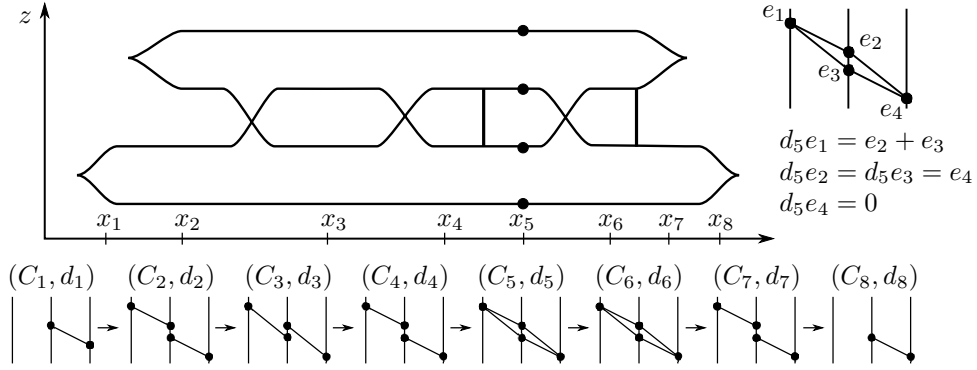}
\caption{An MCS on a Legendrian trefoil. The upper-right inset describes Barannikov's short-hand for the chain complex $(C_5, d_5)$.}
\label{f:MCS-trefoil-example}
\end{figure}

\subsubsection{Graphical presentation of an MCS via implicit handleslide marks}
\label{sec:MCS-graphic}
The isomorphisms required in Definition~\ref{def:MCS}~(5) do not allow $(C_{m+1}, d_{m+1})$ to be uniquely recovered from $(C_m, d_m)$ in the case of a left cusp. As an unfortunate consequence, the handleslide marks of an MCS do not always completely determine the sequence of complexes $(C_m, d_m)$. However, when discussing equivalence of MCSs it is convenient to have an entirely graphical method of encoding an MCS.  Such a graphical presentation requires recording some additional handleslide marks near the cusps as described in the current subsection.

Assume now that a left cusp lies in the region $x_m \leq x \leq x_{m+1}$, but also note that a similar discussion will apply to right cusps.  As in Definition \ref{def:MCS}~(5)~(b) we assume the new strands arising from the cusp correspond to the generators $e_k$ and $e_{k+1}$ in $C_{m+1}$.  Let $(B, d)$ denote the acyclic complex with basis $\{a, b\}$ and differential $d a = b$.  From Definition~\ref{def:MCS}~(5)~(b) it follows that there is an isomorphism of complexes
$F: (C_m, d_m) \oplus (B,d) \rightarrow (C_{m+1}, d_{m+1})$ given by 
\[
\begin{array}{rlr} F(e_i) = & e_i + \langle d_{m+1} e_i, e_{k+1} \rangle e_k, & i < k\\
F(a) = & e_k & \\
F(b) = & d_{m+1} e_k & \\
F(e_i) = & e_{i+2}, & i \geq k
\end{array}
\]
This chain isomorphism can be described as a composition of the handleslide maps $h_{k,l}$ defined in Definition \ref{def:MCS}~(5)~(d). Namely, 

\begin{enumerate}
		\item Let $e_{u_1}, \ldots, e_{u_s}$, $u_1 < \ldots < u_s < k$, denote the generators of $C_{m+1}$ satisfying  $\langle d_{m+1} e_{u_i}, e_{k+1} \rangle = 1$.  
		\item Let  $e_{v_1}, \ldots, e_{v_r}$, $k+1 < v_1 < \ldots < v_r$, denote the generators of $C_{m+1}$ satisfying $\langle d_{m+1} e_{k}, e_{v_i} \rangle = 1$.
\end{enumerate}

Then, 
\begin{equation} \label{eq:HSComp}
F = h_{k+1, v_1} \circ \hdots \circ h_{k+1, v_r} \circ h_{u_1, k} \circ \hdots \circ h_{u_s, k} \circ J,
\end{equation}
 where $J:
 C_m \oplus B \rightarrow C_{m+1}$ simply identifies the two underlying vector spaces via
\[
J(e_i) = \begin{cases} e_i &\mbox{ if $i < k$ } \\
  e_{i+2} & \mbox{ if $i \geq k $},      \end{cases} 
 \quad J(a) = e_k, \quad \mbox{and} \,\,J(b) = e_{k+1}.
 \]

Therefore, we can encode an MCS graphically by including additional markings near the cusps to indicate which handleslides occur in the composition in equation~(\ref{eq:HSComp}).  That is, immediately following a left cusp  we place what we will call an {\it implicit handleslide mark} between strands $\alpha$ and $\beta$ whenever $h_{\alpha,\beta}$ appears in the composition $F$. This amounts to including implicit handleslide marks from $i$ to $k$ if and only if $\langle d_{m+1} e_{i}, e_{k+1} \rangle = 1$ and from $k+1$ to $j$ if and only if $\langle d_{m+1} e_{k}, e_{j} \rangle = 1$. The resulting implicit handleslide marks are represented in our figures by dotted vertical lines to distinguish them from the ordinary handleslide marks, which for clarity we now refer to as \emph{explicit handleslide marks}.  We include implicit handleslide marks near right cusps in an analogous way.  The exact order in which we place implicit handleslide marks does not matter as the handleslide maps occurring in equation~(\ref{eq:HSComp}) commute with one another.

In summary,  $\sMCS \in \sFMCS$ is uniquely represented by the collection of implicit and explicit handleslide marks on $\sfront$. We call this collection the \emph{marked front projection for $\sMCS$}; see, for examples, Figures~\ref{f:MCS-trefoil-example} and \ref{f:MCS-example}. The sequence of complexes $\{(C_m, d_m) \}$ may be reconstructed using (5) (a)-(d) in Definition \ref{def:MCS} and equation~(\ref{eq:HSComp}). It is important to note that not every collection of such marks determines an MCS. 

\begin{figure}[t]
\centering
\includegraphics[scale=.4]{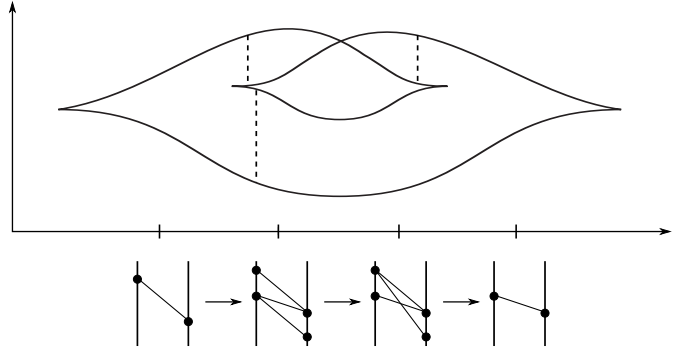}
\caption{The MCS corresponding to the generating family in Figure~\ref{f:gen-fam-example}. The MCS has three implicit handleslide marks.}
\label{f:MCS-example}
\end{figure}

We close this subsection with a dual remark. The sequence of complexes $\{(C_m, d_m) \}$ do not uniquely determine the location of the handleslide marks as conjugation by a handleslide map $h_{k,l}$ will often not affect a differential.

\subsection{From a Generating Family to an MCS} 
\mylabel{sec:Gen-to-MCS}

In this subsection we describe how a sufficiently generic generating family $F: \rr \times \rr^N \to \rr$ for a Legendrian link $L$ together with a (sufficiently generic) choice of metric, $g$, on $ \rr \times \rr^N $ produces an MCS, $\mathcal{C}(F, g)$, for $L$; see, for example, Figures~\ref{f:gen-fam-example}~and~\ref{f:MCS-example}. In fact, this construction motivates both the definition of an MCS as well as the nomenclature. 

Given a linear at infinity generating family $F$ we let $f_x: \R^N \to \R$, $x \in \R$ denote the corresponding family of functions $f_x = F(x, \cdot)$. We make the following assumptions:
\begin{enumerate}

\item $f_x$ is Morse except at finitely many values of $x$ which we denote as $s_1 < \ldots < s_M$.

\item For each $x = s_i$, $f_x$ has a single degenerate critical point which is a standard birth or death point.
\end{enumerate}

\begin{remark}
(i) Conditions (1) and (2) are generic in some appropriate sense so that a linear at infinity generating family $F$ can be made to satisfy (1) and (2) after a perturbation on a compact subset.

(ii) The birth (resp. death) critical points in (2) are exactly the left (resp. right) cusp points of the front projection of the corresponding Legendrian. 
\end{remark}

Recall that $S_F$ denotes the fiberwise critical set of $F$ and let $\Sigma \subset S_F$ denote those $(x, e) \in S_F$ such that $e$ is a degenerate critical point of $f_x$. The Morse index provides a continuous integer valued function on $S_F\setminus \Sigma$ which in turn gives a Maslov potential, $\mu$, for the corresponding Legendrian $L = i_F(S_F)$.

Choosing a family of metrics $g_x$ on $\R^N$ gives rise to gradient vector fields $\nabla f_x$. For those values of $x$ where $f_x$ is Morse and the pair $(f_x, g_x)$ satisfies the Morse-Smale condition we consider the Morse complexes $C(f_x, g_x)$. 
\begin{definition}
A {\it handleslide} is a non-constant gradient trajectory for one of the pairs $(f_x, g_x)$ which connects two non-degenerate critical points of the same Morse index.
\end{definition}

Following \cite{Laudenbach1992}, one can choose the family of metrics $g_x$ so that
\begin{enumerate}

\item Handleslides only occur at a finite number of values of $x$ which we denote as $t_1 < \ldots < t_M$. None of the vertical lines $x=t_i$ should intersect a crossing or a cusp, and at each $x= t_i$ only one handleslide should occur (up to reparametrization).

\item For the values $x= s_i$ where $f_{s_i}$ has a degenerate critical point, we can find $\delta > 0$ such that for any $x^- \in [s_i- \delta, s_i)$ and $x^+ \in (s_i, s_i + \delta]$, the Morse complexes $C(f_{x^-}, g_{x^-})$ and $C(f_{x^+}, g_{x^+})$ are defined and are related as in 
Definition \ref{def:MCS}~(5)~(b) or (c) depending on whether the degenerate critical point is a birth or death.

\item If the interval $[a, b]$ does not contain a degenerate critical point or a handleslide and the Morse complexes $C(f_a, g_a)$ and $C(f_b, g_b)$ are defined\footnote{Another unavoidable feature of one parameter families of gradient vector fields is that at some isolated points a non-transverse intersection may occur between descending and ascending manifolds of critical points of adjacent index. This is inconsequential as the standard way this will happen is that the non-transverse intersection will split into two new trajectories which cancel each other out in the Morse complex (even over $\Z$). See \cite{Laudenbach1992}.} then they are identical. Here we identify the critical points of $f_a$ and $f_b$ that belong to the same components of $S_F \setminus \Sigma$ and this allows us to view the underlying vector spaces as being the same. The differentials are then assumed to be equal.
\end{enumerate}

Given such a family of metrics we define an MCS, $\mathcal{C}(F,g)$, for $L$ in the following way. First add handleslide marks to $L$ where the actual handleslides of the family occur. Choose a sequence $x_0 < x_1 < x_2 < \ldots < x_M < x_{M+1}$ so that $F$ is linear when $x \notin [x_0, x_{M+1}]$ and each region $x_m \leq x \leq x_{m+1}$ contains a single crossing, cusp, or handleslide mark in its interior. Finally, take $(C_m, d_m)$ to be the Morse complex of the pair $(f_{x_m}, g_{x_m})$. It is shown in \cite{Laudenbach1992} that successive complexes $(C_m, d_m)$ and $(C_{m+1}, d_{m+1})$ are related as in Definition~\ref{def:MCS}~(5).

In summary, we have: 

\begin{proposition} \label{prop:GF2MCS} Let $F$ be  a generic linear at infinity generating family for a Legendrian link $L \subset \R^3$.  Then, one can choose a metric $g$ so that there is an MCS $\mathcal{C}(F,g)= \left( \{(C_m, d_m)\}, \{x_1, \hdots, x_M\}, H \right)$ satisfying:
\begin{itemize}
\item The collection of handleslide marks, $H$, correspond to the handleslides of the family $(f_x, g_x)$, and 
\item for $1 \leq m \leq M$, the complex $(C_m, d_m)$ agrees with the Morse complex of $f_{x_m}$ with respect to the metric $g_{x_m}$.
\end{itemize}
\end{proposition}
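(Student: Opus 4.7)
The plan is to produce the MCS data directly from the parametric Morse theory of $(f_x, g_x)$ and then verify each clause of Definition~\ref{def:MCS}. First, invoke Laudenbach's parametric transversality theorem to choose a family of metrics $g_x$ with the three bulleted properties listed after the definition of a handleslide: handleslides occur at finitely many isolated values $t_1 < \cdots < t_N$ (each involving a single trajectory, and none coinciding with a crossing or cusp), at each degenerate value $x=s_i$ the adjacent Morse complexes are related by the birth/death relation of Definition~\ref{def:MCS}~(5)(b)--(c), and on any $x$-interval free of handleslides, births, and deaths the Morse complex is literally constant after identifying critical points via components of $S_F \setminus \Sigma$. A small further perturbation of $g$ ensures that no $t_i$ coincides with the $x$-coordinate of a crossing or cusp of $\sfront$. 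Place an explicit handleslide mark on $\sfront$ at each $x = t_i$ joining the two strands that correspond to the critical points connected by the handleslide, and choose $x_0 < x_1 < \cdots < x_M < x_{M+1}$ so that each region $(x_m, x_{m+1})$ contains exactly one crossing, cusp, or handleslide mark of $\sfront$.

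Next, set $(C_m, d_m)$ equal to the Morse complex of $(f_{x_m}, g_{x_m})$, with its basis identified with $\sfront \cap \{x = x_m\}$ via $i_F$. Since the $z$-coordinate of $i_F(x_m, e)$ equals $F(x_m, e)$, the top-to-bottom ordering of strands along $x = x_m$ matches the ordering of critical points by decreasing $F$-value. The Morse differential only has components $\langle d_m e_i, e_j\rangle \neq 0$ when there is a gradient trajectory from the $i$th to the $j$th critical point, which forces $F(x_m, e_i) > F(x_m, e_j)$ and hence $j > i$; this gives the triangularity in~(3). The Morse index descends to a continuous $\Z$-valued function on $S_F \setminus \Sigma$, which is the Maslov potential up to a constant shift, verifying~(2).

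It remains to check clauses~(4) and~(5). At a crossing in $(x_m, x_{m+1})$, only the top-to-bottom labeling changes because two strands swap $z$-order while the underlying Morse complex is unchanged on the interval; the relabeling is precisely the map $\phi$ of~(5)(a). The genericity condition that no handleslide mark coincides with a crossing forces $\langle d_m e_k, e_{k+1}\rangle = 0$, giving the crossing case of~(4). At a left cusp, two new critical points appear in the Morse-theoretic birth, joined by a canonical short trajectory contributing a unit differential component; this is the acyclic summand of Definition~\ref{def:MCS}~(5)(b) and yields the left-cusp case of~(4). Right cusps are dual. At a handleslide mark, the classical Morse-theoretic change of basis formula gives precisely the conjugation by $h_{k,l}$ required in~(5)(d). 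Finally, the $m=1$ boundary case is forced by the linear-at-infinity condition: beyond the support of the nonlinear part, $f_{x_1}$ has exactly the two critical points of a standard birth, joined by a single trajectory.

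The main obstacle is the appeal to the parametric Morse theory of \cite{Laudenbach1992}: one must know that for generic one-parameter families of gradient vector fields, handleslides are isolated, births and deaths fit the stated local model, and the complex is literally stable in between. Once these parametric transversality statements are in hand, the remainder of the argument is bookkeeping matching each elementary marked tangle of $\sfront$ to the corresponding elementary change in the Morse complex across $(x_m, x_{m+1})$. The only subtle genericity step is arranging simultaneously that the $t_i$ avoid the $x$-coordinates of crossings and cusps and that the complexes are stable on the complement, which is achieved by a compactly supported perturbation of the metric.
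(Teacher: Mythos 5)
Your proposal is correct and follows essentially the same route as the paper: Section~4.2 likewise invokes \cite{Laudenbach1992} to choose the family of metrics with exactly the three properties you list, places handleslide marks at the handleslide values, chooses the $x_m$ so each region contains a single crossing, cusp, or handleslide mark, and takes $(C_m, d_m)$ to be the Morse complex of $(f_{x_m}, g_{x_m})$, citing Laudenbach for the compatibility of successive complexes with Definition~\ref{def:MCS}~(5). Your additional verifications of triangularity, the grading, and clauses (4)--(5) are consistent with what the paper leaves implicit.
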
 

\subsubsection{Terminology and notation for the differentials $d_m$ in an MCS}
\label{rem:GTNotation}

Let $\mathcal{C} = \left( \{(C_m, d_m)\}, \{x_m\}, H \right) \in \sFMCS$ . The generators of the complexes $C_m$ correspond to the strands of $L$ at $x=x_m$. To describe the differentials $d_m$ it is natural to use terminology motivated by the case of an MCS, $\mathcal{C}(F, g)$, arising from a generating family.  For instance, to indicate that $\langle d_m e_i, e_j \rangle = 1$ we may say that the $i$-th and $j$-th strands are ``connected by a gradient trajectory'' at $x=x_m$. This is indicated pictorially in our figures by a dotted arrow at $x=x_m$ pointing from the $i$-th strand to the $j$-th strand.  See, for instance, Figure \ref{f:MCS-explosion}.

The requirements from Definition \ref{def:MCS}~(5) may be easily described using this pictorial description of the differentials $d_m$. When moving through a crossing, the dotted arrows are just pushed along the front diagram.  When passing a handleslide (in either direction) new gradient trajectories appear (or disappear if they already exist since we work mod $2$) along any strands that can be connected by a ``broken trajectory'' consisting of one part gradient trajectory and one part handleslide mark.

\subsection{MCS Equivalence}
\mylabel{sec:MCS-equiv}

\begin{figure}[t]
\labellist
\small\hair 2pt
\pinlabel {1} [bl] at 128 696
\pinlabel {2} [bl] at 478 696
\pinlabel {3} [bl] at 128 582
\pinlabel {4} [bl] at 478 582
\pinlabel {5} [bl] at 128 472
\pinlabel {6} [bl] at 478 472
\pinlabel {7} [bl] at 128 361
\pinlabel {8} [bl] at 478 361
\pinlabel {9} [bl] at 128 248
\pinlabel {10} [bl] at 472 248
\pinlabel {11} [bl] at 123 136
\pinlabel {12} [bl] at 472 136
\pinlabel {13} [bl] at 123 24
\pinlabel {14} [bl] at 472 24
\endlabellist
\centering
\includegraphics[scale=.4]{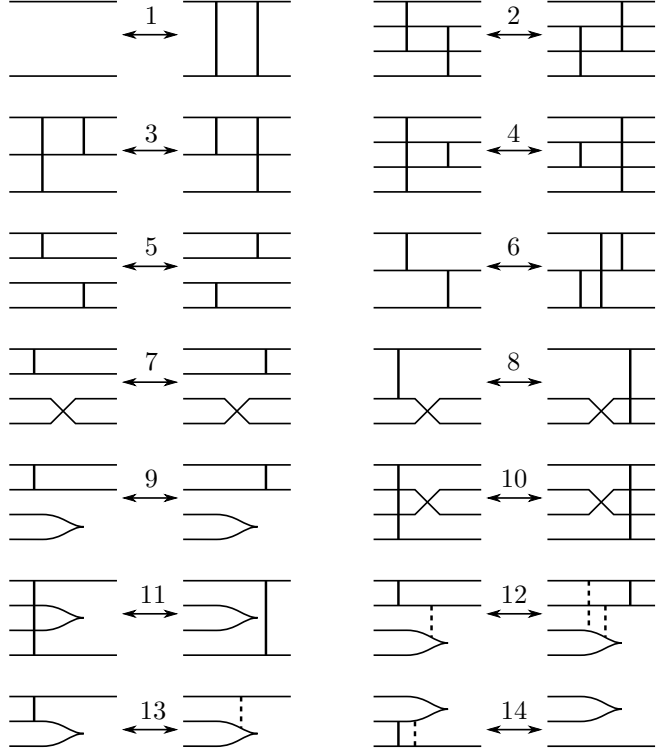}
\caption[MCS Moves.]{MCS moves 1-14.}
\label{f:MCS-equiv}
\end{figure}

 Each MCS $\sMCS \in \sFMCS$ is uniquely encoded by its marked front projection. There is a geometrically natural equivalence relation on $\sFMCS$ generated by a collection of local moves involving the implicit and explicit handleslide marks in $\sMCS$. The \emph{MCS moves} are described in Figures~\ref{f:MCS-equiv}~and~\ref{f:MCS-explosion}. Each move involves a portion of the front projection $\sfront$ containing at most one crossing or one cusp. Additional moves are created by reflecting moves 1-14 about a horizontal or vertical axis. We do not consider analogues of moves 1 - 6 for implicit handleslide marks, since the order of implicit handleslide marks at a birth or death is irrelevant. In moves 9, 11 and 12 there may be other implicit marks that the indicated explicit handleslide mark commutes past without incident. Additional implicit marks may also be located at the birth or death in moves 13 and 14.

\begin{figure}[t]
\labellist
\small\hair 2pt
\pinlabel {$u_1$} [br] at 10 105
\pinlabel {$u_2$} [br] at 10 80
\pinlabel {$i$} [br] at 10 55
\pinlabel {$j$} [br] at 10 30
\pinlabel {$v_1$} [br] at 10 5
\pinlabel {15} [bl] at 132 66
\endlabellist
\centering
\includegraphics[scale=.6]{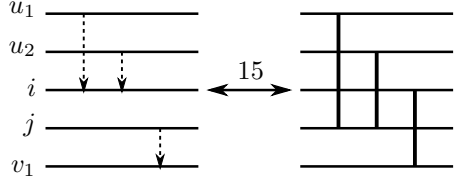}
\caption[MCS Moves.]{MCS move 15. The dotted arrows indicate $\langle d_p e_{u_1}, e_i \rangle = \langle d_p e_{u_2}, e_i \rangle = 1$ and $\langle d_p e_{j}, e_{v_1} \rangle = 1$.}
\label{f:MCS-explosion}
\end{figure}

MCS move 15 introduces or removes possibly many handleslide marks from an MCS. Suppose $\mathcal{C} = \left( \{(C_m, d_m)\}, \{x_m\}, H \right) \in \sFMCS$ and suppose for some $(C_p, d_p)$ there exists generators $e_i$ and $e_j$ with $i<j$ and $|e_i| = |e_j| -1$. Let $e_{u_1}, e_{u_2}, \hdots, e_{u_r}$ and $e_{v_1}, e_{v_2}, \hdots, e_{v_s}$ denote the generators of $C_p$ satisfying $\langle d_p e_{u_k}, e_i \rangle=1$ and $\langle d_p e_j, e_{v_k} \rangle=1$ respectively. It is straightforward to check that the composition of handleslide maps $h = h_{i, v_1} \circ \hdots \circ h_{i, v_s} \circ h_{u_1,j} \circ \hdots \circ h_{u_r, j}$ commutes with $d_p$. MCS move 15 introduces the handleslide marks corresponding to the maps in the composition; see Figure~\ref{f:MCS-explosion}. In particular, choose $y_0 = x_p < y_{1} < \hdots < y_{r+s} < x_{p+1}$ so that the tangle $T= \sfront \cap ([y_0, y_{r+s}] \times \rr)$ does not include singular points of $\sfront$. The new MCS $\sMCS'$ is created by introducing $r+s$ handleslide marks in $T$. For $0 < k \leq r$, place a handleslide mark in the tangle $\sfront \cap([y_{k-1}, y_k] \times \rr)$ between strands $u_k$ and $j$. For $r < k \leq r+s$, place a handleslide mark in the tangle $\sfront \cap([y_{k-1}, y_k] \times \rr)$ between strands $i$ and $v_{k-r}$. Since $d_p = h \circ d_p \circ h^{-1}$, the new marks, along with the existing marks and chain complexes of $\sMCS$, uniquely define an MCS $\sMCS'$. MCS move 15 also allows us to remove such sequences of handleslide marks when they exist. When $r+s=0$ no new handleslide marks are introduced and $\sMCS$ remains unchanged. We will not consider such a case to be an application of MCS move 15. 

Two MCSs $\sMCS$ and $\sMCS'$ in $\sFMCS$ are \emph{equivalent} if there exists a sequence $\sMCS = \sMCS_1, \sMCS_2, \hdots, \sMCS_n = \sMCS' \in \sFMCS$ such that for each $1 \leq i < n$, $\sMCS_i$ and $\sMCS_{i+1}$ are related by an MCS move. The set of equivalence classes $\sbMCS$ is denoted $\sFMCSeq$. Proposition 3.8 in \cite{Henry2011} shows that if we begin with an MCS, then the result of an MCS move is also an MCS and, hence, this equivalence is well-defined.

\begin{remark}
The equivalence relation on Morse complex sequences is motivated by considering possible changes to the MCS $\mathcal{C}(F,g)$ of a generating family $F$ arising from varying the choice of metric $g$; see \cite{Hatcher1973}. 
\end{remark}

\section{Associating a DGA to an MCS}
\mylabel{ch:DGA-to-MCS}

In the current section, we associate a DGA  $(\mathcal{A}_\mathcal{C}, d)$ to a Legendrian knot with chosen Morse complex sequence, $\mathcal{C}$, and give proofs of Theorems \ref{thm:d2is0} and \ref{thm:linear-iso}.  Our approach is a combinatorial analog of that described in Section 3.  In place of the spaces of difference flow trees, $\mathcal{M}^F(a; b_1, \ldots, b_n)$, we introduce sets of {\it chord paths},  $\mathcal{M}^{\mathcal{C}}(a; b_1, \ldots, b_n)$.  The elements of a given $\mathcal{M}^{\mathcal{C}}(a; b_1, \ldots, b_n)$ are finite sequences of vertical markers connecting strands of the front diagram of $L$ (``chords'') which advance along the front diagram from right to left subject to constraints dictated by $\mathcal{C}$.  Later, in Section 6 we give an alternate interpretation of chord paths as certain broken gradient trajectories of the difference function when $\mathcal{C} = \mathcal{C}(F,g)$ for some generating family $F$.  While the proofs in the present section are entirely combinatorial, we provide occasional remarks on how the combinatorics translate to this more geometric setting.

\subsection{Spaces of chord paths}
\label{sec:chord-paths}

In this section we will make use of the standing assumption that the front diagram of $L$ is nearly plat.  This assumption is not essential, but it simplifies the definition of the sets $\mathcal{M}^{\mathcal{C}}(a; b_1, \ldots, b_n)$ as extra possibilities need to be allowed for the behavior of chord paths near cusps with implicit handleslides.

We begin by introducing some preliminary terminology and notations. A {\it chord}, $\lambda$, on a front diagram $L$ will refer to a vertical segment connecting two strands of the front diagram.  A chord is uniquely specified by its $x$-coordinate and the two strands that it connects. We will write $\lambda = (x_0, [i,j])$ to indicate that $\lambda$ is placed at $x= x_0$ and has the $i$-th strand as its upper end point and $j$-th strand as its lower end point.  We use here our convention of enumerating the strands of $L$ at $x=x_0$ as $1,2, \ldots$ from \emph{top to bottom}, so $i<j$.    

Suppose $\mathcal{C}= \left( \{(C_m, d_m)\}, \{x_m\}, H \right)$ is an MCS for the Legendrian link $L$. Let $a$ (resp. $b$) be a crossing or right cusp (resp. crossing) of $L$.  

\begin{figure}[t]
\labellist
\small\hair 2pt
\pinlabel {$x_l$} [tr] at 57 5
\pinlabel {$k$} [tr] at 18 61
\pinlabel {$k+1$} [tr] at 18 36
\pinlabel {$a$} [tl] at 98 45
\pinlabel {$x_l$} [tl] at 158 5
\pinlabel {$k$} [tl] at 250 61
\pinlabel {$k+1$} [tl] at 250 36
\pinlabel {$a$} [tl] at 189 28
\endlabellist
\centering
\includegraphics[scale=.7]{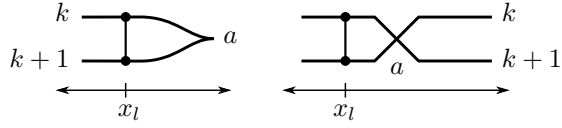}
\caption{The two possible originating chords in a chord path.}
\label{f:chord-path-right-cusp-crossing-axes}
\end{figure}

\begin{figure}[t]
\labellist
\small\hair 2pt
\pinlabel {$i$} [tr] at 9 85
\pinlabel {$k$} [tr] at 9 60
\pinlabel {$k+1$} [tr] at 9 35
\pinlabel {$x_p$} [tl] at 20 5
\pinlabel {$x_{p+1}$} [tl] at 90 5
\pinlabel {$k$} [tl] at 263 85
\pinlabel {$k+1$} [tl] at 263 60
\pinlabel {$j$} [tl] at 263 35
\pinlabel {$x_p$} [tl] at 163 5
\pinlabel {$x_{p+1}$} [tl] at 235 5
\endlabellist
\centering
\includegraphics[scale=.7]{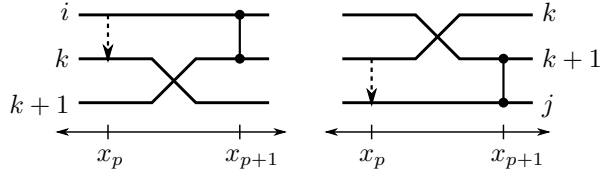}
\caption[]{Possible terminal chords in a chord path.}
\label{f:chord-path-end-crossings-axes}
\end{figure}

\begin{definition} \label{def:ChordPath}
 A {\it chord path} from $a$ to $b$ is a finite sequence of chords $\Lambda = (\lambda_1, \lambda_2, \ldots, \lambda_n)$, so that 
\begin{enumerate}

\item The $x$-values of the $\lambda_r$ are a subset of the $x$-values $ x_1 < x_2 < \ldots < x_s $ used in the MCS to decompose $L$ into elementary tangles.  Furthermore,  subject to this restriction, the $x$-value of $\lambda_{r+1}$ should be located immediately to the left of $\lambda_r$ for any $1 \leq r < n$.  

\item Assuming $a$ occurs between $x_l$ and $x_{l+1}$ and involves strands $k$ and $k+1$, we require $\lambda_1 = ( x_l, [k, k+1])$; see Figure~\ref{f:chord-path-right-cusp-crossing-axes}. We say $\lambda_1$ is the \emph{originating chord} of $\Lambda$.

\item Assuming $b$ lies between $x_{p}$ and $x_{p+1}$ and involves strands $k$ and $k+1$ we require either 

\begin{enumerate}

\item $\lambda_n = (x_{p+1}, [i, k])$  with $\langle d_p e_i, e_k \rangle = 1$, or

\item $\lambda_n = (x_{p+1}, [k+1, j])$  with $\langle d_p e_{k+1}, e_j \rangle = 1$.

\end{enumerate}
See Figure~\ref{f:chord-path-end-crossings-axes}. We say $\lambda_n$ is the \emph{terminating chord} of $\Lambda$.

\item Suppose $1 \leq r < n$, $\lambda_r = (x_{m+1}, [i,j])$, and the chord $\lambda_{r+1} = (x_{m}, [i',j'])$.  Then, $i'$ and $j'$ are required to satisfy some restrictions depending on the type of tangle appearing in the region $x_m \leq x \leq x_{m+1}$ as follows:
\end{enumerate}

\smallskip

\noindent (a) {\bf Crossing:} Assuming the crossing involves strands $k$ and $k+1$, we require $i' = \sigma(i), j' = \sigma(j)$ where $\sigma$ denotes the transposition $(k \,\, k+1)$.  Note that a chord of the form $\lambda_r = (x_{m+1}, [k,k+1])$ is not allowed to occur to the right of a crossing in a chord path.

\smallskip

\noindent (b) {\bf Left cusp:}  This cannot occur since $L$ is nearly plat.

\smallskip

\noindent (c) {\bf Right cusp:} Assuming the strands meeting at the cusp are labeled $k$ and $k+1$, we require $i' = \tau(i)$, $j' = \tau(j)$ where $\tau(l) = 
\begin{cases} 
l & \mbox{if $l < k$} \\
l+2 & \mbox{if $l \geq k$}.
\end{cases}$

\noindent (d) {\bf Handleslide:}  Suppose the handleslide occurs between strand $k$ and strand $l$ with $k < l$.  Here, $i' = i$ and $j' =j$ is always allowed.  In addition, if $i= k$ and $l < j$ we also allow $i'=l$ and $j' =j$.  Analogously, if $j=l$ and $i < k$ then we allow $i' =i$ and $j' = k$.  This rule can be viewed as allowing the endpoint of a chord to possibly jump along the handleslide.  Such a jump will shrink the length of the chord.

\end{definition}

\medskip

\begin{figure}[t]
\labellist
\small\hair 2pt
\pinlabel {$\lambda_{i+1}$} [tl] at 21 125
\pinlabel {$\lambda_{i}$} [tl] at 92 125
\endlabellist
\centering
\includegraphics[scale=.65]{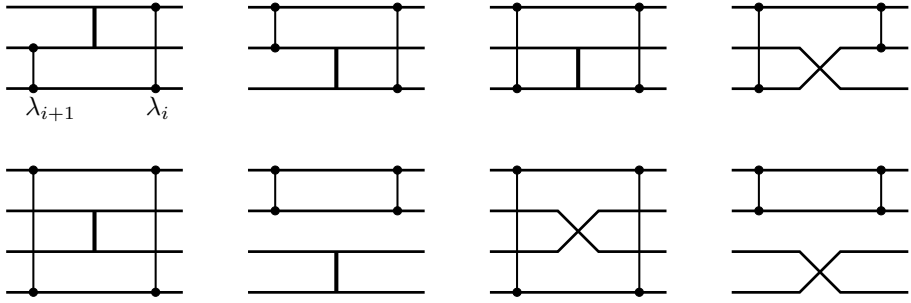}
\caption[]{Successive chords in a chord path near a crossing or handleslide. The chords in a chord path progress from right to left. This is indicated in the top left figure.}
\label{f:chord-path-handleslides-crossings}
\end{figure}

Figure~\ref{f:chord-path-handleslides-crossings} provides a pictorial summary of the appearance of successive chords in a chord path. We let $\mathcal{M}^{\mathcal{C}}(a; b)$ denote the set of all chord paths from $a$ to $b$.  

In order to obtain the higher order terms in the differential we need to allow additional behavior of chord paths near crossings.

\begin{definition} \label{def:ConvexCorners} A {\it chord path with convex corners} is a sequence of chords $\Lambda = (\lambda_1, \ldots, \lambda_n)$ satisfying all the requirements of a chord path except that if $\lambda_r$ and $\lambda_{r+1}$ have a crossing, $c$, between them involving strands $k$ and $k+1$ then we allow the following as an alternative to Definition~\ref{def:ChordPath}~(4a):

If $\lambda_r = (x_{m+1}, [i,k])$ we allow $\lambda_{r+1} = (x_{m}, [i,k])$.  Also, if $\lambda_r = (x_{m+1}, [k+1,j])$ we allow $\lambda_{r+1} = (x_{m}, [k+1,j])$; see Figure~\ref{f:chord-path-convex-corners}. If either of these alternatives occur, then we refer to $c$ as a \emph{convex corner} of $\Lambda$.

To a chord path with convex corners $\Lambda$ we assign a {\it word}, $w(\Lambda)$, in the following manner.  Following the chord path from right to left, read off all the convex corners which occur at the top of a chord as $f_1, \ldots, f_s$.  Now working backwards, from left to right, read off the convex corners which occur at the bottom of a chord as $h_1, \ldots, h_t$.  Finally, let $b$ denote the crossing where the chord path terminates, and put $w(\Lambda) = (f_1 \cdots f_s) b (h_1 \cdots h_t)$; see Figure~\ref{f:chord-path}.
\end{definition}

\begin{figure}[t]
\centering
\includegraphics[scale=.7]{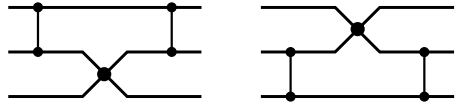}
\caption{Convex corners of a chord path at a crossing. Henceforth, the crossing of a convex corner will be marked with a dot.} 
\label{f:chord-path-convex-corners}
\end{figure}

\begin{figure}[t]
\labellist
\small\hair 2pt
\pinlabel {$a$} [tl] at 307 37
\pinlabel {$b$} [tl] at 51 40
\pinlabel {$f_1$} [tl] at 137 104
\pinlabel {$h_1$} [tl] at 247 15
\endlabellist
\centering
\includegraphics[scale=.7]{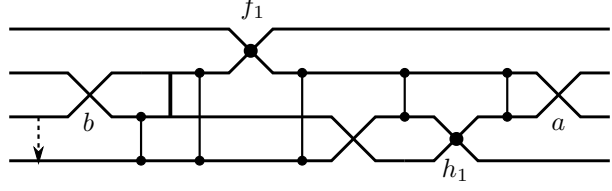}
\caption{A chord path $\Lambda$ with two convex corners and $w(\Lambda) = f_1 b h_1$.}
\label{f:chord-path}
\end{figure}

Now, let $\mathcal{M}^{\mathcal{C}}(a; b_1, \ldots, b_n)$ denote the set of all chord paths with convex corners beginning at $a$ and having $w(\Lambda) = b_1\cdots b_n$.  Recall that for front projections a Maslov potential is used to assign a degree to crossings and right cusps as in the construction of the Chekanov-Eliashberg algebra.  (See Section 2.)

\begin{proposition} \label{prop:CPDegree} 
If  $\mathcal{M}^{\mathcal{C}}(a; b_1, \ldots, b_n)$ is non-empty, then $|b_1\cdots b_n|= |a|-1$.
\end{proposition}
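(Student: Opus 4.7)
The plan is to associate a Maslov-theoretic weight to each chord and verify that this weight changes in a predictable way as one moves along a chord path, so that telescoping gives the grading identity. For a chord $\lambda = (x_0, [i,j])$, I would define $W(\lambda) = \mu^{(x_0)}_i - \mu^{(x_0)}_j$, where $\mu^{(x_0)}_l$ denotes the Maslov potential of the $l$-th strand (from top to bottom) at $x = x_0$. The three claims I would establish are: (i) $W(\lambda_1) = |a|$; (ii) $W(\lambda_n) = |b|+1$, where $b$ is the terminating crossing; and (iii) for each successive pair $\lambda_r, \lambda_{r+1}$, the difference $W(\lambda_{r+1}) - W(\lambda_r)$ equals $-|c|$ when the chord path has a convex corner at a crossing $c$ between them, and $0$ otherwise. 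Once these are in hand, telescoping yields $|b|+1-|a| = -\sum |c|$, which rearranges to $|a|-1 = |b| + \sum_i |f_i| + \sum_j |h_j| = |b_1 \cdots b_n|$.

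For claim (i), when $a$ is a crossing between strands $k$ and $k+1$, I would identify the smaller-slope strand $T$ in the formula $|a| = \mu(T) - \mu(B)$ with the strand occupying position $k$ just to the left of $a$ (the one going from top-left to bottom-right), which gives $W(\lambda_1) = |a|$ immediately. When $a$ is a right cusp, the Maslov relation at cusps (Figure~\ref{f:Maslov}) forces $\mu^{(x_l)}_k = \mu^{(x_l)}_{k+1} + 1$, so $W(\lambda_1) = 1 = |a|$. For claim (ii) (case (3a), with case (3b) symmetric), I would combine $\langle d_p e_i, e_k \rangle = 1$ with the fact that $d_p$ has degree $-1$ to get $\mu^{(p)}_i = \mu^{(p)}_k + 1$, and then use the identification $\mu^{(p+1)}_k = \mu^{(p)}_{k+1}$ (the physical strand at position $k$ just right of $b$ continues to position $k+1$ just left of $b$) to compute $W(\lambda_n) = \mu^{(p)}_k + 1 - \mu^{(p)}_{k+1} = |b| + 1$.

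For claim (iii) I would work through each case of Definitions~\ref{def:ChordPath}(4) and \ref{def:ConvexCorners}. Crossings without convex corners, right cusps, and handleslides all preserve $W$: in the first two any permutation of strand labels is matched by the corresponding swap of Maslov potentials between those strands, while for handleslides the two joined strands have equal Maslov potential (by the definition of a handleslide mark), so the allowed ``jump'' of a chord endpoint along a handleslide costs nothing. The one nontrivial case is a convex corner at a crossing $c$ between strands $k$ and $k+1$: here a chord endpoint remains at the same strand position across the crossing, but the underlying physical strand changes, producing the shift $W(\lambda_{r+1}) - W(\lambda_r) = \mu(B) - \mu(A) = -|c|$ (the same whether the corner is at the top or the bottom of the chord). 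The main obstacle I anticipate is simply pinning down this sign, i.e.\ verifying that the smaller-slope strand $T$ in the grading formula is indeed the one going from top-left to bottom-right; once that sign is correct, the telescope closes cleanly and the three claims combine to give $|b_1\cdots b_n| = |a|-1$.
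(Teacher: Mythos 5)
Your proposal is correct and follows essentially the same route as the paper: the quantity $W(\lambda)$ is exactly the paper's $\mu(\lambda)$, and the argument proceeds by the same three steps (initial value $|a|$, terminal value $|b|+1$, invariance except for a drop of $|c|$ at each convex corner) followed by telescoping. You merely spell out the case analysis and the sign check that the paper leaves implicit.
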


\begin{proof}
Given a chord $\lambda = (x_m, [i,j])$, let $\mu(\lambda)$ denote the difference of the value of the Maslov potential on the upper and lower endpoints of $\lambda$.  That is, $\mu(\lambda) = \mu(e_i) - \mu(e_j)$ at $x=x_m$. 

Now, suppose $\Lambda =  (\lambda_1, \lambda_2, \ldots, \lambda_n) \in \mathcal{M}^{\mathcal{C}}(a; b_1, \ldots, b_n)$.  For $1 \leq r < n$, a case by case analysis shows $\mu(\lambda_r) = \mu(\lambda_{r+1})$ unless there is a convex corner between $\lambda_r$ and $\lambda_{r+1}$.  In the latter case, supposing the convex corner occurs at the crossing $b_i$ we have $\mu(\lambda_{r+1}) = \mu(\lambda_r) - |b_i|$.  Now, suppose that $b_j$ is the crossing where the chord path terminates.  The remaining $b_i$ all appear as convex corners at some point in the chord path, so we see inductively that
\begin{equation}\label{eq:CPind}
\mu(\lambda_n) =  \mu(\lambda_1) - \sum_{\substack{1 \leq i \leq n \\ i \neq j}} |b_i|.
\end{equation}  

By definition $\mu(\lambda_1) = |a|$.   Furthermore, Definition~\ref{def:ChordPath}~(3) and the fact that the differentials in the complexes $(C_m, d_m)$ have degree $-1$ show that $\mu(\lambda_n)= |b_j| +1$.  Combining these two observations with equation~(\ref{eq:CPind}) completes the proof.

\end{proof}

\subsection{A DGA associated to an MCS}
\label{sec:MCS-DGA-defn}

We are now prepared to associate a DGA to an MCS $\mathcal{C}$.  Although the underlying algebra is identical to that of the Chekanov-Eliashberg DGA we use a distinct notation for clarity.  Let $A_{\mathcal{C}}$ denote the $\Z_2$ vector space generated by the set, $Q(L)$, of crossings and right cusps of $L$, and let $\mathcal{A}_\mathcal{C} = \bigoplus_{n=0}^\infty A_\mathcal{C}^{\otimes n}$ denote the corresponding tensor algebra.  The same degrees are assigned to generators as in the Chekanov-Eliashberg DGA following Definition~\ref{defn:Maslov}.
Next, we define a differential $d: \mathcal{A}_\mathcal{C} \rightarrow  \mathcal{A}_\mathcal{C}$ to have the form $\displaystyle d a = \sum_{n=1}^\infty d_n a $ on generators where
\[
d_n: A_{\mathcal{C}} \rightarrow A_{\mathcal{C}}^{\otimes n}, \quad d_n a = \sum \# \mathcal{M}^{\mathcal{C}}(a; b_1, \ldots, b_n) b_1 \cdots b_n.
\]
The sum ranges over all monomials $b_1\cdots b_n$ with $|b_1\cdots b_n| = |a| - 1$ and $\# \mathcal{M}^{\mathcal{C}}(a; b_1, \ldots, b_n)$ is the mod 2 count of the elements in $\mathcal{M}^{\mathcal{C}}(a; b_1, \ldots, b_n)$. The sums used to define $d a$  and $d_n a$ are finite because for a given $\mathcal{C}$ there are only finitely many chord paths.  

Our main results regarding this construction are the following.

\begin{theorem} \label{thm:d2is0}
If $\mathcal{C}$ is an MCS for a nearly plat position Legendrian knot $L$, then 
$(\mathcal{A}_\mathcal{C}, d)$  
is a DGA. That is, $d$ has degree $-1$, satisfies the Liebniz rule, and has $d^2 = 0$. 
\end{theorem}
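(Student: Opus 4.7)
The theorem has three parts. The claim that $d$ has degree $-1$ follows immediately from Proposition~\ref{prop:CPDegree}, which shows every chord path contributing to $da$ has word of degree $|a|-1$. The Leibniz rule holds by definition, since $d$ is defined on the generators of $A_\mathcal{C}$ and extended to $\mathcal{A}_\mathcal{C}$ multiplicatively. What remains is to prove $d^2 = 0$.

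Expanding $d^2 a$ via the Leibniz rule, one obtains a sum indexed by triples $(\Lambda, i, \Lambda')$, where $\Lambda \in \mathcal{M}^{\mathcal{C}}(a; b_1, \ldots, b_n)$, the index $1 \le i \le n$ selects a letter $b_i$ of $w(\Lambda)$, and $\Lambda' \in \mathcal{M}^{\mathcal{C}}(b_i; c_1, \ldots, c_m)$. Such a triple contributes the monomial $b_1 \cdots b_{i-1} c_1 \cdots c_m b_{i+1} \cdots b_n$. The plan is to show, for every fixed monomial $w$, that the set of triples contributing $w$ to $d^2 a$ carries a fixed-point-free involution, so the mod~$2$ count vanishes.

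To construct the involution I would view the concatenation $(\Lambda, \Lambda')$ as a \emph{broken chord path} from $a$ to the terminating generator of $\Lambda'$, with a distinguished break at $b_i$. Near $b_i$, the terminating chord of $\Lambda$ and the originating chord of $\Lambda'$ assemble into a local configuration of two chords on consecutive strands, subject to the constraint from Definition~\ref{def:ChordPath}~(3) that some entry $\langle d_p e_\bullet, e_\bullet \rangle$ equals~$1$. The involution would partner each such broken chord path with another broken chord path producing the same monomial $w$, obtained by sliding the break across an adjacent elementary tangle. At a crossing, the involution swaps whether the break is placed just before or just after the crossing; at an explicit handleslide, it exchanges whether the chord endpoint adjacent to the break jumps along the handleslide, invoking the fact from Definition~\ref{def:MCS}~(5)(d) that $h_{k,l}$ is a chain isomorphism; at a right cusp, it invokes the implicit handleslides described in Section~\ref{sec:MCS-graphic} together with the quotient identification in Definition~\ref{def:MCS}~(5)(c). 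The nearly plat hypothesis keeps cusps away from crossings and thereby simplifies the possible configurations near a cusp.

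The principal difficulty will be the combinatorial case analysis. The involution must be well-defined and fixed-point-free on the set of triples contributing to a given monomial $w$, which requires a careful enumeration: each elementary-tangle type (crossing, right cusp, explicit handleslide, implicit handleslide) must be examined in combination with each allowable local shape of the break (top versus bottom endpoint of the terminating chord of $\Lambda$, with or without adjacent convex corners in~$\Lambda$, across every admissible strand configuration in the MCS). Each individual case reduces to an algebraic identity already encoded in the MCS axioms of Definition~\ref{def:MCS}, but the bookkeeping needed to cover every case is extensive; this is the ``hands-on'' analysis promised in the outline.
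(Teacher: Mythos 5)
Your treatment of the degree and Leibniz properties matches the paper, and your identification of the contributing triples $(\Lambda, i, \Lambda')$ is the correct starting point. But the cancellation argument has a genuine gap in two places. First, ``sliding the break across an adjacent elementary tangle'' does not produce another broken chord path after a single step: gluing $\Lambda$ and $\Lambda'$ at the break yields a chord path containing one \emph{exceptional step} (a jump of a chord endpoint along an arrow of some differential $d_p$ that shortens the chord), and moving that step past one crossing or handleslide yields another such intermediate object, not a new broken configuration. The pairing of boundary terms is therefore not a local involution; it is induced by following the components of a graph whose $2$-valent vertices are chord paths with one exceptional step and whose $1$-valent vertices are the broken chord paths (this is Lemma~\ref{lem:MSL}). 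Establishing $2$-valence forces cases your sketch omits, notably at the terminal crossing (the paper's types ${\bf L}\mathrm{T1}$ and ${\bf L}\mathrm{T2}$), where the exceptional step cannot be slid further and must instead be paired with another exceptional step by using $d_p^2=0$ in the MCS complex to match up once-broken gradient trajectories --- a nonlocal algebraic input your ``adjacent tangle'' framing cannot see.

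Second, and more seriously, your local picture at the break --- ``the terminating chord of $\Lambda$ and the originating chord of $\Lambda'$'' --- only covers the case where $b_i$ is the terminal crossing of $\Lambda$. When $b_i$ is a convex corner, $\Lambda$ continues past $b_i$, and gluing in $\Lambda'$ produces an \emph{exceptional branching}: a chord that splits into two chords which then evolve independently, so the intermediate object is a tree rather than a path (Definition~\ref{def:1dMS}(2)). These branchings require their own catalogue of moves --- sliding the branch point past crossings, handleslides, and convex corners, and exchanging a branch point near a terminal crossing for an exceptional step near a convex corner --- none of which appears in your sketch. Without this second class of intermediate objects the higher-order terms of $d^2$ cannot be cancelled, so the argument as proposed only has a chance of proving $(d_1)^2=0$.
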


\begin{theorem} 
\label{thm:linear-iso}
Let $\sMCS^{+}$ and $\sMCS^{-}$ be MCSs for a nearly plat position Legendrian knot $L$.  If  $\sMCS^{+}$ and $\sMCS^{-}$ are equivalent, then the linearized complexes $(A_{\sMCS^{+}}, d^{+}_1)$ and $(A_{\sMCS^{-}}, d^{-}_1)$ are isomorphic.
\end{theorem}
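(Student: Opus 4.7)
The plan is to reduce the statement to the case of a single MCS move. Since MCS equivalence is generated by the fifteen moves of Figures~\ref{f:MCS-equiv} and \ref{f:MCS-explosion} (plus their reflections), it suffices to produce, for each move $\sMCS \to \sMCS'$, an explicit chain isomorphism $\phi : (A_{\sMCS}, d_1) \to (A_{\sMCS'}, d_1')$ and compose them. A key simplification is that the underlying $\zz_2$-vector space $A_{\sMCS} = A_{\sMCS'}$ is identical for all MCSs on $\sfront$, since it is freely generated by the crossings and right cusps of $\sfront$, none of which are moved by any MCS move.

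For each move I would carry out the following steps. First, identify the interval $[\alpha,\beta] \subset \rr_x$ in which $\sMCS$ and $\sMCS'$ differ. Because every move is local, any chord path that avoids this interval belongs to both $\mathcal{M}^{\sMCS}(a;b)$ and $\mathcal{M}^{\sMCS'}(a;b)$ with the same originating and terminating data, so only chord paths passing through $[\alpha,\beta]$ need to be analyzed. Second, enumerate the possible local ``partial chord paths'' (sub-chord-paths supported in $[\alpha,\beta]$) and exhibit a matching between the partial chord paths for $\sMCS$ and those for $\sMCS'$, allowing for $\zz_2$-cancellation. Third, define $\phi$ to be the identity on generators $a$ for which no correction is needed, and of the form $\phi(a) = a + \sum_i c_i$ otherwise, where the correction terms arise from short partial chord paths beginning at $a$ and entering $[\alpha,\beta]$ that have no corresponding partner. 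Fourth, verify $\phi d_1 = d_1' \phi$ by a case analysis of chord paths, grouping them according to how they traverse $[\alpha,\beta]$.

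For moves 1--6, which simply reorder handleslides or commute them past crossings without interacting with any gradient trajectory arrow, the chord paths are in obvious bijection and $\phi = \mathrm{id}$ works. Moves 7--12 involve a handleslide crossing a crossing of $\sfront$, and here the new chord paths that appear on one side of the move either cancel in pairs, by the triangularity of $d_p$ and the interaction rule in Definition~\ref{def:ChordPath}~(4d), or are matched by introducing a nontrivial identity correction. Moves 13 and 14 alter the implicit handleslide marks at a cusp; although implicit marks behave just like explicit ones in Definition~\ref{def:ChordPath}, the constraints imposed by Definition~\ref{def:MCS}~(5)~(b)--(c) force the chord path counts to agree, often again with $\phi = \mathrm{id}$.

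The main obstacle is move 15, which inserts or deletes an entire block of $r+s$ handleslide marks simultaneously (Figure~\ref{f:MCS-explosion}). Here a chord path in $\sMCS'$ may jump along any subset of the new marks, so the set of chord paths through $[\alpha,\beta]$ can be strictly larger than in $\sMCS$. The technical heart of the proof is to show that after fixing the entry and exit chords of the region, the extra chord paths in $\sMCS'$ organize themselves so that their $\zz_2$ count equals the count in $\sMCS$. This should follow by exploiting the relations $\langle d_p e_{u_k}, e_i\rangle = 1$ and $\langle d_p e_j, e_{v_k}\rangle = 1$ together with the fact that $h$ commutes with $d_p$, which is exactly what validated move 15 in the first place. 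I expect that this accounting can be packaged as a pairing on the set of chord paths whose fixed points are precisely the chord paths in $\sMCS$, giving the desired equality without any correction term in $\phi$.
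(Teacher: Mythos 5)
Your reduction to single moves, the observation that the underlying vector spaces coincide, and the general shape of the isomorphism (identity plus correction terms supported on chord paths entering the affected region) all match the paper's strategy, and your treatment of moves 1--6, 9--14 is essentially what the paper does. But there is a genuine gap at the two hard moves. For move 15 you assert that the extra chord paths in $\sMCS'$ should admit a fixed-point-free pairing so that ``the desired equality [holds] without any correction term in $\phi$.'' This is false: the inserted handleslide marks $h_{u_k,j}$ and $h_{i,v_k}$ give chord paths genuinely new ways to reach terminal crossings, and these contributions do not cancel mod $2$. The paper's isomorphism for move 15 is necessarily non-identity, $\Phi(q_i) = q_i + \sum_{j<i} \# \mathcal{M}^{0}(i;j)\, q_j$, where $\mathcal{M}^{0}(i;j)$ counts chord paths in an auxiliary object $\sMCS^{0}$ that jump along a single ``virtual'' handleslide $\alpha$ joining the strands $e_l, e_m$ with $|e_l| = |e_m| - 1$; the discrepancy $\# \mathcal{M}^{-}(i;j) - \# \mathcal{M}^{+}(i;j)$ is then shown to equal the commutator terms $\sum \# \mathcal{M}^{+}(i;l)\# \mathcal{M}^{0}(l;j) + \sum \# \mathcal{M}^{0}(i;l)\# \mathcal{M}^{-}(l;j)$. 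The same issue arises for move 8 (in the orientation depicted in Figure~\ref{f:MCS-equiv}), where $\Phi(q_i) = q_i + \# \mathcal{M}^{0}(i;k)\, q_k$ with $q_k$ the crossing in the move; your proposal leaves open whether moves 7--12 need corrections, but does not identify which one does or what the correction is.

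Beyond identifying the correct $\Phi$, verifying $\Phi \circ d_1^{+} = d_1^{-}\circ \Phi$ for moves 8 and 15 is not a routine case analysis of how chord paths traverse the region: it requires a gluing argument of the same kind as the proof that $d^2 = 0$. One builds a graph whose $2$-valent vertices are chord paths with a single exceptional step (satisfying the relevant terminal condition) and whose $1$-valent vertices are the exceptional chord paths in $\widetilde{\mathcal{M}}(i;j)$ together with the broken pairs in $\mathcal{M}^{\pm}(i;l)\times\mathcal{M}^{0}(l;j)$; the mod-$2$ identity then follows because a compact $1$-manifold has an even number of boundary points. Your proposal does not anticipate this machinery, and without it the chain-map identity for the corrected $\Phi$ is unproved.
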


The assumption that $L$ is nearly plat is not essential.  We make it to simplify the definition of chord paths and reduce the number of cases considered in the proofs below.

\subsection{Proof that $(d_1)^2 = 0$} 
\mylabel{sec:d2=0}

As a warm up, we show that in the linearized complex $(A_\mathcal{C}, d_1)$, $d_1 \circ d_1=0$. This case illustrates the nature of our argument and is shorter as chord paths with convex corners need not be considered.  

Let $a, b \in Q(L)$ with $|b| = |a| -2$.
The coefficient of $b$ in $(d_1)^2 a$ is given by
\begin{equation} \label{eq:dsq1}
\langle d^2 a , b \rangle = \# \bigcup_{|c| = |a| -1} \mathcal{M}^\mathcal{C}(a;c) \times  \mathcal{M}^\mathcal{C}(c;b).
\end{equation}
The right hand side will be shown to be $0$ mod $2$ via a combinatorial analog of the corresponding proof for moduli spaces of gradient flow lines.  The elements of $\# \bigcup_{|c| = |a| -1} \mathcal{M}^\mathcal{C}(a;c) \times  \mathcal{M}^\mathcal{C}(c;b)$ consist of once broken chord paths from $a$ to $b$, and in the following we simply refer to them as {\it broken chord paths}.

To begin we introduce a ``1-dimensional'' set of chord paths, $\mathcal{M}^\mathcal{C}(a;b)$.  

\begin{definition}
\label{defn:exceptional-step}
When $|b| = |a| -2$ we define a chord path $\Lambda \in \mathcal{M}^\mathcal{C}(a;b)$ to be a sequence of chords $(\lambda_1, \ldots, \lambda_n)$ as in Definition \ref{def:ChordPath} except that for a single value of $1 \leq r < n$, $\lambda_r$ and $\lambda_{r+1}$ violate Definition~\ref{def:ChordPath} (4) and instead satisfy 
\[
\lambda_r = (x_p, [i,j]) \quad \mbox{and} \quad \lambda_{r+1} = (x_p, [i', j'])
\]
with either
\begin{enumerate}
\item $i' = i$ and $\langle d_p e_{j'}, e_j \rangle = 1$, or
\item $j' = j$ and $\langle d_p e_{i}, e_{i'} \rangle = 1$.
\end{enumerate}
We will refer to the pair of chords $\lambda_r$ and $\lambda_{r+1}$ as the {\it exceptional step} of $\Lambda$; see Figure~\ref{f:exceptional-step}.
\end{definition}

\begin{figure}[t]
\labellist
\small\hair 2pt
\pinlabel {$i$} [tr] at 6 70
\pinlabel {$i'$} [tr] at 6 45
\pinlabel {$j=j'$} [tr] at 6 20
\pinlabel {$\lambda_r$} [tr] at 110 9
\pinlabel {$\lambda_{r+1}$} [tr] at 50 9

\pinlabel {$i=i'$} [tl] at 315 70
\pinlabel {$j'$} [tl] at 315 45
\pinlabel {$j$} [tl] at 315 20
\pinlabel {$\lambda_r$} [tr] at 307 9
\pinlabel {$\lambda_{r+1}$} [tr] at 250 9
\endlabellist
\centering
\includegraphics[scale=.7]{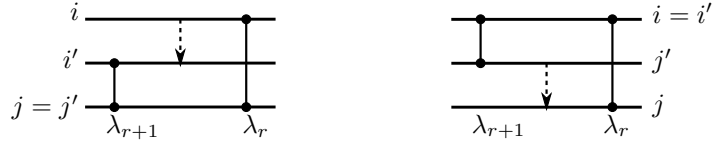}
\caption{The possible exceptional steps in a chord path $\Lambda \in \mathcal{M}^{\sMCS}(a;b)$ where $|b|=|a|-2$.}
\label{f:exceptional-step}
\end{figure}

\begin{remark}
\begin{enumerate}

\item[(i)] In the terminology of Section \ref{rem:GTNotation}, an exceptional step corresponds to an endpoint of the chord $\lambda_r$ jumping along a gradient trajectory in such a way that the length of the chord is decreased.

\item[(ii)]  Proposition \ref{prop:CPDegree} easily generalizes to show that if there is a chord path from $a$ to $b$ with a single exceptional step, then $|b| = |a| - 2$.

\item[(iii)] In the geometric setting of Section~\ref{ch:grad-staircase}, chord paths with a single exceptional step correspond to gradient staircases with one of the intermediate steps, $\varphi_m$, decreasing the index of fiber critical points by $1$.
\end{enumerate}
\end{remark}

\begin{lemma}[Main Structural Lemma] \label{lem:MSL}
Suppose $|b| = |a| - 2$.  Then, there exists a graph $G = (V, E)$ having vertex set
\[ \displaystyle
V = \mathcal{M}^\mathcal{C}(a; b) \cup \left( \bigcup_{|c| = |a| -1} \mathcal{M}^\mathcal{C}(a;c) \times  \mathcal{M}^\mathcal{C}(c;b) \right)
\]
and satisfying the properties
\begin{enumerate}
\item  Each vertex in $\mathcal{M}^\mathcal{C}(a; b)$ is $2$-valent.

\item  Each vertex in $\bigcup_{|c| = |a| -1} \mathcal{M}^\mathcal{C}(a;c) \times  \mathcal{M}^\mathcal{C}(c;b)$ is $1$-valent.

\end{enumerate}
\end{lemma}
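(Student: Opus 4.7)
The plan is to view $G$ as a combinatorial model of a compact one-dimensional manifold with boundary: chord paths with exceptional step play the role of interior points, broken chord paths play the role of boundary points, and the edges of $G$ encode which pairs of configurations lie close to one another. If such a graph is built with the stated valences, then the handshake lemma forces the right-hand side of equation~(\ref{eq:dsq1}) to be even, establishing $\langle (d_1)^2 a, b \rangle = 0$. The construction thus reduces to describing the edges and verifying, in a case analysis, the valence of each vertex type.

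The edges incident to a broken chord path $(\Lambda^+, \Lambda^-) \in \mathcal{M}^\mathcal{C}(a;c) \times \mathcal{M}^\mathcal{C}(c;b)$ are handled first. By Definition~\ref{def:ChordPath}~(3), the terminating chord of $\Lambda^+$ at the crossing $c$ and the originating chord of $\Lambda^-$ at $c$ are related by a jump along a gradient trajectory of the form $\langle d_p e_i, e_k \rangle = 1$ (or its symmetric analog), which is precisely the data of an exceptional step as in Definition~\ref{defn:exceptional-step}. Concatenating the two sequences and recording this transition produces a unique chord path in $\mathcal{M}^\mathcal{C}(a;b)$ with a single exceptional step, and I would declare this to be the one edge incident to $(\Lambda^+, \Lambda^-)$. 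Conversely, given $\Lambda \in \mathcal{M}^\mathcal{C}(a;b)$ with exceptional step between $\lambda_r$ and $\lambda_{r+1}$, two local modifications are naturally available: one that slides the exceptional step past the tangle immediately to the right (reorganizing the pair $\lambda_{r-1}, \lambda_r$), and one that slides it past the tangle immediately to the left (reorganizing $\lambda_{r+1}, \lambda_{r+2}$). A tangle-by-tangle check should show that each slide produces exactly one valid neighbor, either another chord path with exceptional step or a broken chord path of the kind obtained by the gluing above, giving $\Lambda$ exactly two incident edges.

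The verification is carried out by running through the possible tangles adjacent to the exceptional step: crossings (with and without a convex corner present at them), right cusps, explicit handleslides, and implicit handleslides. The crossing and right cusp subcases reduce directly to Definition~\ref{def:ChordPath}~(4) and should go through without surprise. The hard part will be the handleslide subcases: moving an exceptional step past a handleslide requires keeping track of how the gradient trajectory implementing the exceptional step is altered by the passage through the handleslide, as described in Section~\ref{rem:GTNotation}, and several subcases arise depending on which endpoints of the handleslide, the chord, and the gradient trajectory coincide. To confirm that these subcases organize into exactly two outgoing edges from each vertex of $\mathcal{M}^\mathcal{C}(a;b)$, I would invoke the identities $d_m \circ d_m = 0$ within each complex $(C_m, d_m)$, together with the triangular form of the $d_m$ and the acyclicity recorded in Remark~\ref{rem:mcs-graphic}, to match up the various broken-trajectory configurations produced by a single handleslide in pairs modulo $2$.
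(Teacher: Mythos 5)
Your overall strategy coincides with the paper's: the same vertex set, the same gluing of a broken chord path at $c$ into a chord path whose exceptional step sits immediately to the left of $c$ (giving $1$-valence at boundary vertices), and a left/right analysis of the tangle adjacent to the exceptional step (giving $2$-valence at interior vertices), with parity then following from the classification of compact $1$-manifolds. The one place where your plan would fail as stated is the assertion that each of the two edges at an interior vertex is produced by \emph{sliding the exceptional step past} the adjacent tangle. In the paper's construction several edges do not move the exceptional step to the other side of anything: near the terminal crossing the edge either swaps which end of the chord jumps (type ${\bf L}\mathrm{T}1$) or pairs up the once-broken trajectories from $e_i$ to $e_{k+1}$ furnished by $d_{p+1}^2=0$ (type ${\bf L}\mathrm{T}2$, which moreover requires \emph{fixing a choice} of pairing once and for all, since $d_{p+1}^2=0$ only guarantees the set of such broken trajectories has even cardinality); and near a handleslide the edges ${\bf L}\mathrm{H}2 \leftrightarrow {\bf L}\mathrm{H}5$ and ${\bf R}\mathrm{H}3 \leftrightarrow {\bf R}\mathrm{H}6$ join two chord paths whose exceptional steps lie on the \emph{same} side of the handleslide, differing only in whether the chord jumps along the handleslide and in which gradient trajectory realizes the step. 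Your closing remark about matching broken-trajectory configurations in pairs via $d_m^2=0$ is the right instinct for the terminal-crossing case, but the organizing principle you actually need is the paper's: assign to each exceptional step both a left type and a right type, produce exactly one edge per type, and allow an edge to connect two configurations on the same side of a tangle whenever the trajectory constituting the exceptional step cannot be pushed across (because it ceases to exist on the other side, or because pushing it across would run the chord into the crossing). With that amendment your case analysis closes up correctly.
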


Let us pause to describe how Lemma \ref{lem:MSL} completes the proof that $(d_1)^2 = 0$.  Since $V$ is finite, it follows that $G$ is a compact $1$-dimensional manifold with boundary consisting precisely of the vertices in $\bigcup_{|c| = |a| -1} \mathcal{M}^\mathcal{C}(a;c) \times  \mathcal{M}^\mathcal{C}(c;b)$.  Such a manifold is a disjoint union of circles and closed intervals, so it follows that equation~(\ref{eq:dsq1}) is  $0$ modulo $2$.


\subsection{Proof of Lemma \ref{lem:MSL}}

We need to introduce an edge set $E$ for $G$ with the desired properties.  Edges will be determined by analyzing the way the exceptional step of a chord path may be moved around in the front projection of $\sfront$.  For vertices in $\mathcal{M}^\mathcal{C}(a; b)$, the exceptional step is assigned both a  ``left type'' and a ``right type'' depending on the tangle appearing  to the left or, in the latter case, right of the exceptional step and the way in which the chord path passes through this tangle.  An exhaustive list of types for exceptional steps is provided below.  Each type is given an abbreviation reflecting the particular tangle adjacent to the exceptional step which is prefixed with a bold-faced ${\bf L}$ or ${\bf R}$ to indicate whether the tangle appears to the left or right of the exceptional step.  To give a flavor for the notation, an exceptional step situated with a crossing to its left and a handleslide to its right could, for instance, have left type `${\bf L}\mathrm{C1}$' and right type `${\bf R}\mathrm{H3}$'.

The assignment of edges to vertices in $\mathcal{M}^\mathcal{C}(a; b)$ is based on the type of the exceptional step, and is done in such a way that each type provides a single edge.  Since an exceptional step has both a left type and a right type, (1) of Lemma \ref{lem:MSL} will follow.  In addition, (2) will follow as vertices in $\bigcup_{|c| = |a| -1} \mathcal{M}^\mathcal{C}(a;c) \times  \mathcal{M}^\mathcal{C}(c;b)$ will be assigned a single chord path in $\mathcal{M}^\mathcal{C}(a; b)$ by ``gluing'' the two chord paths together near $c$.  

\subsubsection{Description of the edge set $E$}

First, we assign edges to the $1$-valent vertices.  Given 
\[
({\bf \alpha}, {\bf \beta})= \left((\alpha_1, \ldots, \alpha_{n_1}) , (\beta_1, \ldots, \beta_{n_2}) \right) \in  \mathcal{M}^\mathcal{C}(a;c) \times  \mathcal{M}^\mathcal{C}(c;b)
\]
we introduce an edge whose other end lies on the chord path in $\mathcal{M}^\mathcal{C}(a; b)$ obtained from gluing ${\bf \alpha}$ and ${\bf \beta}$ as
\[
(\alpha_1, \ldots, \alpha_{n_1}, \lambda, \beta_1, \ldots, \beta_{n_2}).
\]
The additional chord $\lambda$ is obtained from pushing $\alpha_{n_1}$ through the crossing $c$, with an exceptional step occurring between $\lambda$ and $\beta_1$; see Figure~\ref{f:edge-RC2-broken-chord-paths}.

\begin{figure}[t]
\labellist
\small\hair 2pt
\pinlabel {$\beta_1$} [tl] at 50 9
\pinlabel {$\alpha_n$} [tl] at 235 9
\pinlabel {$\beta_1$} [tl] at 344 9
\pinlabel {$\lambda$} [tl] at 372 9
\pinlabel {$\alpha_n$} [tl] at 425 9
\endlabellist
\centering
\includegraphics[scale=.6]{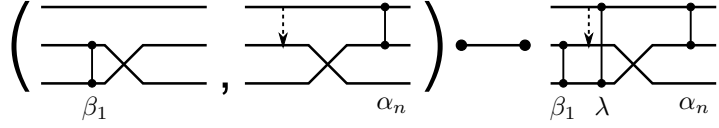}
\caption[]{A broken chord path in $\mathcal{M}^{\sMCS}(a;c) \times \mathcal{M}^{\sMCS}(c;b)$ is connected by an edge to a chord path in $\mathcal{M}^{\sMCS}(a;b)$ with an exceptional step of type RC2. The edge is represented by the horizontal segment in the middle. Later figures represent the edge similarly.}
\label{f:edge-RC2-broken-chord-paths}
\end{figure}

\subsubsection{Classification of exceptional steps into left and right types}

Often the type of a given exceptional step is based on the nature of the differentials in the complexes $(C_m, d_m)$ near the exceptional step.   In particular, for a pair of strands corresponding to generators $e_i, e_j$ in $C_m$ we will frequently need to know whether $\langle d_m e_i, e_j \rangle = 0$ or $1$, i.e. whether strands $i$ and $j$ are connected by a gradient trajectory at $x=x_m$.  (The reader should recall the terminology from Section \ref{rem:GTNotation} which we will now use frequently.)

\medskip

Near {\bf crossings} other than the terminal crossing $b$:

\medskip

\noindent Type $\mathrm{C}1$:  We say that an exceptional step near a crossing is of Type ${\bf L}\mathrm{C}1$ or ${\bf R}\mathrm{C}1$   if there is another chord path in $\mathcal{M}^\mathcal{C}(a; b)$ obtained by moving the exceptional step to the other side of the crossing.  In this case we include an edge in $E$ between these two chord paths; see Figure~\ref{f:edge-LC1-RC1}.  

 As in Section \ref{rem:GTNotation} above, there is a bijection between gradient trajectories before and after the crossing, so the only thing that can prevent us from moving the exceptional step to the other side of the crossing is if the relevant gradient trajectory no longer consists of an interval contained within the chord.  This can happen only if the non-jumping end point of the chord and the end of the gradient trajectory lie on the two crossing strands.  This situation  is impossible if the crossing is to the left of the exceptional step as it would produce a chord running directly into the crossing.  However, it can occur when the crossing is located to the right of the exceptional step, and we say the exceptional step has right type ${\bf R}\mathrm{ C2}$; see Figure~\ref{f:edge-RC2-broken-chord-paths}.  Note that ${\bf R}\mathrm{ C2}$ exceptional steps arise as precisely the vertices connected to broken chord paths in $\mathcal{M}^\mathcal{C}(a;c) \times  \mathcal{M}^\mathcal{C}(c;b)$ by the edges described above.  

In summary, edges in $E$ are assigned to vertices with exceptional steps adjacent to a crossing as follows:
\[
{\bf L}\mathrm{ C1} \leftrightarrow {\bf R}\mathrm{ C1} \quad \mbox{and} \quad {\bf R}\mathrm{ C2} \leftrightarrow \mbox{Broken chord paths}.
\]
See Figures~\ref{f:edge-RC2-broken-chord-paths}~and~\ref{f:edge-LC1-RC1}.

\begin{figure}[t]
\labellist
\small\hair 2pt
\pinlabel {{\bf L}C1} [tl] at 48 6
\pinlabel {{\bf R}C1} [tl] at 208 6
\pinlabel {{\bf L}C1} [tl] at 352 6
\pinlabel {{\bf R}C1} [tl] at 510 6
\endlabellist
\centering
\includegraphics[scale=.6]{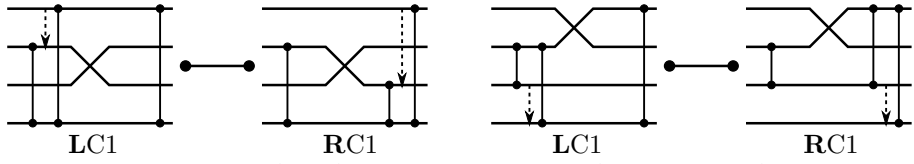}
\caption[]{Edges between vertices with exceptional steps near a crossing.}
\label{f:edge-LC1-RC1}
\end{figure}
 
Near the {\bf Terminal Crossing} $b$: Assuming the exceptional step occurs right before the chord path reaches the crossing $b$, there are two cases to consider.

\begin{figure}[t]
\labellist
\small\hair 2pt
\pinlabel {{\bf L}T1} [tl] at 46 12
\pinlabel {{\bf L}T1} [tl] at 206 12
\pinlabel {(a)} [tl] at 133 12
\pinlabel {{\bf L}T2} [tl] at 348 12
\pinlabel {{\bf L}T2} [tl] at 507 12
\pinlabel {(b)} [tl] at 425 12
\endlabellist
\centering
\includegraphics[scale=.6]{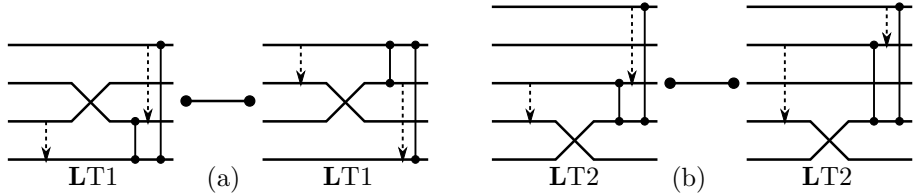}
\caption[]{Edges between vertices with exceptional steps near the terminal crossing.}
\label{f:edge-LT1-LT2}
\end{figure}

\medskip

\noindent Type ${\bf L}\mathrm{T1}$:  The end point of the chord that jumps ends up on one of the crossing strands; see Figure~\ref{f:edge-LT1-LT2}~(a).  According to the requirement (3) from Definition \ref{def:ChordPath} there is a gradient trajectory to the left of the crossing sharing the same non-crossing strand end as the chord.  There then arises another chord path of this type obtained by altering which end of the chord jumps at the exceptional step.

\medskip

\noindent Type ${\bf L}\mathrm{T2}$:  The chord end point which jumps along the exceptional step does not land on either of the crossing strands; see Figure~\ref{f:edge-LT1-LT2}~(b).  Suppose $b$ is located between $x=x_p$ and $x=x_{p+1}$ and involves strands $k$ and $k+1$.  Then, the gradient trajectory constituting the exceptional step combined with the gradient trajectory guaranteed by Definition \ref{def:ChordPath}~(3) provide a broken trajectory at $x= x_{p+1}$ from some $e_i$ to $e_{k+1}$ (or from $e_k$ to some $e_j$).  We assign edges by choosing some division of such broken trajectories into pairs.

In more detail, to provide edges in this case we need to make use of the fact that in the MCS $\mathcal{C}$ the complex $(C_{p+1}, d_{p+1})$ has $d_{p+1}^2=0$.  This implies that for any $i$ (resp. any $j$) there is an even number of once broken gradient trajectories from $i$ to $k+1$  (resp. from $k$ to $j$), so for each $i$ (resp. $j$) we can divide the set of such broken trajectories into pairs.  We assume that such a choice has been fixed once and for all, and this provides edges between pairs of  vertices of type ${\bf L}\mathrm{T2}$.  As a passing remark, in the geometric setting where $\mathcal{C} = \mathcal{C}(F, g)$, the broken trajectories naturally occur in pairs as the boundary points of the compactification of $\mathcal{M}(f_{x_{p+1}}; e_i, e_{k+1})$. 

In summary, we have just described edges in $E$ which connect vertices of type ${\bf L}T1$ and ${\bf L}T2$ with distinct vertices of the same type.  Each vertex occurs as the endpoint of exactly one such edge. 

\medskip

Near a {\bf Handleslide}:  We first classify the exceptional steps into six separate types and then indicate which types are connected by edges in $E$.  The impatient reader can jump to the pictorial presentation given in Figure~\ref{f:edge-H1-to-H6}.

As discussed in Section \ref{rem:GTNotation} above, from Definition \ref{def:MCS}~(5d) it follows that strands which can be connected by a broken trajectory consisting of the handleslide mark and an ordinary gradient trajectory will be connected by a gradient trajectory  on exactly one side of the handleslide.  Furthermore, these are the only pairs of strands where the existence of a gradient trajectory is altered by the handleslide.  More formally, if the handleslide lies between $x=x_m$ and $x= x_{m+1}$ and connects strands $k$ and $l$ then $\langle d_m e_i, e_j \rangle \neq \langle d_{m+1} e_i, e_j \rangle$ if and only if $e_i= e_k$ and $1 = \langle d_m e_l, e_j\rangle = \langle d_{m+1} e_l, e_j \rangle$ or $e_j = e_l$ and $1 = \langle d_m e_i, e_k\rangle = \langle d_{m+1} e_i, e_k \rangle$.

For classifying exceptional steps, an obvious criterion to consider is whether or not the chord path jumps along the handleslide.  Either answer is refined into three separate cases. In the first three types we suppose the chord path does not jump along the handleslide.
 
\medskip

\noindent Type $\mathrm{H}1$: The chord path does not jump along the handleslide and the gradient trajectory involved in the exceptional step exists on both sides of the handleslide.

\medskip

If the gradient trajectory involved in the exceptional step ceases to exist on the other side of the handleslide mark it is due to the existence of a broken trajectory consisting of one part gradient trajectory and one part handleslide.  We refine this case into two cases depending on the order that these portions of the broken trajectory occur.  View the chord path as moving from right to left.

\medskip

\noindent Type $\mathrm{H}2$:  At the exceptional step, the jumping end of the chord passes the gradient trajectory portion and then the handleslide mark.

\medskip

\noindent Type $\mathrm{H}3$:  The jump along the exceptional step passes the handleslide first and then  the gradient trajectory portion.

\medskip

For the final three types we make the assumption that the chord path does jump along the handleslide.  Note that this implies the gradient trajectory involved in the exceptional step will exist on both sides of the handleslide mark.  Indeed, in order for the chord path to jump along both the exceptional step and the handleslide the two markings must lie on non-overlapping vertical intervals.

\medskip

\noindent Type $\mathrm{H}4$:  The jump along the handleslide and the jump at the exceptional step occur at two different ends of the chord.

\medskip

If the jump along the exceptional step and the handleslide occur at a common end of the chords $\lambda_r$, then they together constitute a broken trajectory.  Therefore, a gradient trajectory running along their combined vertical interval appears on exactly one side of the handleslide.

\medskip

\noindent Type $\mathrm{H}5$:  This trajectory exists to the \emph{right} of the handleslide.

\medskip

\noindent Type $\mathrm{H}6$:  This trajectory exists to the \emph{left} of the handleslide.

\medskip

As usual each handleslide type will be prefixed with ${\bf L}$ or ${\bf R}$ to indicate whether the handleslide lies to the left or to the right of the exceptional step.

Edges are assigned as follows between chord paths which are identical away from the handleslide:

\begin{align}
{\bf L}\mathrm{H}1 & \leftrightarrow {\bf R}\mathrm{H}1, \,\, & {\bf L}\mathrm{H}2 & \leftrightarrow {\bf L}\mathrm{H}5, \\ \nonumber
{\bf L}\mathrm{H}3 & \leftrightarrow {\bf R}\mathrm{H}5, \,\, & {\bf L}\mathrm{H}4 & \leftrightarrow {\bf R}\mathrm{H}4, \\ \nonumber
{\bf L}\mathrm{H}6 & \leftrightarrow {\bf R}\mathrm{H}2, \,\, & {\bf R}\mathrm{H}3 & \leftrightarrow {\bf R}\mathrm{H}6.\nonumber
\end{align}

See Figure~\ref{f:edge-H1-to-H6}. 

This completes the description of the edge set.  Each element of $\mathcal{M}^\mathcal{C}(a,b)$ has a unique left and right type, and a single edge assignment has been given based on each possible type.  Thus, condition (1) in the statement of Lemma \ref{lem:MSL} holds.  Condition (2) follows as a single edge was assigned to each ``broken chord path'' in  $\bigcup_{|c| = |a| -1} \mathcal{M}^\mathcal{C}(a;c) \times  \mathcal{M}^\mathcal{C}(c;b)$.  
\begin{flushright}
$\Box$
\end{flushright}


\begin{figure}[t]
\labellist
\small\hair 2pt
\pinlabel {{\bf L}H1} [tl] at 40 238
\pinlabel {{\bf R}H1} [tl] at 200 238
\pinlabel {{\bf L}H2} [tl] at 360 238
\pinlabel {{\bf L}H5} [tl] at 520 238

\pinlabel {{\bf L}H3} [tl] at 40 124
\pinlabel {{\bf R}H5} [tl] at 200 124
\pinlabel {{\bf L}H4} [tl] at 360 124
\pinlabel {{\bf R}H4} [tl] at 520 124

\pinlabel {{\bf L}H6} [tl] at 40 9
\pinlabel {{\bf R}H2} [tl] at 200 9
\pinlabel {{\bf R}H3} [tl] at 360 9
\pinlabel {{\bf R}H6} [tl] at 520 9
\endlabellist
\centering
\includegraphics[scale=.55]{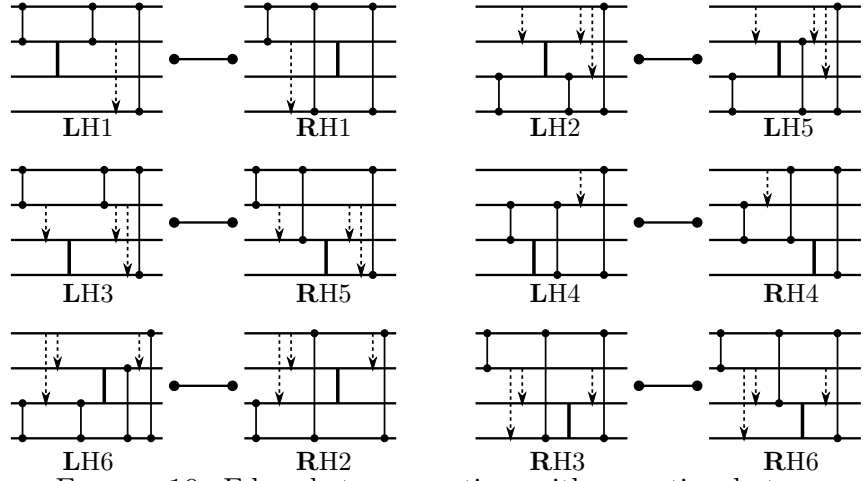}
\caption[]{Edges between vertices with exceptional steps near a handleslide.}
\label{f:edge-H1-to-H6}
\end{figure}

\subsection{Proof of Theorem \ref{thm:d2is0}}

To see that the full differential has $d^2 = 0$, we need to show that for any $a, b_1, \ldots, b_n \in Q(L)$ with $|b_1\cdots b_n| = |a| -2$, $\langle d^2 a, b_1\cdots b_n \rangle = 0$.  From the Liebniz rule, we have
\[
\langle d^2 a , b_1 \cdots b_n \rangle = \# \bigcup_{|c| = |b_i\cdots b_{j}| +1}   \mathcal{M}^\mathcal{C}(a; b_1, \ldots, b_{i-1}, c, b_{j+1}, b_n) \times \mathcal{M}^\mathcal{C}(c; b_i, b_{i+1}, \ldots, b_{j}).
\]
We use the same strategy as in the previous subsection to show the right hand side is $0$ mod $2$.  In this case, the ``$1$-dimensional moduli space'' $\mathcal{M}^\mathcal{C}(a; b_1, \ldots, b_{n})$ has a slightly more elaborate description.  Along with moving the exceptional step around there is a possibility of varying the location of a branch point in a difference flow tree.  The combinatorial description follows.

\begin{figure}[t]
\labellist
\small\hair 2pt
\pinlabel {$i$} [tr] at 7 67
\pinlabel {$l$} [tr] at 7 44
\pinlabel {$j$} [tr] at 7 19
\pinlabel {$\lambda_r$} [tr] at 110 6
\pinlabel {$\beta_1$} [tr] at 53 6
\pinlabel {$\alpha_1$} [tr] at 35 6
\endlabellist
\centering
\includegraphics[scale=.7]{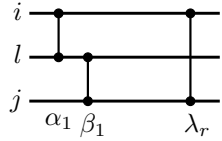}
\caption{Exceptional branching in a chord path $\Lambda \in \mathcal{M}^{\sMCS}(a;b_1, \hdots, b_n)$.}
\label{f:exceptional-branching}
\end{figure}

\begin{definition} \label{def:1dMS} When $|b_1\cdots b_n| = |a| -2$, the elements of $\mathcal{M}^\mathcal{C}(a; b_1, \ldots, b_{n})$ are chord paths possibly with convex corners with word $b_1\cdots b_n$ except that exactly one of the two following exceptional features occurs

\begin{enumerate}
\item A single exceptional step, or

\item At some $x= x_p$ the chord $\lambda_r = (x_p, [i,j])$ splits into two chords $\alpha_1$ and $\beta_1$ so that for some $i< l < j$,
$\alpha_1 = (x_p, [i,l])$ and $\beta_1 = (x_p, [l,j])$.  Subsequently, $\alpha_1$ and $\beta_1$ are extended to chord paths with convex corners $A= (\alpha_1, \ldots, \alpha_{M_1})$ and $B = (\beta_1, \ldots, \beta_{M_2})$ satisfying all of the requirements of Definition \ref{def:ChordPath} except for (2).  We refer to the part of the chord path where $\lambda_r$ splits into $\alpha_1$ and $\beta_1$ as the {\it exceptional branching}; see Figure~\ref{f:exceptional-branching}.

\end{enumerate}

In the second case, the associated word $w(\Lambda)$ is defined as the following product: Let $f_1, \ldots, f_s$ (resp. $h_1, \ldots, h_t$) denote the convex corners occurring along the top (resp. bottom) ends of the $\lambda_i$ read from right to left (resp. left to right).  Then,
\[
w(\Lambda) = (f_1\cdots f_s) w(A)\cdot w(B) (h_1\cdots h_t).
\]
\end{definition} 

The proof that $d^2 =0$ is completed by an analog of Lemma \ref{lem:MSL} with the vertex set now consisting of the union of $\mathcal{M}^\mathcal{C}(a; b_1, \ldots, b_{n})$ and $$\bigcup_{|c| = |b_i\cdots b_{j}| +1}   \mathcal{M}^\mathcal{C}(a; b_1, \ldots, b_{i-1}, c, b_{j+1}, b_n) \times \mathcal{M}^\mathcal{C}(c; b_i, b_{i+1}, \ldots, b_{j})$$ with vertices from the former subset $2$-valent and vertices from the latter subset $1$-valent.

We keep all edges arising from exceptional steps positioned near crossings and handleslides as in the proof of Lemma \ref{lem:MSL}.  In addition, we now describe edges arising from

\begin{enumerate}

\item Broken chord paths in $\mathcal{M}^\mathcal{C}(a; b_1, \ldots, b_{i-1}, c, b_{j+1}, b_n) \times \mathcal{M}^\mathcal{C}(c; b_i, b_{i+1}, \ldots, b_{j})$ with $c$ a convex corner.

\item Exceptional branchings.

\item Exceptional steps positioned near convex corners.

\end{enumerate}

For the first case, notice that such a pair of chord paths $(\Lambda_1, \Lambda_2)$ may be ``glued'' together so that an exceptional branching occurs immediately to the left of $c$.  In the notation of Definition \ref{def:1dMS}, $A$ and $B$ (resp. $B$ and $A$) will consist of $\Lambda_2$ and the tail end of $\Lambda_1$ if the convex corner of $\Lambda_1$ at $c$ occurs at the upper end point (resp. lower end point) of the chord.  Note, that in either case the word of the glued chord path is indeed $b_1\cdots b_n$.  See Figure~\ref{f:edge-RBC2-broken-chord-paths-and-LBC2}.

\subsubsection{Edges arising from exceptional branchings.}

\bigskip

\noindent Near {\bf crossings}:

\medskip

\begin{figure}[t]
\labellist
\small\hair 2pt
\pinlabel {{\bf R}BC2} [tl] at 365 9
\pinlabel {{\bf L}BC2} [tl] at 510 9
\pinlabel {} [tl] at 80 9
\endlabellist
\centering
\includegraphics[scale=.55]{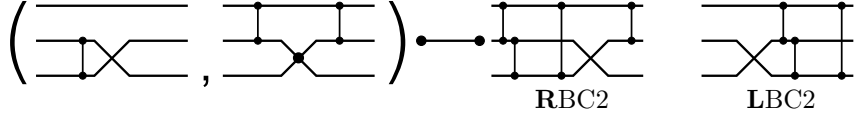}
\caption[]{A chord path with an exceptional branching of type RBC2 arises from gluing at a convex corner. A chord path with exceptional branching of type LBC2 cannot occur.}
\label{f:edge-RBC2-broken-chord-paths-and-LBC2}
\end{figure}

\noindent Type $\mathrm{BC}1$:  Either the branch point lies on a non-crossing strand, or prior to the branching both of the edges of the chord lie on non-crossing strands. 

\medskip 

In this case, it is possible to the simply move the branching point to the other side of the crossing.  Thus, we can assign edges ${\bf L}\mathrm{BC}1 \leftrightarrow {\bf R}\mathrm{BC}1$; see Figure~\ref{f:edge-LBC1-RBC1}.

\medskip

\begin{figure}[t]
\labellist
\small\hair 2pt
\pinlabel {{\bf L}BC1} [tl] at 45 6
\pinlabel {{\bf R}BC1} [tl] at 200 6
\pinlabel {{\bf L}BC1} [tl] at 350 6
\pinlabel {{\bf R}BC1} [tl] at 510 6
\endlabellist
\centering
\includegraphics[scale=.55]{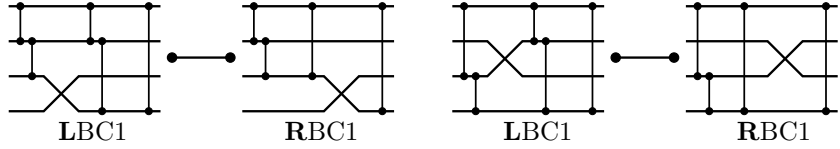}
\caption[]{Edges between vertices with exceptional branchings near a crossing.}
\label{f:edge-LBC1-RBC1}
\end{figure}

\noindent Type $\mathrm{BC}2$:  Before the exceptional branching one end of the chord lies on one of the crossing strands, and the break point of the branching also lies on one of the crossing strands.   

\medskip

It is easy to see that one of the cases BC1 and BC2 must occur.  Also, note that ${\bf L}\mathrm{BC}2$   cannot occur as it would produce a chord running directly into a crossing.  The exceptional branchings of type ${\bf R}\mathrm{BC}2$ are precisely the chord paths that arise from gluing at a convex corner $c$; see Figure~\ref{f:edge-RBC2-broken-chord-paths-and-LBC2}.
\[
{\bf R}\mathrm{BC}2 \leftrightarrow \mbox{Broken chord path with the break occurring at a convex corner.}
\]

\bigskip

\noindent Near a {\bf convex corner}:

\medskip

\noindent Type $\mathrm{BX}1$:   The convex corner does not involve one of the endpoints which is newly formed by the branching.  

\medskip

If the convex corner does involve one of the new endpoints then it must lie to the left of the exceptional branching.  We subdivide this case into two types.

\medskip

\noindent Type ${\bf L}\mathrm{BX}2$ (resp. Type ${\bf L}\mathrm{BX}3$) :  The chord breaks at the bottom (resp. top) of the two crossing strands.  

\medskip

Edges can be assigned as ${\bf L}\mathrm{BX}1 \leftrightarrow {\bf R}\mathrm{BX}1$ and ${\bf L}\mathrm{BX}2 \leftrightarrow {\bf L}\mathrm{BX}3$; see Figure~\ref{f:edge-LBX1-RBX1-LBX2-LBX3}.  Note that exceptional branchings of type ${\bf R}\mathrm{BX}2$ and ${\bf R}\mathrm{BX}2$ cannot occur.  

\begin{figure}[t]
\labellist
\small\hair 2pt
\pinlabel {{\bf L}BX1} [tl] at 35 6
\pinlabel {{\bf R}BX1} [tl] at 191 6
\pinlabel {{\bf L}BX2} [tl] at 357 6
\pinlabel {{\bf L}BX3} [tl] at 516 6
\endlabellist
\centering
\includegraphics[scale=.55]{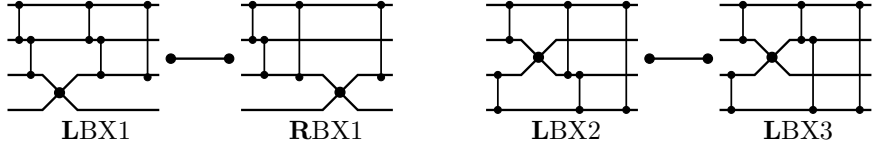}
\caption[]{Edges between vertices with exceptional branchings near a convex corner.}
\label{f:edge-LBX1-RBX1-LBX2-LBX3}
\end{figure}


\noindent Near a {\bf handleslide}:  

\medskip

\noindent Type $\mathrm{BH}1$:  None of the chords jump along the handleslide.

\medskip

\noindent Type $\mathrm{BH}2$:  One of the chords jumps, but it is along an endpoint other than the two new ones created by the exceptional branching.  (That is, other than the bottom end of $\alpha_1$ and the top end of $\beta_1$.)

\medskip

\noindent Type ${\bf L}\mathrm{BH}3$: Bottom end of $\alpha_1$ jumps.

\medskip

\noindent Type ${\bf L}\mathrm{BH}4$: Top end of $\beta_1$ jumps.

\medskip

Clearly, there are not analogous cases ${\bf R}\mathrm{BH}3$ or ${\bf R}\mathrm{BH}4$ to consider.  The cases listed above are exhaustive, and we assign edges as

\begin{align}
{\bf L}\mathrm{BH}1 & \leftrightarrow {\bf R}\mathrm{BH}1, \,\, & {\bf L}\mathrm{BH}2 & \leftrightarrow {\bf R}\mathrm{BH}2, \\ \nonumber
{\bf L}\mathrm{BH}3 & \leftrightarrow {\bf L}\mathrm{BH}4. & & \nonumber
\end{align}

See Figure~\ref{f:edge-BH1-to-BH4}.

\begin{figure}[t]
\labellist
\small\hair 2pt
\pinlabel {{\bf L}BH1} [tl] at 35 128
\pinlabel {{\bf R}BH1} [tl] at 190 128
\pinlabel {{\bf L}BH2} [tl] at 348 128
\pinlabel {{\bf R}BH2} [tl] at 507 128

\pinlabel {{\bf L}H3} [tl] at 37 9
\pinlabel {{\bf L}H4} [tl] at 197 9
\endlabellist
\centering
\includegraphics[scale=.55]{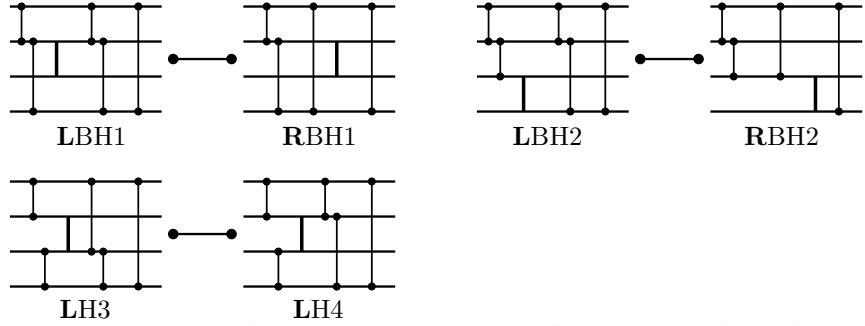}
\caption[]{Edges between vertices with exceptional branchings near a handleslide.}
\label{f:edge-BH1-to-BH4}
\end{figure}

\subsubsection{Exceptional steps near convex corners and branching near terminal crossings.}

A few types of edges may interchange the exceptional steps and branching.  Such an edge will necessarily exchange a convex corner for a terminal crossing as well.

\bigskip

\noindent
Exceptional steps near a {\bf convex corner}:

\medskip

\noindent Type $\mathrm{X1}$:  The exceptional step occurs between two non-crossing strands.

\medskip

\noindent Type ${\bf L}\mathrm{X2}$:  The chord which jumps at the exceptional step lands on one of the crossing strands.

\medskip

\begin{remark} (i) In this case the convex corner has to appear to the left of the exceptional step.  

(ii) Prior to the jump the chord must have its two ends on non-crossing strands and stretch over a vertical interval containing the $z$-coordinate of the crossing.
\end{remark}

\noindent Type ${\bf R}\mathrm{X3}$:  The chord which jumps from a crossing strand to a non-crossing strand during the exceptional step.  (This implies the convex corner lies to the right of the exceptional step.)

\medskip

Exceptional branching near a {\bf terminal crossing}:

\medskip

\noindent Type ${\bf L}\mathrm{BT1}$:  The branching breaks the chord at one of the crossing strands.

\medskip

\noindent Type ${\bf L}\mathrm{BT2}$:  The branching breaks the chord at a non-crossing strand.

\medskip

Edges can be assigned as

\begin{align}
{\bf L}\mathrm{X1} & \leftrightarrow {\bf R}\mathrm{X1}, \,\, & {\bf L}\mathrm{X2} & \leftrightarrow {\bf L}\mathrm{BT1}, \\ \nonumber
{\bf R}\mathrm{X2} & \leftrightarrow {\bf L}\mathrm{BT2}. & & \nonumber
\end{align}

\begin{figure}[t]
\labellist
\small\hair 2pt
\pinlabel {{\bf L}X1} [tl] at 40 130
\pinlabel {{\bf R}X1} [tl] at 195 130
\pinlabel {{\bf L}X2} [tl] at 355 130
\pinlabel {{\bf L}BT2} [tl] at 515 130

\pinlabel {{\bf R}X3} [tl] at 40 10
\pinlabel {{\bf L}BT2} [tl] at 193 10
\endlabellist
\centering
\includegraphics[scale=.55]{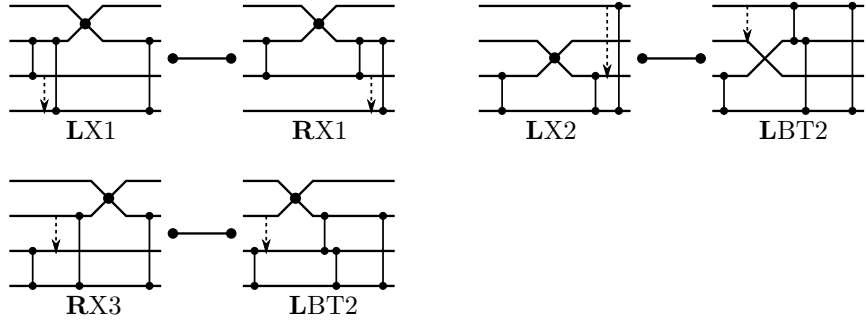}
\caption[]{Edges between vertices with exceptional branchings near the terminal crossing.}
\label{f:edge-X1-LX2-LBT1-RX2-LBT2}
\end{figure}

See Figure~\ref{f:edge-X1-LX2-LBT1-RX2-LBT2}. This completes the edge set $E$, and the proof that $d^2=0$.
\begin{flushright}
$\Box$
\end{flushright}

\subsection{Equivalent MCSs induce isomorphic linearized DGAs} 
\mylabel{sec:Linear-inv}
We prove Theorem~\ref{thm:linear-iso} by considering each MCS move depicted in Figures~\ref{f:MCS-equiv}~and~\ref{f:MCS-explosion}. The proof of Theorem~\ref{thm:linear-iso} for an MCS move obtained by reflecting a move depicted in Figures~\ref{f:MCS-equiv}~and~\ref{f:MCS-explosion} is similar. In the case of MCS moves 1-7 and 9-14, the chain isomorphism is the identity map. We require a ``combinatorial gluing argument'' similar to the proof of $d^2=0$ in order to construct the chain isomorphism for MCS moves 8 and 15. 

\begin{proof}[Proof of Theorem~\ref{thm:linear-iso}]
We need only consider the case when $\sMCS^{+}$ is equivalent to $\sMCS^{-}$ by a single MCS move. Given a fixed MCS move, we construct a chain isomorphism $\Phi : (A_{\sMCS^{+}}, d^{+}_1) \to (A_{\sMCS^{-}}, d^{-}_1)$ by specifying its action on the generating set $Q(\sfront)$. Recall from Section~\ref{sec:MCS-DGA-defn}, given $a \in Q(\sfront)$, 
\begin{align*}
&d^{+}_1 a = \sum_{b \in Q(L)} \# \mathcal{M}^{\sMCS^{+}}(a; b) b \quad \mbox{ and } \quad d^{-}_1 a = \sum_{b \in Q(L)} \# \mathcal{M}^{\sMCS^{-}}(a; b) b. \nonumber 
\end{align*}
where $\mathcal{M}^{\sMCS^{+}}(a; b)$ and $\mathcal{M}^{\sMCS^{-}}(a; b)$ are the sets of chord paths (without convex corners) from $a$ to $b$ in $\sMCS^{+}$ and $\sMCS^{-}$ respectively. 

We begin with a few notational conventions and simplifying observations. Regardless of the MCS move under consideration, we will let $T \subset \sfront$ be the tangle in a small neighborhood of the move depicted in Figures~\ref{f:MCS-equiv}~and~\ref{f:MCS-explosion}. For each MCS move, we will assume $\sMCS^{+}$ is to the left of the double arrow in Figures~\ref{f:MCS-equiv}~and~\ref{f:MCS-explosion} and $\sMCS^{-}$ is to the right. If $\sMCS^{+}$ and $\sMCS^{-}$ contain one handleslide, then it is labeled $\alpha$. If $\sMCS^{+}$ (resp. $\sMCS^{-}$) contains exactly two handleslides, then they are labeled $\alpha$ and $\beta$ from left to right (resp. right to left). In MCS move 6, the handleslides in $\sMCS^{-}$ are labeled $\beta$, $\gamma$, $\alpha$ from left to right.

Chord paths beginning at $a$ progress to the left. Therefore, if $a$ is left of $T$ or contained in $T$, then $\mathcal{M}^{\sMCS^{+}}(a; b) = \mathcal{M}^{\sMCS^{-}}(a; b)$ for all $b \in Q(\sfront)$ and so $d^{+}_1 a = d^{-}_1 a$. Hence, independent of the MCS move under consideration, we will define $\Phi(a) = a$ for each such $a \in Q(\sfront)$. If $a$ and $b$ are both right of $T$ then, again, $\mathcal{M}^{\sMCS^{+}}(a; b) = \mathcal{M}^{\sMCS^{-}}(a; b)$. Thus, we need only consider $a$ to the right of $T$ and $b$ contained in $T$ or to the left of $T$. Given such a pair $a$ and $b$, and $i \geq 0$, let $\mathcal{M}^{\sMCS^{+}}(a; b, i) \subset \mathcal{M}^{\sMCS^{+}}(a; b)$ be chord paths that jump along exactly $i$ handleslide marks of $\sMCS^{+}$ in $T$. The set $\mathcal{M}^{\sMCS^{-}}(a; b, i)$ is similarly defined. If $b$ is left of $T$ and $\Lambda \in \mathcal{M}^{\sMCS^{+}}(a; b)$ passes through $T$ without jumping along a handleslide in $T$, then there is a corresponding chord path $\Lambda' \in  \mathcal{M}^{\sMCS^{-}}(a; b)$ that agrees with $\Lambda$ away from $T$ and does not jump along a handleslide in $T$. In fact, this gives us a bijection between chord paths in $\mathcal{M}^{\sMCS^{+}}(a; b, 0)$ and $\mathcal{M}^{\sMCS^{-}}(a; b, 0)$. Thus, we need only consider $\mathcal{M}^{\sMCS^{+}}(a; b, i)$ and $\mathcal{M}^{\sMCS^{-}}(a; b, i)$ with $i > 0$, except in moves 7, 8, and 10 where we must also consider the case where $b$ is in $T$ and $i=0$. We will prove the following claim for MCS moves 1-7 and 9-14. MCS moves 8 and 15 will be handled separately. 

\begin{claim}
\label{claim:iso-1}
Suppose $\sMCS^{+}$ is equivalent to $\sMCS^{-}$ by one of MCS moves 1-7 or 9-14. Then, for all $a, b \in Q(\sfront)$, $\# \mathcal{M}^{\sMCS^{+}}(a; b) = \# \mathcal{M}^{\sMCS^{-}}(a; b) \mod 2$, hence $d^{+}_1 = d^{-}_1$ and $\Phi = Id$ is a chain isomorphism. In fact, for MCS moves 2-5 and 9-14 there is a bijection from $\mathcal{M}^{\sMCS^{+}}(a; b)$ to $\mathcal{M}^{\sMCS^{-}}(a; b)$.
\end{claim}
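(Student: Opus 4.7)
The plan is to handle each of MCS moves 1--7 and 9--14 individually, establishing in each case either an explicit bijection $\Psi : \mathcal{M}^{\sMCS^+}(a;b) \to \mathcal{M}^{\sMCS^-}(a;b)$ (for moves 2--5 and 9--14) or a more elaborate matching that identifies the two sets modulo 2 (for moves 1, 6, 7). The reductions already carried out just before the claim restrict us to chord paths that enter the tangle $T$ from the right and jump along at least one handleslide within $T$, along with, for moves 7 and 10, the chord paths terminating inside $T$.

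The organizing principle is that for every MCS move, the composition of handleslide maps read across $T$ is the same in $\sMCS^+$ and in $\sMCS^-$; this identity is essentially what makes the move a valid modification of an MCS (cf.\ Proposition~3.8 of \cite{Henry2011}). For the moves that simply transport a handleslide past a crossing or a cusp, I would check by cases on the strand configuration of the entering chord relative to the strands involved in the handleslides of $T$ that each chord entering $T$ from the right admits a unique allowable trajectory through $T$ on each side, and that the two trajectories exit $T$ along the same chord. Combined with the canonical bijection on the exterior of $T$, this yields a strict bijection $\Psi$, which is what Claim~\ref{claim:iso-1} asserts for moves 2--5 and 9--14. In each case the verification reduces to checking the chord-extension rules of Definition~\ref{def:ChordPath} against the local picture.

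For the moves yielding only a mod 2 equality (moves 1, 6, 7), the same composition of handleslide maps is being expressed as two genuinely different ordered products. Here I would assign to each chord path through $T$ a \emph{trace}: the sequence of strand transitions of each endpoint as the chord moves leftward. The multiplicity of chord paths realizing a prescribed pair of entering and exiting chords on a given side then equals the corresponding matrix coefficient in the composition of handleslide maps, and the equality of the compositions yields $\#\mathcal{M}^{\sMCS^+}(a;b) \equiv \#\mathcal{M}^{\sMCS^-}(a;b)\pmod 2$. To make this structural I would construct a fixed-point-free involution on the ``excess'' chord paths by locating consecutive handleslide jumps that can be swapped or removed, pairing each excess chord path with its partner; the fixed locus of the involution matches via the earlier enumerative bijection.

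The main obstacle I expect is MCS move 6 (and possibly move 1), since these involve three handleslides configured so that a chord endpoint has several distinct routes through $T$, and these routes do not match one-to-one between $\sMCS^+$ and $\sMCS^-$. Careful case analysis on the landing strand of each endpoint after each handleslide, together with verification that the resulting sequences of chords remain legal under the intermediate differentials $d_m$ (which may differ inside $T$ between the two MCSs, even though the complexes on the two vertical sides of $T$ coincide), will be required. Moves 10--14, involving cusps, I expect to reduce to comparatively easier bijections, since nearly plat position and Definition~\ref{def:ChordPath}(4b--c) tightly constrain how a chord can approach a cusp.
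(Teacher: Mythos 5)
Your overall strategy---case-by-case on the moves, strict bijections where possible and parity pairings for moves 1, 6, 7---matches the paper's, and your identification of which moves admit only a mod-2 argument is correct. But two of your key mechanisms have genuine problems. First, for the bijective cases you assert that each chord entering $T$ ``admits a unique allowable trajectory through $T$ on each side''; this is false for moves 2--5, where a chord whose endpoint lies on a handleslide strand may either jump or not jump, so there are in general several trajectories. What is actually true (and what the paper proves) is that the arrangement of marks forces at most one jump inside $T$, and that trajectories on the two sides then match one-to-one. Second, your ``trace'' principle---that the number of chord paths through $T$ with prescribed entering and exiting chords equals a matrix coefficient of the composition of handleslide maps---is not literally correct: chord propagation is governed by the conjugation action truncated to chords of positive length (a jump must shrink the chord, and every intermediate chord must remain nondegenerate), and this truncation does not commute with composition in general. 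Verifying that the truncated compositions agree for the specific configurations of moves 1, 6, and 7 is precisely the case analysis the paper carries out; for move 6, for instance, one must check that a double upward jump along $\alpha$ and $\beta$ corresponds to a single upward jump along $\gamma$, while downward jumps along $\gamma$ cancel in pairs against downward double jumps along $\alpha$ and $\beta$.

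The largest gap is move 7 when the terminal crossing $b$ lies inside $T$. There the termination condition of Definition~\ref{def:ChordPath}~(3) involves the differential $d_p$ immediately to the left of $b$, which changes when the handleslide passes the crossing; your handleslide-composition framework says nothing about this. The paper needs a separate four-way case analysis on the pair $(\langle d_p e_j, e_k \rangle, \langle d_p e_l, e_k \rangle)$ together with several distinct bijections between subsets of terminating chord paths (some of which mix chord paths ending on different strands), and nothing in your sketch produces that. Relatedly, move 10 involves a crossing, not a cusp, and its terminating-at-$b$ case requires the observation that the differential entries relevant to termination are unchanged by the handleslide. Your expectation that move 6 is the main obstacle is misplaced: it is dispatched by a short pairing argument, whereas move 7 is where the real work lies.
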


\textbf{MCS Move 1:} Note that $\mathcal{M}^{\sMCS^{+}}(a; b, i) = \emptyset$ if $i > 0$ and $\mathcal{M}^{\sMCS^{-}}(a; b, i) = \emptyset$ if $i > 1$. Thus, in order to prove Claim~\ref{claim:iso-1}, it remains to show $\# \mathcal{M}^{\sMCS^{-}}(a; b, 1) = 0$. But there is a natural pairing of elements in $\mathcal{M}^{\sMCS^{-}}(a; b, 1)$. If $\Lambda' \in \mathcal{M}^{\sMCS^{-}}(a; b, 1)$ jumps along $\alpha$ then there is a chord path in $\mathcal{M}^{\sMCS^{-}}(a; b, 1)$ that agrees with $\Lambda'$ away from $T$ and jumps along $\beta$. Hence, Claim~\ref{claim:iso-1} follows for move 1. 

\textbf{MCS Moves 2-5, 9, and 11-14:} Since $\sfront$ is nearly plat, if a chord in a chord path appears to the immediate left or right of a cusp, then the chord must have endpoints on consecutive strands of $\sfront$. A chord between consecutive strands can not jump along a handleslide. Thus, in the case of moves 9 and 11 - 14, we find $\mathcal{M}^{\sMCS^{\pm}}(a; b, i) = \emptyset$ if $i > 0$. Hence, Claim~\ref{claim:iso-1} follows for moves 9 and 11 - 14 from our previous discussion. For moves 2-5, the arrangements of handleslide marks in $\sMCS^{+}$ and $\sMCS^{-}$ ensure $\mathcal{M}^{\sMCS^{\pm}}(a; b, i) = \emptyset$ if $i > 1$. If $\Lambda \in \mathcal{M}^{\sMCS^{+}}(a; b, 1)$ jumps along $\alpha$ (resp. $\beta$), then there is a corresponding $\Lambda' \in  \mathcal{M}^{\sMCS^{-}}(a; b, 1)$ that agrees with $\Lambda$ away from $T$ and jumps along $\alpha$ (resp. $\beta$). In fact, this gives us a bijection between chord paths in $\mathcal{M}^{\sMCS^{+}}(a; b, 1)$ and $\mathcal{M}^{\sMCS^{-}}(a; b, 1)$. Hence, Claim~\ref{claim:iso-1} follows for moves 2-5. 

\textbf{MCS Move 6:} Note $\mathcal{M}^{\sMCS^{-}}(a; b, i) = \emptyset$ if $i > 2$. If $\Lambda \in \mathcal{M}^{\sMCS^{+}}(a; b)$ jumps along $\alpha$ (resp. $\beta$), then there is a corresponding $\Lambda' \in  \mathcal{M}^{\sMCS^{-}}(a; b)$ that agrees with $\Lambda$ away from $T$ and jumps along $\alpha$ (resp. $\beta$). In fact, this gives an injection from $\mathcal{M}^{\sMCS^{+}}(a; b, 1)$ into $\mathcal{M}^{\sMCS^{-}}(a; b, 1)$. A chord path $\Lambda \in \mathcal{M}^{\sMCS^{+}}(a; b, 2)$ jumping along both $\alpha$ and $\beta$ must do so by jumping upward along $\alpha$ and $\beta$. Such a chord path corresponds to a chord path in $\mathcal{M}^{\sMCS^{-}}(a; b,1)$ that jumps upward along $\gamma$. Note that a chord path in $\mathcal{M}^{\sMCS^{-}}(a; b, 2)$ can not jump upward along any of $\alpha$, $\beta$, or $\gamma$. Hence, it remains to show that the subset of $\mathcal{M}^{\sMCS^{-}}(a; b, 1) \cup \mathcal{M}^{\sMCS^{-}}(a; b, 2)$ containing chord paths that either jump downward along $\gamma$ or jump downward along both $\alpha$ and $\beta$ contains an even number of elements. As in the case of move 1, there is a natural pairing on this set. If $\Lambda' \in \mathcal{M}^{\sMCS^{-}}(a; b)$ jumps downward along $\gamma$, then there is a chord path in $\Lambda' \in \mathcal{M}^{\sMCS^{-}}(a; b)$ that agrees with $\Lambda'$ way from $T$ and jumps downward along both $\alpha$ and $\beta$. Hence, Claim~\ref{claim:iso-1} follows for move 6.

\begin{figure}[t]
\labellist
\small\hair 2pt
\pinlabel {1} [br] at 161 284
\pinlabel {2} [br] at 463 284
\pinlabel {3} [br] at 161 161
\pinlabel {4} [br] at 463 161
\pinlabel {5} [br] at 161 52
\pinlabel {6} [br] at 463 52
\endlabellist
\centering
\includegraphics[scale=.55]{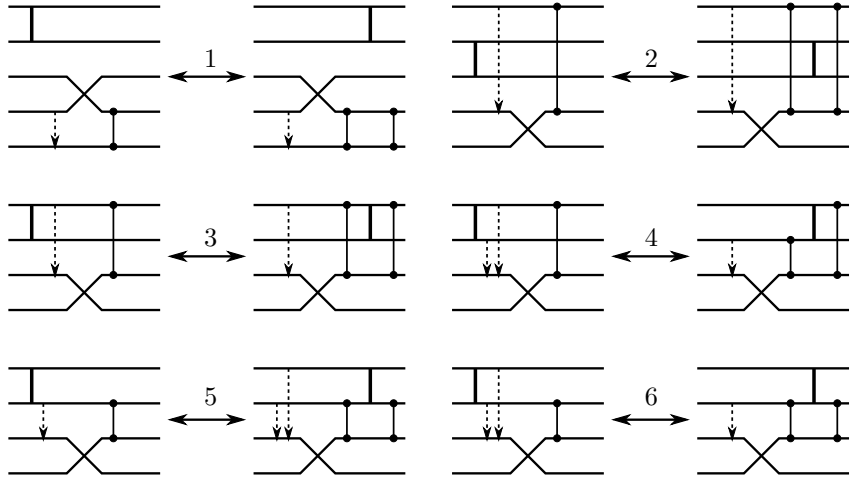}
\caption[]{The bijections relating $\mathcal{M}^{\sMCS^{+}}(a; b)$ and $ \mathcal{M}^{\sMCS^{-}}(a; b)$ in MCS move 7.}
\label{f:move-7}
\end{figure}

\begin{figure}[t]
\labellist
\small\hair 2pt
\pinlabel {7} [br] at 158 50
\endlabellist
\centering
\includegraphics[scale=.55]{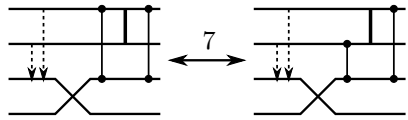}
\caption[]{The bijection between $\mmc^{\sMCS^{-}}(j)$ and $\mmc^{\sMCS^{-}}_{\alpha}(l)$ in the case $(\langle d_p e_j, e_k \rangle, \langle d_p e_l, e_k \rangle)=(1,1)$.}
\label{f:move-7-2}
\end{figure}

\textbf{MCS Move 7:} Suppose $\alpha$ has endpoints on strands $j$ and $l$, $j < l$. 

Suppose $b$ is to the left of $T$. Note that $\mathcal{M}^{\sMCS^{\pm}}(a; b, i) = \emptyset$ if $i > 1$. Following the argument above for moves 2-5, there is a bijection between the chord paths in $\mathcal{M}^{\sMCS^{+}}(a; b, 1)$ and $\mathcal{M}^{\sMCS^{-}}(a; b, 1)$. Hence, Claim~\ref{claim:iso-1} follows when $b$ is to the left of $T$. 

Suppose $b$ is the crossing in $T$. Given $m > 0$ let $\mmc^{\sMCS^{+}}(m) \subset \mmc^{\sMCS^{+}}(a;b)$ be the chord paths with a terminating chord having its upper endpoint on strand $m$ and define the subset $\mmc^{\sMCS^{-}}(m) \subset \mmc^{\sMCS^{-}}(a;b)$ similarly. Note $\mmc^{\sMCS^{+}}(a;b) = \mmc^{\sMCS^{+}}(k+1) \cup \left [\bigcup_{m < k} \mmc^{\sMCS^{+}}(m) \right ]$ and $\mmc^{\sMCS^{-}}(a;b) = \mmc^{\sMCS^{-}}(k+1) \cup \left [\bigcup_{m < k} \mmc^{\sMCS^{-}}(m) \right ]$. There is a bijection between $\mmc^{\sMCS^{+}}(m)$ and $\mmc^{\sMCS^{-}}(m)$ when $m = k+1$ or $m < k$ and $m \notin \{j, l \}$. The bijection is described in Figure~\ref{f:move-7} (1) for $m = k+1$ and in Figure~\ref{f:move-7} (2) for $m < k$ and $m \notin \{j, l \}$. Hence, we are left to show: 
\begin{equation}
\label{eq:move-7}
\# ( \mmc^{\sMCS^{+}}(j) \cup \mmc^{\sMCS^{+}}(l) ) = \# ( \mmc^{\sMCS^{-}}(j) \cup \mmc^{\sMCS^{-}}(l) ).
\end{equation}

Let $\mmc^{\sMCS^{-}}_{\alpha}(l) \subset \mmc^{\sMCS^{-}}(l)$ be the chord paths that jump along $\alpha$ in $T$ and let $(\mmc^{\sMCS^{-}}_{\alpha}(l))^{c}$ be the complement of $\mmc^{\sMCS^{-}}_{\alpha}(l)$ in $\mmc^{\sMCS^{-}}(l)$. Consider the possible values of the pair $(\langle d_p e_j, e_k \rangle, \langle d_p e_l, e_k \rangle)$ where $(C_p, d_p)$ is the chain complex of $\sMCS^{+}$ between the handleslide $\alpha$ and the crossing $b$. For each value of $(\langle d_p e_j, e_k \rangle, \langle d_p e_l, e_k \rangle)$, we list below bijections and give figure references for their descriptions. In each case, equation~(\ref{eq:move-7}) and, hence, Claim~\ref{claim:iso-1} follow immediately.
\begin{align}
&(0,0): \mmc^{\sMCS^{\pm}}(l)=\mmc^{\sMCS^{\pm}}(j)=\emptyset; \nonumber \\
&(1,0): \mmc^{\sMCS^{\pm}}(l)=\emptyset, \mmc^{\sMCS^{+}}(j) \Leftrightarrow \mmc^{\sMCS^{-}}(j) \mbox{ [\ref{f:move-7} (3)]};  \nonumber \\
&(1,1): \mmc^{\sMCS^{-}}(j)=\emptyset, \mmc^{\sMCS^{+}}(j) \Leftrightarrow \mmc^{\sMCS^{-}}_{\alpha}(l) \mbox{ [\ref{f:move-7} (4)]}, \mmc^{\sMCS^{+}}(l) \Leftrightarrow (\mmc^{\sMCS^{-}}_{\alpha}(l))^{c} \mbox{ [\ref{f:move-7} (6)]};  \nonumber \\
&(0,1): \mmc^{\sMCS^{+}}(j)=\emptyset, \mmc^{\sMCS^{+}}(l) \Leftrightarrow (\mmc^{\sMCS^{-}}_{\alpha}(l))^{c} \mbox{ [\ref{f:move-7} (5)]}, \mmc^{\sMCS^{-}}(j) \Leftrightarrow \mmc^{\sMCS^{-}}_{\alpha}(l) \mbox{ [\ref{f:move-7-2} (7)]}. \nonumber 
\end{align}

\textbf{MCS Move 10:} If $b$ is not the crossing in $T$, then Claim~\ref{claim:iso-1} follows as in the proof of move 7. Suppose $b$ is the crossing in $T$ between strands $k$ and $k+1$. A chord path in $\sMCS^{-}$ originating at $a$ cannot terminate at $b$ by jumping along $\alpha$. Hence, $\mathcal{M}^{\sMCS^{+}}(a; b, i) = \mathcal{M}^{\sMCS^{-}}(a; b, i) = \emptyset$ if $i > 0$. Suppose $(C_p, d_p)$ and $(C_{p+1}, d_{p+1})$ are the chain complexes of $\sMCS^{+}$ on either side of $\alpha$. Then, $\langle d_p e_i, e_k \rangle = \langle d_{p+1} e_i, e_k \rangle$ and $\langle d_p e_{k+1}, e_j \rangle = \langle d_{p+1} e_{k+1}, e_j \rangle$ for all $i < k$ and $k+1 < j$. Thus, for each chord path terminating at $b$ in $\sMCS^{+}$ there is a corresponding chord path of $\sMCS^{-}$ terminating at $b$; and vice versa. Therefore, $\mathcal{M}^{\sMCS^{+}}(a; b, 0)=\mathcal{M}^{\sMCS^{-}}(a; b, 0)$ and Claim 1 follows. 

In the case of the MCS move which results from reflecting MCS move 8 in Figure~\ref{f:MCS-equiv} about a vertical axis, the chain isomorphism $\Phi$ is still the identity map. In the case of MCS move 15 and the version of MCS move 8 depicted in Figure~\ref{f:MCS-equiv}, the chain isomorphism $\Phi$ may not be the identity map and, in fact, we require considerably more machinery to prove the theorem in these cases. The proofs for these cases require similar notation and machinery, so we will describe much of it in the next few paragraphs and then give the proof for each move.

\begin{figure}[t]
\labellist
\small\hair 2pt
\pinlabel {$l$} [br] at 18 193
\pinlabel {$m$} [br] at 18 171
\pinlabel {$m+1$} [br] at 18 144
\pinlabel {$\alpha$} [tl] at 40 169
\pinlabel {$\beta$} [tl] at 233 151
\pinlabel {$\alpha$} [tl] at 433 146
\pinlabel {$\alpha$} [tl] at 236 57
\pinlabel {$q_k$} [tl] at 65 149
\pinlabel {$l$} [br] at 18 80
\pinlabel {$m$} [br] at 18 57
\endlabellist
\centering
\includegraphics[scale=.55]{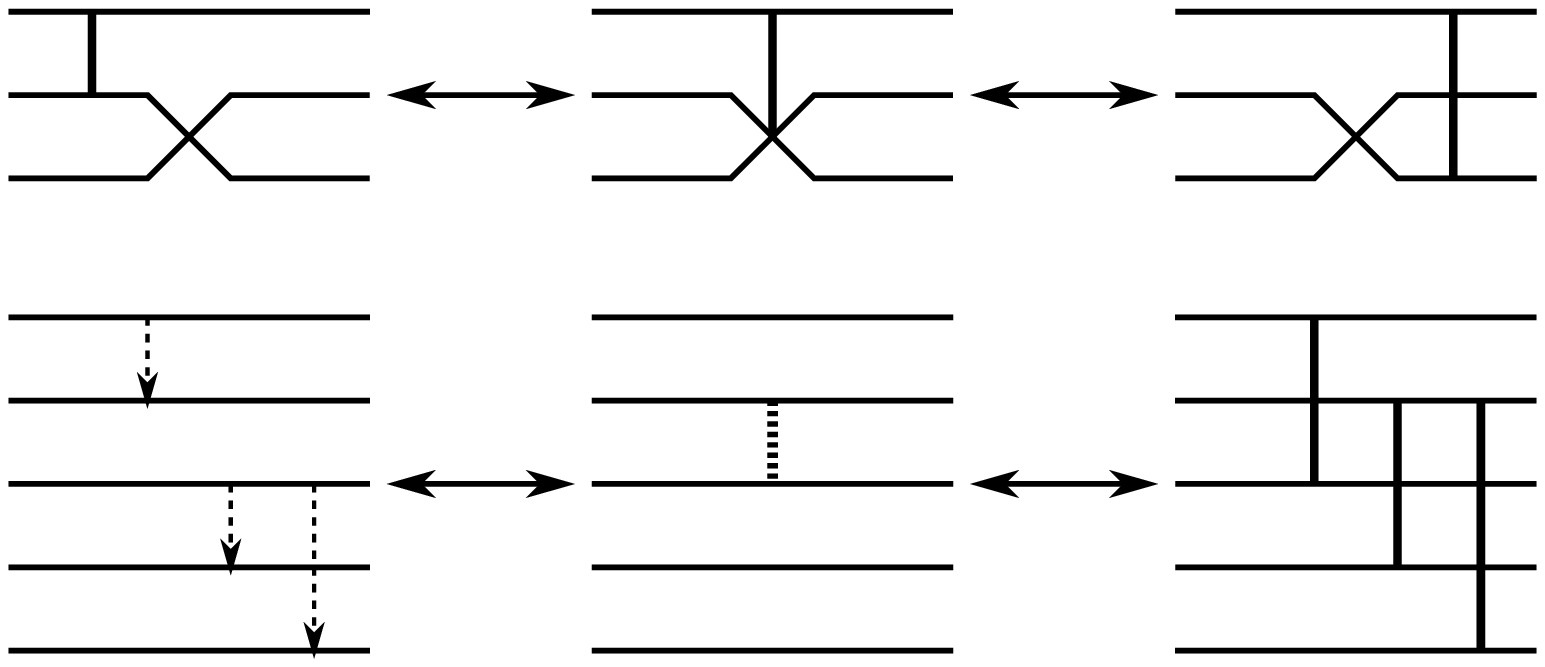}
\caption[]{$\sMCS^{+}$, $\sMCS^{0}$, and $\sMCS^{-}$ (in order) for MCS moves 8 and 15.}
\label{f:move-8-15}
\end{figure}

Label the crossings and rights cusps of $\sfront$ by $q_1, \hdots, q_n$ so that $q_1 < q_2 < \hdots < q_n$ with respect to the ordering from the $x$-axis. Suppose $\sMCS^{+},  \sMCS^{-} \in \sFMCS$ differ by either MCS move 8 or 15 so that, in either case, $\sMCS^{+}$ (resp. $\sMCS^{-}$) is to the left (resp. right) of the double arrow in Figure~\ref{f:MCS-equiv}~or~\ref{f:MCS-explosion}. Suppose $\sMCS^+ = \left( \{(C_m, d_m)\}, \{x_m\}, H \right)$. We may assume without loss of generality that the left edge of $T$ has an associated chain complex $(C_p, d_p; x_p)$ in $\sMCS^+$. In fact, the chain complexes and explicit handleslide marks of $\sMCS^+$ and $\sMCS^-$ agree outside of $T$, so $(C_r, d_r; x_r)$ is a chain complex of both $\sMCS^+$ and $\sMCS^-$ for $1 \leq r \leq p$. We will define an MCS-like object $\sMCS^{0}$ for both MCS moves 8 and 15, which we think of as an intermediary between $\sMCS^{+}$ and $\sMCS^{-}$.

In the case of move 8, suppose $q_k$ is the crossing in $T$ with strands $m$ and $m+1$ crossing at $q_k$. Suppose the handleslide mark $\alpha$ of $\sMCS^{+}$ in $T$ is between strands $l$ and $m$ with $l<m$. We also let $\alpha$ denote the corresponding handleslide in $\sMCS^{-}$; see Figure~\ref{f:move-8-15}. Let $\beta$ denote a handleslide mark in $\sfront$ between the singularity point of $q_k$ and strand $l$; see Figure~\ref{f:move-8-15}. We define $\sMCS^{0}$ to be the set of explicit handleslide marks $(H \cup \{ \beta \}) \setminus \{\alpha\}$ along with the chain complexes $\{(C_m, d_m) | m \neq p+1 \}$. 

In the case of move 15, the handleslide marks of $\sMCS^-$ in $T$ correspond to the gradient flowlines entering $e_l$ and leaving $e_m$ for two generators $e_l, e_m \in C_{p}$ satisfying  $l < m$ and $|e_l|=|e_m| - 1$. Choose $x_p < \bar{x} < x_{p+1}$ so that the tangle $\sfront \cap ([x_p, \bar{x}] \times \rr)$ does not contain handleslide marks, crossings or cusps of $\sMCS^{+}$. In fact, we may assume $x_{p+1}$ is to the right of $T$ and choose $\bar{x}$ so that $\{ \bar{x} \} \times \rr$ is the right edge of $T$. Let $\alpha$ denote a vertical mark between strands $l$ and $m$ in $\sfront \cap ([x_p, \bar{x}] \times \rr)$. We represent $\alpha$ by a dashed line; see Figure~\ref{f:move-8-15}. We define $\sMCS^{0}$ to be the set of vertical marks $H \cup \{ \alpha \}$ along with all of the chain complexes of $\sMCS^{+}$ including an additional chain complex $(C_{\bar{x}}, d_{\bar{x}})$ above $\bar{x}$ defined by $ d_{\bar{x}} = d_p$. 

\begin{figure}[t]
\centering
\includegraphics[scale=.55]{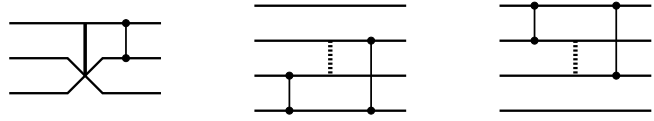}
\caption[]{Conditions $(3_8)$ and $(4_{15})$ in the definitions of $\mathcal{M}^{0}(i ; k)$.}
\label{f:move-8-15-2}
\end{figure}

For $q_i, q_j \in Q(\sfront)$, we will use the shorthand $\mathcal{M}^{+}(i ; j)=\mathcal{M}^{\sMCS^{+}}(q_i, q_j)$ and $\mathcal{M}^{-}(i ; j)=\mathcal{M}^{\sMCS^{-}}(q_i, q_j)$. Fix $q_i \in Q(\sfront)$. In the case of move 8, we define the set $\mathcal{M}^{0}(i ; k)$ by $\Lambda \in \mathcal{M}^{0}(i ; k)$ if and only if $\Lambda = (\lambda_1, \hdots, \lambda_n)$ is a finite sequence of chords originating at $q_i$, satisfying Conditions (1), (2), and (4) of Definition~\ref{def:ChordPath} for the MCS $\sMCS^{+}$, and:

\begin{condition}[$3_8$]
\label{c:3-8}
The chord $\lambda_n$ is just to the right of $q_k$ between strands $l$ and $m$; see Figure~\ref{f:move-8-15-2}.
\end{condition}

Now in the case of move 15, we define the set $\mathcal{M}^{0}(i ; k)$ by $\Lambda \in \mathcal{M}^{0}(i ; k)$ if and only if $\Lambda = (\lambda_1, \hdots, \lambda_n)$ is a finite sequence of chords originating at $q_i$, terminating at $q_j$, and satisfying Conditions (1)-(4) of Definition~\ref{def:ChordPath} for $\sMCS^{0}$ except in one place where $\Lambda$ satisfies: 
\begin{condition}[$4_{15}$]
If $1 \leq r < n$ and the chord $\lambda_r$ occurs at $\bar{x}$, then either:
\begin{enumerate}
	\item $\lambda_r = (\bar{x}, [l,j])$ with $j>m$ and $\lambda_{r+1} = (x_{p}, [m,j])$, or
	\item $\lambda_r = (\bar{x}, [j,m])$ with $j<l$ and $\lambda_{r+1} = (x_{p}, [j,l])$.
\end{enumerate}
In short, $\Lambda$ jumps along $\alpha$ in $T$; see Figure~\ref{f:move-8-15-2}.
\end{condition}

\begin{proof}[Proof of Theorem~\ref{thm:linear-iso} for MCS Move 8]

We define $\Phi : \aac_1(\sMCS^+) \to \aac_1(\sMCS^-)$ to be the linear extension of the map on $Q(\sfront)$ given by $\Phi(q_i) = q_i + \# \mathcal{M}^{0}(q_i; q_k) q_k$. The set $\mathcal{M}^{0}(i; k) \neq \emptyset$ only if $|q_i| = |q_k|$, and $\Phi^{-1} = \Phi$, hence, $\Phi$ is a graded isomorphism. It remains to show $\Phi$ is a chain map. For $i \leq k$, $\Phi(q_i) = q_i$ and $d^{+}_1 q_i = d^{-}_1 q_i$, so $\Phi$ is a chain map. Consider $i > k$. Then:

\begin{align*}
\Phi \circ d^{+}_1 (q_i) &= \Phi \left ( \sum_{j<i} \# \mathcal{M}^{+}(i; j) q_j \right ) \nonumber \\
&= \sum_{k < j < i} \# \mathcal{M}^{+}(i; j) q_j + \sum_{j < k < i} \# \mathcal{M}^{+}(i; j) q_j \nonumber \\
&+ \left [\# \mathcal{M}^{+}(i; k) + \sum_{k<j<i} \# \mathcal{M}^{+}(i; j) \# \mathcal{M}^{0}(j ; k) \right ] q_k \nonumber
\end{align*}

and

\begin{align}
d^{-}_1 \circ \Phi (q_i) &= d^{-}_1 (q_i + \mathcal{M}^{0}(i; k) q_k) \nonumber \\
&= \# \mathcal{M}^{-}(i; k) q_k + \sum_{k < j < i} \# \mathcal{M}^{-}(i; j) q_j \nonumber \\
&+ \sum_{j < k < i} \left [ \# \mathcal{M}^{-}(i; j) + \# \mathcal{M}^{-}(k; j) \# \mathcal{M}^{0}(i ; k) \right ] q_j. \nonumber 
\end{align}

Thus, $\Phi \circ d^{+}_1 (q_i) = d^{-}_1 \circ \Phi (q_i)$ holds if:

\begin{enumerate}
	\item[A.] For $k < j < i$, $\# \mathcal{M}^{+}(i; j) = \# \mathcal{M}^{-}(i; j)$;
	\item[B.] For $j < k < i$, $\# \mathcal{M}^{+}(i; j) = \# \mathcal{M}^{-}(i; j) + \# \mathcal{M}^{-}(k; j) \# \mathcal{M}^{0}(i ; k)$; and 
	\item[C.] For $j=k$, 
	
\begin{equation}
\label{eq:move-8-3}	
	\# \mathcal{M}^{-}(i; k) = \# \mathcal{M}^{+}(i; k) + \sum_{k<j<i} \# \mathcal{M}^{+}(i; j) \# \mathcal{M}^{0}(j ; k).
\end{equation}

\end{enumerate}

For $k < j < i$, $\mathcal{M}^{+}(i; j) = \mathcal{M}^{-}(i; j)$ as sets of chord paths since the handeslide marks of $\sMCS^{+}$ and $\sMCS^{-}$ are identical between $q_i$ and $q_j$. Thus, A. follows. For $j < k < i$, there is injection of $\mathcal{M}^{-}(i; j)$ into $\mathcal{M}^{+}(i; j)$ whereby the image of $\Lambda \in \mathcal{M}^{-}(i; j)$ jumps along the same handleslide marks as $\Lambda$, including possibly $\alpha$. The set $\mathcal{M}^{+}(i; j) \setminus \mathcal{M}^{-}(i; j)$ consists of chord paths that jump along $\alpha$ so that the chord after the jump is between strands $m$ and $m+1$. This set is in bijection with $\mathcal{M}^{-}(k; j) \times \mathcal{M}^{0}(i ; k)$ by the map described in Figure~\ref{f:M8-bijection}. Thus, B. follows.

\begin{figure}[t]
\labellist
\small\hair 2pt
\pinlabel {$\beta$} [br] at 193 34
\pinlabel {$\alpha$} [tl] at 354 30
\pinlabel {$q_k$} [tl] at 60 10
\pinlabel {$q_k$} [tl] at 190 10
\pinlabel {$q_k$} [tl] at 390 10
\endlabellist
\centering
\includegraphics[scale=.55]{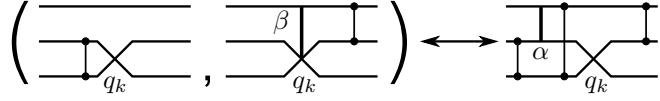}
\caption[]{The correspondence between $(\Lambda, \Omega) \in \mathcal{M}^{-}(k; j) \times \mathcal{M}^{0}(i ; k)$ and a chord path in $\mathcal{M}^{+}(i; j) \setminus \mathcal{M}^{-}(i; j)$.}
\label{f:M8-bijection}
\end{figure}

Suppose $j=k$. Most chord paths in $\mmc^+(i;k)$ correspond in a canonical way to a chord path in $\mmc^-(i;k)$, and vice versa. We define $\widetilde{\mathcal{M}}(i ; k)$ to be the subset of $ \mmc^+(i;k) \cup \mmc^{-}(i;k)$ consisting of $\Lambda = (\lambda_1, \hdots \lambda_n)$ satisfying: 

\begin{enumerate}
	\item $\Lambda \in \mmc^+(i;k)$, $\lambda_n = (x_{p+1}, [i,m]), i<l,$ and $\langle d^+_{p} e_i, e_l \rangle =1$; see Figure~\ref{f:M8-exceptions}~(a);
	\item $\Lambda \in \mmc^-(i;k)$, $\lambda_n = (x_{p+1}, [i,m]), i<l,$ and $\langle d^+_{p} e_i, e_m \rangle =0$; see Figure~\ref{f:M8-exceptions}~(b); or 
	\item $\Lambda \in \mmc^-(i;k)$, $\lambda_n = (x_{p+1}, [m+1,j])$, and $\Lambda$ jumps along $\alpha$; see Figure~\ref{f:M8-exceptions}~(c).
\end{enumerate}
The set $\widetilde{\mathcal{M}}(i ; k)$ consists of chord paths in $\mmc^+(i;k)$ and $\mmc^-(i;k)$ that do not correspond in a canonical way to another chord path after the MCS move. Hence, $\# \widetilde{\mathcal{M}}(i ; k) = \# \mmc^-(i;k) - \# \mmc^+(i;k)$ and we can rewrite equation~(\ref{eq:move-8-3}) as 

\begin{equation}
\label{eq:iso-8}
\# \widetilde{\mathcal{M}}(i ; k) = \sum_{k<j<i} \# \mathcal{M}^{+}(i; j) \# \mathcal{M}^{0}(j ; k).
\end{equation}

\begin{figure}[t]
\labellist
\small\hair 2pt
\pinlabel {(a)} [tl] at 61 16
\pinlabel {(b)} [tl] at 225 16
\pinlabel {(c)} [tl] at 400 16
\pinlabel {$i$} [br] at 11 86
\pinlabel {$l$} [br] at 11 61
\pinlabel {$m$} [br] at 11 36
\pinlabel {$m+1$} [br] at 11 11
\pinlabel {$j$} [br] at 355 11
\endlabellist
\centering
\includegraphics[scale=.55]{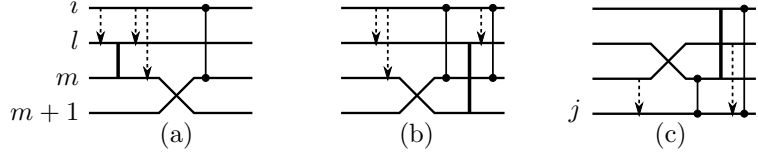}
\caption[]{The exceptional chord paths of $\mmc^+(i;k)$ and $\mmc^{-}(i;k)$ in $\widetilde{\mathcal{M}}(i ; k)$.}
\label{f:M8-exceptions}
\end{figure}

We will verify equation~(\ref{eq:iso-8}) by producing a graph $G=(V,E)$ as in the claim below. The resulting graph is a compact topological $1$-dimensional manifold with boundary points corresponding to $$\widetilde{\mathcal{M}}(i ; k) \cup \left [\bigcup_{k<j<i} \mathcal{M}^{-}(i; j) \times \mathcal{M}^{0}(j; k) \right ].$$ The boundary points are naturally paired in such a manifold, hence, the construction of $G$ verifies equation~(\ref{eq:iso-8}) and finishes the lemma. 

The build-up to the next Claim has been lengthy, so we take a moment to recall the chord paths corresponding to vertices in $G$. The sets $ \mmc^{\pm}(*;*)$ consist of chords paths in the MCSs $\sMCS^{\pm}$. The set $\widetilde{\mathcal{M}}(i ; k)$ is a certain subset of $ \mmc^+(i;k) \cup \mmc^{-}(i;k)$. The set $\mathcal{M}^{0}(*;*)$ consists of chord paths in the intermediary $\sMCS^0$ between MCSs $\sMCS^+$ and $\sMCS^-$; see Figure~\ref{f:move-8-15} and the discussion preceding Condition~$(3_8)$. The set $\widehat{\mathcal{M}}(i ; k)$ of interior vertices is defined after the claim. 

\begin{claim}
There exists a graph $G = (V, E)$ with vertex set $$V = \widehat{\mathcal{M}}(i ; k) \cup \widetilde{\mathcal{M}}(i ; k) \cup \left [ \bigcup_{k<j<i} \mathcal{M}^{-}(i; j) \times \mathcal{M}^{0}(j; k) \right ]$$ satisfying:
\begin{enumerate}
	\item[A.] Every element of $\widehat{\mathcal{M}}(i ; k)$ has valence 2; and
	\item[B.] Every element of $V \setminus \widehat{\mathcal{M}}(i ; k)$ has valence 1.
\end{enumerate}
\end{claim}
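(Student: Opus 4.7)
My plan is to adapt the combinatorial gluing argument of the Main Structural Lemma (Lemma~\ref{lem:MSL}) to the MCS move 8 setting. Define the interior vertex set $\widehat{\mathcal{M}}(i;k)$ to be the set of sequences of chords originating at $q_i$ and satisfying Condition~$(3_8)$ with respect to the intermediary $\sMCS^{0}$, such that the sequence contains exactly one exceptional step in the sense of Definition~\ref{defn:exceptional-step}. A dimension count modeled on Proposition~\ref{prop:CPDegree} shows that $\widehat{\mathcal{M}}(i;k) \neq \emptyset$ only if $|q_i| = |q_k|+1$, which is the degree case relevant to equation~(\ref{eq:iso-8}).

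The edge set $E$ is constructed by analyzing how the exceptional step may be moved through the elementary tangles of $\sfront$. For exceptional steps located outside the tangle $T$, the enumeration of types and pairings is identical to that of Lemma~\ref{lem:MSL}: the pairings $\mathbf{L}\mathrm{C}1 \leftrightarrow \mathbf{R}\mathrm{C}1$ and $\mathbf{R}\mathrm{C}2 \leftrightarrow \text{broken chord paths}$ arise from crossings, and the six pairings of handleslide types H1 through H6 arise from the handleslides of $\sMCS^{0}$ lying outside $T$. Since $q_i$ and $q_j$ lie to the right of $T$ for any $k < j < i$, one has $\mathcal{M}^{-}(i;j) = \mathcal{M}^{0}(i;j)$, so the $\mathbf{R}\mathrm{C}2$ edges produce exactly one edge for each element of $\mathcal{M}^{-}(i;j)\times \mathcal{M}^{0}(j;k)$, establishing the $1$-valence of the broken chord paths in $V$.

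The main work lies in the case analysis of exceptional steps positioned within $T$, adjacent to the crossing $q_k$, the handleslide $\beta$ of $\sMCS^{0}$, or the terminating chord of Condition~$(3_8)$. Using the facts that the differentials of $\sMCS^{0}$ agree with those of $\sMCS^{+}$ and $\sMCS^{-}$ outside of $T$ and that each local chain complex satisfies $d^{2}=0$, I expect to produce pairings covering all configurations except three residual ones. Applying the handleslide relations from Definition~\ref{def:MCS}~(5d) to $\alpha$ and $\beta$, these three unpaired configurations will be shown to correspond bijectively with the three types (a), (b), and (c) displayed in Figure~\ref{f:M8-exceptions}, thus accounting for the $1$-valence of the vertices in $\widetilde{\mathcal{M}}(i;k)$. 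The main obstacle is precisely this step: ensuring the three residual configurations of $\widehat{\mathcal{M}}(i;k)$ match exactly, without off-by-one or parity mismatches, with the three types specifying $\widetilde{\mathcal{M}}(i;k)$.

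Once the edge set has the claimed valence properties, $G$ is a compact topological $1$-manifold whose boundary vertices come in pairs modulo $2$, yielding equation~(\ref{eq:iso-8}) and completing the verification that $\Phi$ is a chain map for MCS move 8.
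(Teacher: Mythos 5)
Your proposal follows essentially the same route as the paper's proof: the same interior vertex set $\widehat{\mathcal{M}}(i;k)$ (your phrasing of the chord-path conditions via $\sMCS^{0}$ rather than $\sMCS^{+}$ is immaterial, since these paths terminate just to the right of $q_k$, where the two sets of handleslide marks agree), the same MSL-style pairings for exceptional steps away from $T$, the same gluing of broken chord paths at the crossings $q_j$, and the same matching of the residual configurations near $q_k$ with the three types defining $\widetilde{\mathcal{M}}(i;k)$ (with types (a) and (b) attaching to a common interior vertex). The case analysis you flag as the main obstacle is exactly the part the paper also handles only by reference to its figures and "leaves to the reader," so there is no substantive divergence or gap beyond what the paper itself leaves implicit.
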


We define the set of interior vertices $\widehat{\mathcal{M}}(i ; k)$ by $\Lambda \in \widehat{\mathcal{M}}(i ; k)$ if and only if $\Lambda = (\lambda_1, \hdots, \lambda_n)$ is a finite sequence of chord paths so that:
\begin{enumerate}
	\item $\Lambda$ satisfies Definition~\ref{def:ChordPath} (1), (2), and (4) for the MCS $\sMCS^{+}$ except that for a single value of $1 \leq r < n$, $\lambda_r$ and $\lambda_{r+1}$ violate Definition~\ref{def:ChordPath} (4) and instead $\Lambda$ has an exceptional step between $\lambda_r$ and $\lambda_{r+1}$ as defined in Definition~\ref{defn:exceptional-step};
	\item $\Lambda$ originates at $q_i$; and
	\item $\Lambda$ satisfies Condition ($3_8$).
\end{enumerate}

In short, $\Lambda \in \widehat{\mathcal{M}}(i ; j)$ is a chord path in $\sMCS^{+}$ from $q_i$ that jumps along a single fiber-wise gradient flowline and satisfies Condition ($3_8$).

\begin{figure}[t]
\centering
\includegraphics[scale=.55]{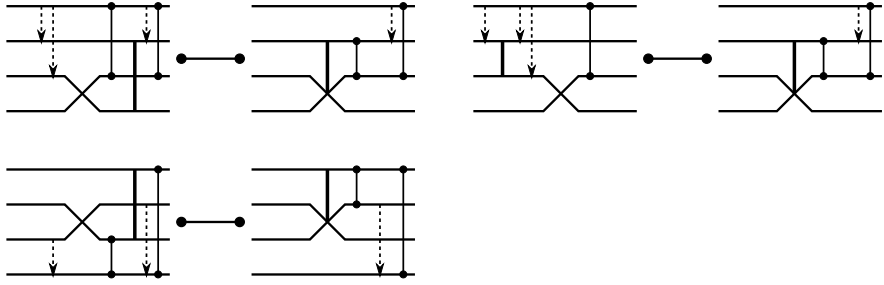}
\caption[]{The edges between vertices in $\widetilde{\mathcal{M}}(i ; k)$ and vertices in $\widehat{\mathcal{M}}(i ; k)$.}
\label{f:M8-edge-B}
\end{figure}

Edges at valence 1 vertices are defined as follows. Suppose $\Lambda = (\lambda_1, \hdots, \lambda_n) \in \widetilde{\mathcal{M}}(i ; k)$. We associate to each of the three possible chord paths in $\widetilde{\mathcal{M}}(i ; k)$ a chord path in $\widehat{\mathcal{M}}(i ; k)$ as described in Figure~\ref{f:M8-edge-B} and introduce an edge between their vertices. Note that the vertices of $\widetilde{\mathcal{M}}(i ; k)$ corresponding to (a) and (b) in Figure~\ref{f:M8-exceptions} are connected by an edge to the same chord path in $\widehat{\mathcal{M}}(i ; k)$. Hence, these two boundary vertices are paired by the graph. Given $$\Lambda \times \Omega = (\lambda_1, \ldots, \lambda_{n_1}) \times (\omega_1, \ldots, \omega_{n_2}) \in  \mathcal{M}^{-}(i; j) \times \mathcal{M}^{0}(j; k),$$ we introduce an edge whose other end corresponds to the chord path in $\widehat{\mathcal{M}}(i ; k)$ obtained from gluing $\Lambda$ and $\Omega$ as $(\lambda_1, \ldots, \lambda_{n_1}, \tau, \omega_1, \ldots, \omega_{n_2})$. The additional chord $\tau$ is obtained from pushing $\lambda_{n_1}$ through the crossing $q_j$, with an exceptional step occurring between $\tau$ and $\omega_1$; see Figure~\ref{f:M8-edge-A}. 

\begin{figure}[t]
\labellist
\small\hair 2pt
\pinlabel {$q_j$} [tl] at 60 19
\pinlabel {$q_j$} [tl] at 189 19
\pinlabel {$q_j$} [tl] at 380 19
\endlabellist
\centering
\includegraphics[scale=.55]{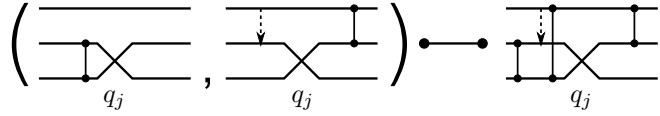}
\caption[]{The edge between a vertex in $\mathcal{M}^{-}(i; j) \times \mathcal{M}^{0}(j; k)$ and a vertex in $\widehat{\mathcal{M}}(i ; k)$.}
\label{f:M8-edge-A}
\end{figure}

The internal edges of $E$ are determined by the way an exceptional step can be moved around in $\sMCS^+$ between $q_i$ and $q_k$. For vertices in $\widehat{\mathcal{M}}(i ; k)$, the exceptional step is assigned both a ``left type'' and ``right type'' depending on the tangle appearing to the left or, in the latter case, the right of the exceptional step and the way in which the chord sequence passes through this tangle. The assignment of edges to the vertices in $\widehat{\mathcal{M}}(i ; k)$ is based on the type of the exceptional step. In fact, the construction of these edges and the proof that the vertices of $\widehat{\mathcal{M}}(i ; j)$ are valence 2 is essentially identical to the proof of Lemma~\ref{lem:MSL}. We leave the details to the reader.
\end{proof}

\begin{proof}[Proof of Theorem~\ref{thm:linear-iso} for MCS Move 15]

We define $\Phi : \aac_1(\sMCS^+) \to \aac_1(\sMCS^-)$ to be the linear extension of the map on $Q(\sfront)$ given by $\Phi(q_i) = q_i + \sum_{j<i} \# \mathcal{M}^{0}(i,j) q_j$. The set $\mathcal{M}^{0}(i; j) \neq \emptyset$ only if $|q_i| = |q_j|$, and $\Phi^{-1} = \Phi$, hence, $\Phi$ is a graded isomorphism. It remains to show $\Phi$ is a chain map. Note, 

\begin{align*}
\Phi \circ d^{+}_1 (q_i) &= \Phi \left ( \sum_{j<i} \# \mathcal{M}^{+}(i; j) q_j \right ) \nonumber \\
&= \sum_{j < i} \left [ \# \mathcal{M}^{+}(i; j) q_j + \sum_{l < j} \# \mathcal{M}^{+}(i; j) \mmc^{0}(j, l) q_l \right ] \nonumber 
\end{align*}

and

\begin{align*}
d^{-}_1 \circ \Phi (q_i) &= d^{-}_1 \left (q_i + \sum_{j<i} \mathcal{M}^{0}(i; j) q_j \right ) \nonumber \\
&= \sum_{j < i} \left [ \# \mathcal{M}^{-}(i; j) q_j + \sum_{l < j} \# \mathcal{M}^{0}(i; j) \mmc^{-}(j, l) q_l \right ] \nonumber 
\end{align*}

Thus, $\Phi \circ d^{+}_1 (q_i) = d^{-}_1 \circ \Phi (q_i)$ if:

\begin{enumerate}
	\item[A.] For $T < j < i$ or $j < i < T$, $\# \mathcal{M}^{+}(i; j) = \# \mathcal{M}^{-}(i; j)$; and
	\item[B.] For $j < T < i$, $$\# \mmc^{+}(i,j) + \sum_{j<l<T} \# \mmc^{+}(i,l) \# \mmc^{0}(l,j) = \# \mmc^{-}(i,j) + \sum_{j<T<l} \# \mmc^{0}(i,l) \# \mmc^{-}(l,j).$$
\end{enumerate}

For $T < j < i$ or $j < i < T$, $\mathcal{M}^{+}(i; j) = \mathcal{M}^{-}(i; j)$ as sets of chord paths since the handeslide marks of $\sMCS^{+}$ and $\sMCS^{-}$ are identical between $q_i$ and $q_j$, so A. follows. For $j < T < i$, the MCSs $\sMCS^{+}$ and $\sMCS^{-}$ differ only in the handleslide marks of $\sMCS^{-}$ in $T$, hence, $\mmc^{+}(i, j)$ naturally injects into $\mmc^{-}(i,j)$. Thus, $\widetilde{\mathcal{M}}(i ; j) = \mmc^{-}(i,j) \setminus \mmc^{+}(i,j)$ denotes the chord paths in $\sMCS^{-}$ that jump along handleslide marks in $T$ and $\# \widetilde{\mathcal{M}}(i ; j) = \# \mmc^{-}(i,j) - \# \mmc^{+}(i,j)$. In fact, it is easy to see that $\Lambda \in \widetilde{\mathcal{M}}(i ; j)$ jumps along exactly one handleslide in $T$. In the case of $j < T < i$, we can rewrite B. above as:

\begin{equation}
\label{eq:iso-15}
\# \widetilde{\mathcal{M}}(i ; j) = \sum_{j<l<T} \# \mmc^{+}(i,l) \# \mmc^{0}(l,j) + \sum_{j<T<l} \# \mmc^{0}(i,l) \# \mmc^{-}(l,j).
\end{equation}

We will verify equation~(\ref{eq:iso-15}) by producing a graph $G=(V,E)$ as in the claim below. The resulting graph is a compact topological $1$-dimensional manifold with boundary points corresponding to $$\widetilde{\mathcal{M}}(i ; j) \cup \left [ \bigcup_{j<T<l<i} \mathcal{M}^{+}(i; l) \times \mathcal{M}^{0}(l; j) \right ] \cup \left [ \bigcup_{j<l<T<i} \mathcal{M}^{0}(i; l) \times \mathcal{M}^{-}(l; j) \right ].$$ The boundary points are naturally paired in such a manifold, hence, the construction of $G$ verifies equation~(\ref{eq:iso-15}) and finishes the lemma. The set $\widehat{\mathcal{M}}(i ; j)$ of interior vertices is defined after the claim. 

\begin{claim}
For $T < j < i$ fixed, there exists a graph $G = (V, E)$ with vertex set 
\begin{align*}
V &= \widehat{\mathcal{M}}(i ; j) \cup \widetilde{\mathcal{M}}(i ; j) \nonumber \\
& \cup \left [ \bigcup_{j<T<l<i} \mathcal{M}^{+}(i; l) \times \mathcal{M}^{0}(l; j) \right ] \nonumber \\
& \cup \left [ \bigcup_{j<l<T<i} \mathcal{M}^{0}(i; l) \times \mathcal{M}^{-}(l; j) \right ] \nonumber
\end{align*}
satisfying:
\begin{enumerate}
	\item[A.] Every element of $\widehat{\mathcal{M}}(i ; k)$ has valence 2; and
	\item[B.] Every element of $V \setminus \widehat{\mathcal{M}}(i ; k)$ has valence 1.
\end{enumerate}
\end{claim}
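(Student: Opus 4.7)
The plan is to emulate the combinatorial gluing argument used in the proof of $(d_1)^2=0$ (Lemma~\ref{lem:MSL}) and in the MCS move~8 case above. I take $\widehat{\mathcal{M}}(i;j)$ to consist of those finite sequences of chords from $q_i$ to $q_j$ that satisfy the conditions of Definition~\ref{def:ChordPath} together with Condition~($4_{15}$) for the intermediary marked front $\sMCS^{0}$, except that at exactly one place $\lambda_{r}$ and $\lambda_{r+1}$ are related by an exceptional step in the sense of Definition~\ref{defn:exceptional-step}. A degree computation analogous to Proposition~\ref{prop:CPDegree}, with the exceptional step contributing an additional $-1$ shift, shows this set is empty unless $|q_i|=|q_j|$.

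Next I describe the edges incident to the valence-one vertices. For a pair $(\Lambda,\Omega) \in \mathcal{M}^{+}(i;l) \times \mathcal{M}^{0}(l;j)$ with $T<l<i$, I glue at $q_l$ exactly as in Figure~\ref{f:edge-RC2-broken-chord-paths} and Figure~\ref{f:M8-edge-A}: push the terminal chord of $\Lambda$ through $q_l$ and insert an exceptional step between it and the initial chord of $\Omega$. A symmetric construction handles $(\Lambda,\Omega) \in \mathcal{M}^{0}(i;l) \times \mathcal{M}^{-}(l;j)$ with $j<l<T$. For $\Lambda \in \widetilde{\mathcal{M}}(i;j)$, the single jump of $\Lambda$ along one of the handleslides $h_{u_k,j}$ or $h_{i,v_k}$ of $\sMCS^{-}$ in $T$ is converted into a jump along $\alpha$ at $x=\bar{x}$ together with an exceptional step recording the gradient trajectory in $(C_p,d_p)$ used to place that handleslide when MCS move~15 was applied; the result is the paired element of $\widehat{\mathcal{M}}(i;j)$.

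Interior edges are then produced by classifying the exceptional step of a given vertex in $\widehat{\mathcal{M}}(i;j)$ by its left and right type and pairing types to give one edge per side, exactly as in Lemma~\ref{lem:MSL}. The only cases that are new are exceptional steps adjacent to the mark $\alpha$ at $x=\bar{x}$ and exceptional steps adjacent to the handleslides of $T$ in $\sMCS^{-}$. Both are handled by a direct adaptation of the handleslide analysis summarized in Figure~\ref{f:edge-H1-to-H6}, with $\alpha$ and the $\sMCS^{-}$-handleslides playing the role of ordinary handleslide marks. The cases in which moving the exceptional step would destroy the required jump along $\alpha$ are precisely those in which the vertex is paired with an element of $\widetilde{\mathcal{M}}(i;j)$, reproducing the boundary edges of the previous paragraph.

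The main obstacle will be verifying that this bookkeeping balances: one must check that exceptional steps pushed through $\alpha$ or across the handleslides inserted in $T$ of $\sMCS^{-}$ are matched in pairs, and that the broken chord path boundary arising at crossings $q_l$ with $l>T$ or $l<T$ accounts for exactly those chord paths in $\widetilde{\mathcal{M}}(i;j)$ that are not already paired by interior edges. The algebraic identity that underwrites the count is $h \circ d_p = d_p \circ h$ from Section~\ref{sec:MCS-graphic}, which guarantees that the number of handleslides inserted in $T$ matches the number of once-broken gradient trajectories at $x=x_p$ supported between $e_l$ and $e_m$. Once the edge set is in place, each vertex of $\widehat{\mathcal{M}}(i;j)$ has valence $2$ and each vertex of $V \setminus \widehat{\mathcal{M}}(i;j)$ has valence $1$, so $G$ is a disjoint union of circles and closed intervals, $\#(V \setminus \widehat{\mathcal{M}}(i;j)) \equiv 0 \pmod{2}$, and equation~(\ref{eq:iso-15}) follows.
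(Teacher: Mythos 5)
Your construction is essentially identical to the paper's: you define $\widehat{\mathcal{M}}(i;j)$ the same way (chord paths in $\sMCS^{0}$ satisfying Condition $(4_{15})$ with a single exceptional step), attach boundary edges by gluing broken chord paths at $q_l$ with an inserted exceptional step and by breaking the $\sMCS^{-}$ handleslide in $T$ into a fiberwise gradient trajectory plus the mark $\alpha$, and produce interior edges via the left-type/right-type analysis of Lemma~\ref{lem:MSL}, which is exactly the level of detail the paper supplies. One slip worth correcting: your degree remark is wrong --- since a jump along $\alpha$ raises the Maslov difference of the chord by $1$ (as $|e_l|=|e_m|-1$) while the exceptional step lowers it by $1$, the set $\widehat{\mathcal{M}}(i;j)$ is nonempty only when $|q_j|=|q_i|-1$ (it is $\mathcal{M}^{0}(i;j)$, with no exceptional step, that requires $|q_i|=|q_j|$); as stated, your remark would force the interior vertex set to be empty in exactly the degree where the boundary vertices live, contradicting the edges you then construct.
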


For a fixed $q_i \in Q(\sfront)$, we define the set $\widehat{\mathcal{M}}(i ; j)$ by $\Lambda \in \widehat{\mathcal{M}}(i ; j)$ if and only if $\Lambda = (\lambda_1, \hdots, \lambda_n)$ is a finite sequence of chord paths so that:
\begin{enumerate}
	\item $\Lambda$ satisfies Definition~\ref{def:ChordPath} (1) - (4) for $\sMCS^{0}$ except that for a single value of $1 \leq r < n$, $\lambda_r$ and $\lambda_{r+1}$ violate Definition~\ref{def:ChordPath} (4) and instead $\Lambda$ has an exceptional step between $\lambda_r$ and $\lambda_{r+1}$ as defined in Definition~\ref{defn:exceptional-step};
	\item $\Lambda$ satisfies Condition ($4_{15}$); and
	\item $\Lambda$ originates at $q_i$ and terminates at $q_j$.
\end{enumerate}

In short, $\Lambda \in \widehat{\mathcal{M}}(i ; j)$ is a chord path in $\sMCS^{0}$ from $q_i$ and $q_j$ that jumps along $\alpha$ in $T$ and also jumps along a single fiber-wise gradient flowline, including possibly a gradient flowline in the chain complex $(C_{\bar{x}}, d_{\bar{x}})$.

\begin{figure}[t]
\centering
\includegraphics[scale=.55]{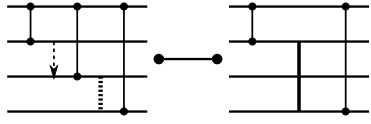}
\caption[]{The edge between a vertex in $\widehat{\mathcal{M}}(i ; j)$ and a vertex in $\widetilde{\mathcal{M}}(i ; j)$.}
\label{f:M15-edge-A}
\end{figure}

We begin by constructing the edges from valence 1 vertices. Suppose $\Lambda = (\lambda_1, \hdots, \lambda_n) \in \widetilde{\mathcal{M}}(i ; j)$. The chord path $\Lambda$ jumps along a handleslide mark $\beta$ of $\sMCS^{-}$ in the tangle $T$. Suppose $\beta$ is the handleslide between strands $a$ and $m$ with $a < l$, then $\langle d e_a, e_l \rangle = 1$ and, in $\sMCS^{0}$, $\beta$ can be uniquely broken into a gradient flowline between $a$ and $l$ and the mark $\alpha$. We introduce an edge whose other end corresponds to the chord path in $\widehat{\mathcal{M}}(i ; j)$ that jumps along the gradient flowline between $a$ and $l$ and also jumps along $\alpha$; see Figure~\ref{f:M15-edge-A}.

\begin{figure}[t]
\labellist
\small\hair 2pt
\pinlabel {$q_l$} [tl] at 60 5
\pinlabel {$q_l$} [tl] at 188 5
\pinlabel {$q_l$} [tl] at 380 5
\endlabellist
\centering
\includegraphics[scale=.55]{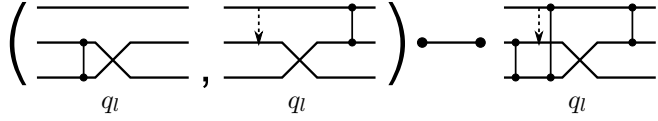}
\caption[]{This figure represents both: (a) the edge between a vertex in $\mathcal{M}^{+}(i; l) \times \mathcal{M}^{0}(l; j)$ and a vertex in $\widehat{\mathcal{M}}(i ; k)$ for $j<T<l<i$ and (b) the edge between a vertex in $\mathcal{M}^{0}(i; l) \times \mathcal{M}^{-}(l; j)$ and a vertex in $\widehat{\mathcal{M}}(i ; k)$ for $j<l<T<i$.}
\label{f:M15-edge-B}
\end{figure}

Suppose $j<T<l<i$ or $j<l<T<i$. Given $$\Lambda \times \Omega = (\lambda_1, \ldots, \lambda_{n_1}) \times (\omega_1, \ldots, \omega_{n_2}) \in  \mathcal{M}^{+}(i; l) \times \mathcal{M}^{0}(l; j),$$ we introduce an edge whose other end corresponds to the chord path in $\widehat{\mathcal{M}}(i ; j)$ obtained from gluing $\Lambda$ and $\Omega$ as $(\lambda_1, \ldots, \lambda_{n_1}, \tau, \omega_1, \ldots, \omega_{n_2})$. The additional chord $\tau$ is obtained from pushing $\lambda_{n_1}$ through the crossing $q_l$, with an exceptional step occurring between $\tau$ and $\omega_1$; see Figure~\ref{f:M15-edge-B}. 

The internal edges of $E$ between elements of $\widehat{\mathcal{M}}(i ; j)$ are determined by the way an exceptional step can be moved around in $\sMCS^{0}$. The edge assignments follow the same ``left type/right type'' argument used in Lemma~\ref{lem:MSL} and in the proof of MCS move 8. We leave the details to the reader.
\end{proof}

Having considered each MCS move depicted in Figures~\ref{f:MCS-equiv}~and~\ref{f:MCS-explosion}, the proof of Theorem~\ref{thm:linear-iso} is now complete.  
\end{proof}

\section{Comparing chord paths with difference flow trees}
\label{ch:grad-staircase}

In this section we return to the case where our MCS arises from a generating family. We give an alternate description of chord paths as certain piece-wise gradient trajectories along the fiber product of $L$ with itself. The discontinuities correspond to jumps along fiberwise gradient trajectories. We give a precise conjectural interpretation for these broken gradient trajectories as limits of the difference flow trees of Section~\ref{ch:Gen-fams} when the metric is rescaled along the fibers. In this picture a convex corner in a chord path corresponds to an internal vertex in a difference flow tree.

\subsection{Gradient staircases}

Let $H: \R\times \R^K \rightarrow \R$, $h_x = H(x, \cdot)$ denote a family of functions. Our primary interest is the case when $H$ is the difference function of a given generating family $F$. Choose a family of metrics $g_F = (g_F)_x$ on the fibers $\R^K$, and let $g_\R$ denote a metric on $\R$. The sum $g := g_\R + g_F$ provides a metric on $\R\times\R^K$ so that the components of the Cartesian product are orthogonal.

We consider the behavior of gradient trajectories between fixed critical points $p,q \in \mathit{Crit}(H)$ as the metric is rescaled along the fibers towards a degenerate limit. Specifically, put 
\begin{equation} \label{eq:Deform}
g_s := g_\R + s(g_F), \quad 1 \geq s >0,
\end{equation}
 and let $ \nabla_s H$ denote the gradient of $H$ with respect to $g_s$. If $\nabla_1H (x, e) = (\alpha, \beta) \in T_{(x,e)}(\R \times \R^K) \cong T_x\R \times T_e\R^K$ then $\displaystyle \nabla_s H (x,e) = \left ( \alpha,\frac{1}{s} \beta \right )$, so the deformation of the metric results in an exaggeration of the fiber component of $\nabla_1H$ as $s$ approaches $0$. 

The effect on gradient trajectories is illustrated by the following example. Let $H : \rr \times \rr \to \rr$ by 
\[ \displaystyle
H(x,e) = \frac{-x^3}{3} + \frac{x^2}{2} + \frac{-e^3}{3} + \frac{e^2}{2}.
\]
We take our initial metric to be Euclidean so that 
\[
-\nabla_s H = \left(x^2 -x, \frac{1}{s} (e^2 -e) \right). 
\]
 The fiber critical set $S_H$ consists of the horizontal lines $e=0$ and $e=1$. For each value of $s$ there is a unique flow line $\gamma_s : \R \rightarrow \R\times \R$ of $-\nabla_s H $ with $\gamma_s(0) = {\bf v}_0 = \left (\frac{1}{2}, \frac{1}{2} \right )$. Each $\gamma_s$ is contained within the square $[0,1]\times[0,1]$ and limits to $(1,1)$ and $(0,0)$ as $t \rightarrow \pm \infty$. As $s \rightarrow 0$ the portion of $\gamma_s$ near ${\bf v}_0$ becomes more and more vertical and travels past ${\bf v}_0$ with ever increasing speed. On intervals $(-\infty, -\epsilon],$ (resp. $[\epsilon, +\infty) \,$), $\epsilon >0$, $\gamma_s$ uniformly approaches the trajectories along the portion of $S_H$ belonging to $e=1$ (resp. $e=0$); see Figure~\ref{fig:Flow-Line}. The ``limit'' as $s\rightarrow 0$ of the $\gamma_s$ may therefore be viewed as a piecewise gradient trajectory within the fiber critical set $S_H$ with a jump occurring at $t=0$ along the fiber gradient trajectory of $H (\frac{1}{2}, \cdot ): \rr \to \rr$ from $(\frac{1}{2}, 1)$ to $(\frac{1}{2}, 0)$. 

\begin{figure}
\centerline{\includegraphics[scale=.35]{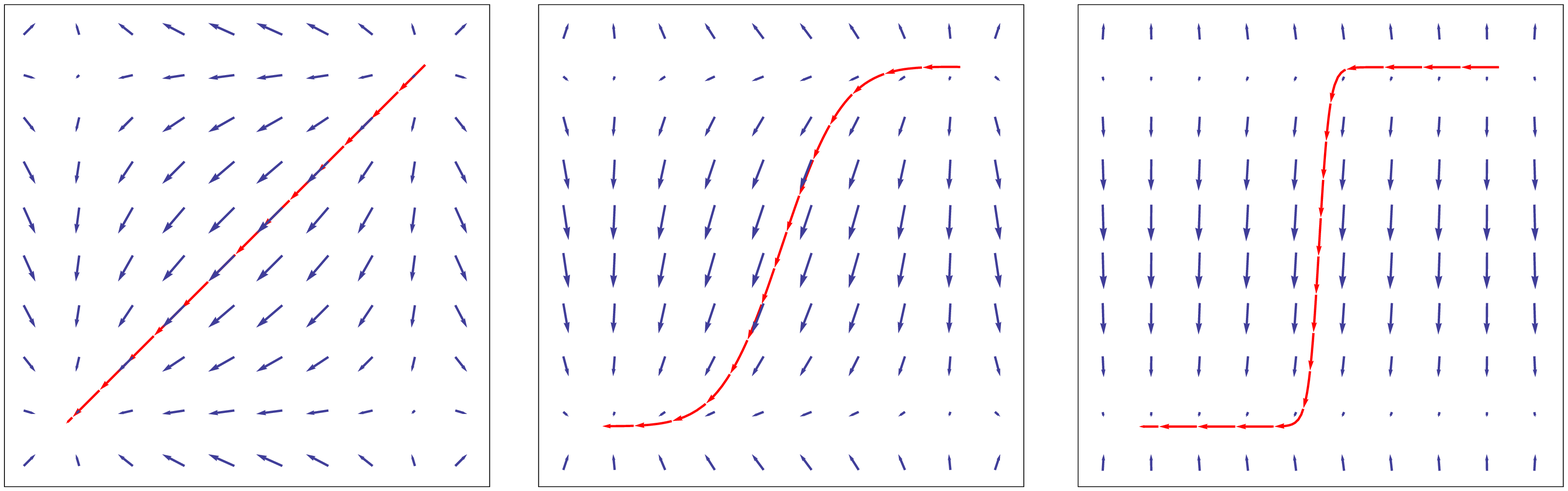}}
\caption{Behavior of the vector fields $-\nabla_s H$ and the trajectories $\gamma_s$ as $s$ approaches $0$.}
\label{fig:Flow-Line}
\end{figure} 
 
With the notion of a {\it gradient staircase} (Definition~\ref{def:GradStair} below) we aim to encode this type of limiting behavior. First, we require a
preparatory lemma regarding the gradient of the restriction $H|_{S_H}$ as $s\rightarrow 0$. Let $j: S_H \hookrightarrow \R \times \R^K$ denote the inclusion so that $j^*g_s$ denotes the restriction of $g_s$ to $S_H$.

\begin{lemma} At each point $(x, e) \in S_H$, $\lim_{s\rightarrow 0} \nabla_{j^*g_s} (H|_{S_H})$ exists.
\end{lemma}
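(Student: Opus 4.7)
The plan is to reduce the question to a pointwise linear-algebra calculation in the one-dimensional tangent space $T_{(x,e)} S_H$. The full-rank assumption on $[\partial_x\partial_e H,\, \partial_e^2 H]$ (inherited from the analogous hypothesis on $F$) guarantees that $S_H$ is a $1$-manifold, so $T_{(x,e)} S_H$ is spanned by a single nonzero vector $v = (v_x, v_e)$. I would fix such a $v$ and express the gradient as $\nabla_{j^* g_s}(H|_{S_H}) = \lambda_s \, v$, reducing the problem to showing that the scalar $\lambda_s$ has a limit.

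The next step is to evaluate the two ingredients needed to pin down $\lambda_s$. Because $\partial_e H(x,e) = 0$ on $S_H$, the chain rule gives
\[
d(H|_{S_H})(v) \;=\; dH(v) \;=\; \partial_x H(x,e) \cdot v_x ,
\]
and from $g_s = g_\R + s\, g_F$ one has
\[
j^* g_s(v,v) \;=\; g_\R(v_x, v_x) + s\, g_F(v_e, v_e).
\]
Hence
\[
\nabla_{j^* g_s}(H|_{S_H}) \;=\; \frac{\partial_x H(x,e)\cdot v_x}{\,g_\R(v_x, v_x) + s\, g_F(v_e, v_e)\,}\; v .
\]

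To conclude I would separate two cases depending on whether the projection of $S_H$ to $\R$ is a submersion at $(x,e)$. If $v_x \neq 0$ (the Morse fiber-critical case, where one may solve $\partial_e^2 H \cdot v_e = -v_x\, \partial_x\partial_e H$ for $v_e$), the denominator converges to the nonzero value $g_\R(v_x, v_x)$, so the expression tends to $\frac{\partial_x H \cdot v_x}{g_\R(v_x,v_x)}\,v$. If $v_x = 0$ — which occurs exactly at the cusp/birth-death points where $\partial_e^2 H$ has a nontrivial kernel not captured by the image — the numerator vanishes identically in $s$, so the gradient equals $0$ for all $s > 0$ and the limit is trivially $0$.

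The only real obstacle is the cusp case, where both the restricted metric $j^* g_0$ and the one-form $d(H|_{S_H})$ become degenerate. The argument resolves this by showing that these two degeneracies are compatible: vertical tangent vectors to $S_H$ automatically lie in the kernel of $d(H|_{S_H})$, so the prospective $0/0$ ambiguity never materializes for positive $s$, and the limit is unambiguously $0$. Thus the limit $\lim_{s \to 0} \nabla_{j^* g_s}(H|_{S_H})$ exists pointwise on $S_H$.
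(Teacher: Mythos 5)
Your proof is correct and follows essentially the same route as the paper: both pick a spanning vector $(u,v)=(v_x,v_e)$ of the one-dimensional tangent space $T_{(x,e)}S_H$, write the restricted gradient as $\frac{dH(v)}{g_\R(v_x,v_x)+s\,g_F(v_e,v_e)}\,v$ using the vanishing of $\partial_e H$ along $S_H$, and split into the cases $v_x\neq 0$ (denominator has a nonzero limit) and $v_x=0$ (numerator vanishes identically, so the gradient is $0$ for all $s$). The only cosmetic difference is that you compute the numerator as $dH(v)$ directly, whereas the paper phrases it as the $g_s$-orthogonal projection of $\nabla_s H=(\alpha,0)$; these coincide.
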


\begin{proof} Let $(u, v)$ be a basis vector for  $T_{(x,e)}S_H \subset T_x\R \times T_e\R^K$. In general, the gradient of a function restricted to a submanifold is the orthogonal projection of the gradient. Along $S_H$, $ \nabla_1 H (x, e)$ has the form $ (\alpha, 0)$, so we can compute
\[\displaystyle
\lim_{s\rightarrow 0} \nabla_{j^*g_s} (H|_{S_H})= \lim_{s\rightarrow 0} \frac{g_s((\alpha, 0), (u,v))}{g_s((u,v),(u,v))} (u,v) =
\] 
\[\displaystyle
\lim_{s\rightarrow 0} \frac{g_\R(\alpha, u)}{g_\R(u,u) +s g_F(v,v)} (u,v).
\] 
Now if $u=0$ the limit is clearly $0$ (this happens at the degenerate critical points of the $h_x$). Otherwise, the limit is
\[
\frac{g_\R(\alpha, u)}{g_\R(u,u)} (u,v).
\]
\end{proof}

We will denote this limiting vector field defined along $S_H$ as $\nabla_0 H$. 

\begin{definition} \label{def:GradStair}
For $p, q \in \mathit{Crit}(H)$ a {\it gradient staircase} from $p$ to $q$ is a triple
\[
( \vec{\rho}, \vec{\varphi}, {\bf r}) = 
\left( (\rho_0, \rho_1, \ldots, \rho_M), (\varphi_1, \ldots, \varphi_M), (r_1, \ldots, r_M) \right)
\]
where 
\begin{itemize}
\item For each $1 \leq m \leq M$, $r_m \in \rr$ and $r_m \leq r_{m+1}$. We make the convention that $r_0= -\infty$ and $r_{M+1} = +\infty$.
\item The $\rho_m$ are continuous maps $\rho_m : [r_m, r_{m+1}] \rightarrow S_H \subset \R\times \R^K$ and satisfy $\dot{\rho}_m = -\nabla_0H\circ \rho_m$ on $(r_m, r_{m+1})$. (This interval may be empty.)
\item For $1 \leq m \leq M$ there exist $x_m\in \R$ such that $\rho_{m-1}(r_m) = (x_m, e_m)$, $\rho_{m}(r_m) = (x_m, f_m)$, and $\varphi_m \in \mathcal{M}(h_x, g_x; e_m, f_m)$.
\end{itemize}
\end{definition}

A gradient staircase may be easily visualized on the front projection for the Legendrian $L_H = i_H(S_H)$. The critical points $p$ and $q$ appear as local maximums and minimums of the strands of $L_H$. Beginning at $p$ we travel downward remaining on the front diagram except for isolated jumps between strands along the fiberwise gradient trajectories $\varphi_m$. These jumps are the ``steps'' of the gradient staircase; see Figure~\ref{fig:GradStair}.

\begin{figure}
\labellist
\small\hair 2pt
\pinlabel {$p$} [tl] at 150 156
\pinlabel {$q$} [tl] at 141 28
\endlabellist
\centerline{\includegraphics[scale=1]{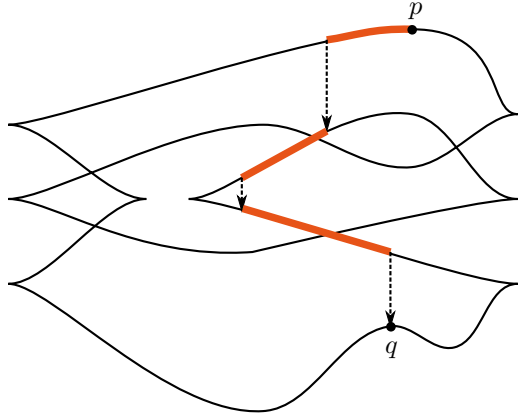}}
\caption{A gradient staircase connecting critical points $p$ and $q$ of $H$.}
\label{fig:GradStair}
\end{figure}

Notice that $\R$ acts on the set of gradient staircases from $p$ to $q$ via translating the $r_m$ and reparametrizing the $\rho_m$ in an appropriate manner. We let $\mathcal{M}(H, g_0; p, q)$ denote the orbit set of this $\R$ action.

\begin{conjecture} \label{conj:GS} For generic choices of $g_\R$ and $g_F$ and fixed $p, q \in \mathit{Crit}(H)$ with $\mathit{ind}(p) -1 = \mathit{ind}(q)$ there exists $\delta >0$ such that for $\delta \geq s >0$ there is a bijection between $\mathcal{M}(H, g_s; p, q)$ and  $\mathcal{M}(H, g_0; p, q)$.
\end{conjecture}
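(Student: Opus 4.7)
The plan is to establish the bijection by combining a parametric Fredholm setup, compactness as $s \to 0$, and a singular-perturbation gluing theorem. First I would place the moduli problem in a uniform framework: for each $s \in (0,1]$, interpret $\mathcal{M}(H, g_s; p, q)$ as the zero set of the section $\gamma \mapsto \dot\gamma + \nabla_s H(\gamma)$ of a Banach bundle over an appropriate space of paths from $p$ to $q$. For generic $(g_\R, g_F)$ and $s>0$, the Morse--Smale condition can be arranged to hold for $H$ with respect to $g_s$, and under the hypothesis $\mathit{ind}(p) - \mathit{ind}(q) = 1$ the quotient $\mathcal{M}(H, g_s; p, q)$ is a smooth $0$-dimensional manifold, hence a finite set.

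Next I would prove the compactness half: given any sequence $s_n \to 0$ and $\gamma_{s_n} \in \mathcal{M}(H, g_{s_n}; p, q)$, a subsequence converges (after reparametrization) to a gradient staircase in $\mathcal{M}(H, g_0; p, q)$. The key geometric input is the limiting formula $\nabla_s H \to \nabla_0 H$ along $S_H$, combined with the example computation at the start of Section~\ref{ch:grad-staircase}. On intervals where $\gamma_{s_n}$ stays in a fixed neighborhood of $S_H \setminus \Sigma$, it converges to a $-\nabla_0 H$-trajectory along $S_H$, producing the base pieces $\rho_m$. On the vanishing-time intervals where $\gamma_{s_n}$ leaves such a neighborhood, the rescaling $t \mapsto s_n t$ converts the dominant part of the ODE into the fiberwise gradient equation for $h_{x_m}$ at the limiting base point, yielding the fiber steps $\varphi_m$. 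Since the total energy $H(p)-H(q)$ is uniformly bounded, only finitely many such fiber excursions can occur, which forces the limit to be a bona fide gradient staircase.

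The harder half is gluing: for every gradient staircase $(\vec{\rho}, \vec{\varphi}, \mathbf{r}) \in \mathcal{M}(H, g_0; p, q)$ and all sufficiently small $s$, there should be a unique $\gamma_s \in \mathcal{M}(H, g_s; p, q)$ close to the staircase. I would build an approximate solution by concatenating the base segments $\rho_{m-1}|_{[r_{m-1}, r_m]}$ with suitably time-rescaled copies of the fiber trajectories $\varphi_m$, glued via cutoff functions of width $\sim s^{1/2}$. The nonlinear equation $\dot\gamma + \nabla_s H(\gamma) = 0$ is then solved by the implicit function theorem applied to the resulting error term. An index additivity argument, summing Fredholm indices of the base pieces (Morse flow on $S_H$) and the fiber steps (Morse flow of $h_{x_m}$), verifies that the linearized operator modulo the $\R$-action has index zero and, together with transversality, produces a unique correction.

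The central obstacle is the degeneration of the linearization as $s \to 0$: the decomposition $g_s = g_\R + s g_F$ makes the fiber component of the flow singular, so a uniform implicit function theorem cannot be applied in a fixed norm. The standard remedy is to work in a scale-dependent weighted Sobolev norm that dilates by $s^{-1/2}$ in the fiber directions near each step and interpolates smoothly with the unweighted base norm on the long intermediate segments. Proving that this norm gives uniform Fredholm bounds and uniform quadratic estimates for the nonlinearity, analogous to pregluing estimates in Floer theory, is the main technical step. Once it is established, compactness shows the gluing map is a bijection, and the resulting count of $g_s$-trajectories matches the chord-path count of Section~\ref{ch:DGA-to-MCS} via the staircase-to-chord-path correspondence indicated in Proposition~\ref{prop:GSCorr}.
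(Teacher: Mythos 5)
There is a genuine gap, but it is of an unusual kind: the statement you are proving is Conjecture~\ref{conj:GS}, and the paper offers no proof of it at all. The authors explicitly leave the analysis open --- they describe gradient staircases only ``conjecturally'' as limits of gradient trajectories and remark that ``an interesting analysis should be required to rigorously establish'' the generating-family DGA. So there is nothing in the paper to compare your argument against; the relevant question is whether your proposal constitutes a proof on its own, and it does not. What you have written is the standard adiabatic-limit \emph{strategy} (a compactness half plus a gluing half), and as a plan it is reasonable and consistent with the heuristics of Section~\ref{ch:grad-staircase}. But every step that carries the actual mathematical weight is asserted rather than established: the convergence of $\gamma_{s_n}$ to a staircase requires uniform control of the flow near the degenerate locus $\Sigma$, where $\nabla_0 H$ vanishes and the limiting objects themselves may fail to be transversally cut out; the gluing step requires uniform Fredholm and quadratic estimates in an $s$-dependent weighted norm, which you name as ``the main technical step'' but do not carry out; and the bijectivity of the gluing map (in particular uniqueness of the $g_s$-trajectory near a given staircase) is precisely where such degenerate-limit arguments typically fail or demand additional genericity hypotheses that you have not formulated.

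Two more specific concerns. First, you invoke genericity of $(g_\R, g_F)$ to get Morse--Smale for each $s>0$ and transversality for the $s=0$ problem simultaneously, but these are different transversality problems (one on $\R\times\R^K$, one on the singular one-dimensional set $S_H$ together with fiberwise Morse data), and a single genericity statement covering both, uniformly in $s$, needs to be stated and proved --- this is essentially the content of the conjecture. Second, Definition~\ref{def:GradStair} permits empty intervals $(r_m, r_{m+1})$, i.e.\ consecutive fiber steps with no base flow between them; your pregluing construction with cutoffs of width $\sim s^{1/2}$ does not obviously handle this degenerate configuration, and such configurations do occur (they correspond to the once-broken trajectories appearing in the combinatorial $d^2=0$ argument). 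Until the uniform estimates and these boundary cases are addressed, the proposal is a research program, not a proof --- which is consistent with the authors' decision to state the result as a conjecture.
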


\subsection{Chord paths as gradient staircases}

We now specialize to the following situation. Let $L$ be a Legendrian knot whose front diagram is nearly plat and in Ng form as in Section~\ref{ch:Background}. Suppose that $F:\R\times\R^N \rightarrow \R$ is a linear at infinity generating family for $L$ and a family of metrics $g_x$ is chosen as in Proposition \ref{prop:GF2MCS} of Section 4.2, and denote the corresponding MCS as $\mathcal{C} = \mathcal{C}(F, g)$. As usual, $w: \R\times \R^N\times \R^N \rightarrow \R$ denotes the difference function of $F$, and due to the nature of the Ng form the critical points of $w$ correspond to crossings and right cusps of $L$; see Figure~\ref{f:Ng-resolution}. For considering gradient staircases of $w$ we will use the Euclidean metric on $\R$ and the metric $(g_F)_x = g_x \times g_x$ on the fibers $\R^N\times \R^N$ as in equation~(\ref{eq:Deform}).

\begin{proposition} \label{prop:GSCorr} With the above assumptions, for any generators $a$ and $b$ with $|a| = |b|+1$, there is a mod $2$ correspondence between the gradient staircases, $\mathcal{M}(w, g_0; a, b)$, and the set of chord paths $\mathcal{M}^{\mathcal{C}}(a; b)$. That is,
\[
\# \mathcal{M}(w, g_0; a, b) = \# \mathcal{M}^{\mathcal{C}}(a; b) \mod 2.
\]

\end{proposition}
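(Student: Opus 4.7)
The approach is a direct geometric correspondence built on identifying the positive part of the fiber critical set $S_w \subset \R \times \R^N \times \R^N$ with the set of chords on $L$. To each $(x, e_1, e_2) \in S_w$ with $F(x,e_1) > F(x,e_2)$ one associates the chord at horizontal coordinate $x$ between the two strands corresponding to $e_1$ (upper) and $e_2$ (lower). Using $\partial F/\partial x(x,e) = y$ together with the defining property of Ng form---strands arranged by decreasing slope from top to bottom---one computes that on this region $\partial w/\partial x = y_1 - y_2 > 0$, so $-\nabla_0 w$ flows from right to left on $S_w$. This matches the right-to-left convention of Definition~\ref{def:ChordPath}. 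A jump $\varphi_m$ is a fiberwise $-\nabla w_{x_m}$-trajectory between two points of $S_w$ dropping Morse index by one; for generic $F$ and $g$ these are precisely the trajectories in which one of $e_1, e_2$ remains fixed at a critical point of $f_{x_m}$ while the other traces a gradient flow line of $f_{x_m}$ between critical points of the same Morse index, that is, a handleslide.

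Given these identifications, from a gradient staircase $(\vec\rho, \vec\varphi, \vec r) \in \mathcal{M}(w, g_0; a, b)$ I would extract a chord path by sampling $\rho$ at every MCS $x$-value $x_m$ in its $x$-range. On each segment $\rho_i$ the strand labels are locally constant and change only at crossings and right cusps of $L$, where they relabel exactly by the permutations $\sigma$ and $\tau$ of Definition~\ref{def:ChordPath}(4a), (4c). Each jump $\varphi_m$ occurs at the $x$-coordinate of a handleslide of $(f_x, g_x)$, which by Proposition~\ref{prop:GF2MCS} is the $x$-coordinate of a handleslide mark of $\mathcal{C}(F,g)$, and the jump realizes precisely the allowance of Definition~\ref{def:ChordPath}(4d). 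The originating condition $\rho_0(t\to-\infty) = a$ forces $\lambda_1 = (x_l, [k,k+1])$, where $k, k+1$ are the two strands meeting at $a$, as in Definition~\ref{def:ChordPath}(2). Conversely, given a chord path I build a staircase by patching $-\nabla_0 w$-flow on $S_w$ between successive chords and inserting fiberwise handleslide trajectories at each jump.

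The terminating condition is where the most care is needed and is also the main obstacle to a strict bijection. The segment $\rho_M$ limits to $b$ as $t \to +\infty$ and lies in the unstable manifold of $b$ for the flow $-\nabla_0 w$ on $S_w$; an analysis of this unstable manifold just to the right of $b$ shows that its contribution at $x = x_{p+1}$ corresponds to pairs $(\lambda, \tau)$, where $\tau$ is a gradient trajectory of $f_{x_{p+1}}$ terminating at a crossing strand of $b$. Because $(C_{p+1}, d_{p+1})$ is by construction the Morse complex of $(f_{x_{p+1}}, g_{x_{p+1}})$, the number of such trajectories coincides with $\langle d_p e_i, e_k\rangle$ (or the analogue $\langle d_p e_{k+1}, e_j \rangle$) modulo $2$. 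This forces the count of staircase terminations at $b$ to agree with the count of chord path terminations only modulo $2$, explaining the statement of the proposition. Assembling these local matches into a global mod $2$ equality between $\#\mathcal{M}(w, g_0; a, b)$ and $\#\mathcal{M}^{\mathcal{C}}(a; b)$ completes the argument. A secondary technical issue---matching the exact $x$-coordinate of each jump $\varphi_m$ to an MCS handleslide mark---is resolved by the identification of handleslides of $(f_x, g_x)$ with handleslide marks of $\mathcal{C}(F,g)$ guaranteed by Proposition~\ref{prop:GF2MCS}.
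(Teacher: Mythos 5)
Your overall strategy is the same as the paper's: identify the positive part of $S_w$ with chords, observe that $-\nabla_0 w$ pushes chords to the left in Ng form, interpret the steps $\varphi_m$ as fiberwise trajectories, and account for the mod $2$ discrepancy at the terminal crossing via the Morse differential. However, there is a genuine gap in your treatment of the intermediate steps, and it shows up as an internal inconsistency: you describe a jump $\varphi_m$ as a fiberwise trajectory ``dropping Morse index by one'' and then immediately identify it with a trajectory ``between critical points of the same Morse index, that is, a handleslide.'' These are different things, and the distinction is the crux of the proof. Generically a step either preserves the fiber Morse index (a handleslide, which occurs only at the isolated $x$-values carrying handleslide marks of $\mathcal{C}(F,g)$) or drops it by one (a trajectory counted by the Morse differential $d_m$, which is available at \emph{every} generic $x$-value). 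Without further argument, a gradient staircase could therefore include an index-dropping step at an arbitrary $x$, producing objects with no chord-path counterpart --- these are exactly the ``exceptional steps'' of Definition~\ref{defn:exceptional-step}, which the paper shows correspond to $|b|=|a|-2$, not $|a|-1$. The paper closes this gap with a Maslov-potential count: $\mu(\lambda)$ cannot increase at any step, the final step $\varphi_M$ landing on $b$ already decreases it by $1$, and $|a|=|b|+1$ then forces every other step to preserve $\mu(\lambda)$, i.e.\ to be a handleslide. Your proposal asserts the conclusion but supplies no argument for it, and the index bookkeeping you do write down contradicts it.

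Two secondary points. First, your description of the termination via ``the unstable manifold of $b$'' is off: the relevant fact (from the resolution construction) is that every positive-value critical point of $w$ is a local \emph{maximum} of $w|_{S_w}$, so no nonconstant $-\nabla_0 w$-trajectory in $S_w$ limits to $b$; hence $\rho_M$ is constant and the final step $\varphi_M$ must land exactly on $b$, with the mod $2$ count of such fiber trajectories being $\langle d_p e_i, e_k\rangle$ (or $\langle d_p e_{k+1}, e_j\rangle$). You reach the right count but through the wrong geometric picture. Second, your claim that $\partial w/\partial x = y_1 - y_2 > 0$ everywhere on the positive part of $S_w$ fails for a chord joining the two crossing strands in the interval where Ng form has already exchanged their slopes; the correct statement is that such chords flow rightward into the region $w \le \delta$ and hence never occur in a staircase from $a$ to $b$, which is how the paper justifies the left-to-right monotonicity.
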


\begin{proof}

The fiber critical set of the difference function is the fiber product of that of the generating family $F$ with itself,
\[
S_w = S_F * S_F = \{ (x, e_1, e_2) \,|\, (x, e_i) \in S_F, \, i = 1, 2 \} .
\]
Therefore, there is a one-to-one correspondence between points of $S_w$ satisfying $w > \delta$ and chords on the front diagram of $L_F$ with length greater than $\delta$; the value of $w$ corresponds to the length of a chord. Since $S_w$ is $1$-dimensional, the flow of $-\nabla_0w$ simply pushes chords in the direction where $w$ decreases. Due to the nature of the Ng form, this means that, during the $\rho_m$ portions of a gradient staircase, chords will proceed to the left. This is because the only chords whose lengths decrease when moved to the right are forced to enter the region where $w \leq \delta$. In fact, the only such chords connect two crossing strands in the interval to the left of the crossing between the change in slope (which corresponds to the crossing on the Lagrangian projection) and the actual location of the crossing on the front projection. All of these observations can be seen in the second row of images in Figure~\ref{f:Ng-resolution}.

A step, $\varphi_m$, in a gradient staircase corresponds to an element of a moduli space of fiber gradient trajectories, $\mathcal{M}\left(w_x, g_x\times g_x; (e_1, e_2), (f_1, f_2)\right)$, where $(x, e_1, e_2), (x, f_1, f_2) \in S_w$. Now, $w_x(e_1, e_2) = f_x(e_1) - f_x(e_2)$, so, since we use the product metric $g_x\times g_x$, such a step $\varphi_m$ amounts to a pair $(\alpha, \beta)$ with $\alpha$ (resp. $\beta$) a trajectory of $-\nabla f_x$ (resp. $+\nabla f_x$) from $e_1$ to $f_1$ (resp. from $e_2$ to $f_2$). This appears in the front diagram picture as the upper end point of a chord jumping downward and/or the lower end point of a chord jumping upward. When $|a| = |b|+1$ one of $\alpha$ or $\beta$ must be constant, and the non-constant trajectory must connect fiber critical points whose Morse indices either agree or differ by $1$. The mod $2$ count of such trajectories is specified by the presence of handleslide marks and the complexes $(C_m, d_m)$ appearing in the MCS $\mathcal{C}$ which are precisely the Morse complexes of the $f_x$.   

Let us now turn to the portion of a gradient staircase near $b$. When restricted to $S_w$ all of the critical points of $w$ with positive critical value are local maxima; this again follows from the resolution construction. Therefore, the only way for a gradient staircase to terminate at $b$ is for the final step, $\phi_M$, to end directly at $b$ after which $\rho_M$ remains there as a constant trajectory. This explains item (3) from Definition~\ref{def:ChordPath}: the terms $\langle d_p e_i, e_k \rangle$ and $\langle d_p e_{k+1}, e_j \rangle$ which appear there correspond mod $2$ to the possibilities for the final step, $\phi_M$. A picture of the end of a chord path which more accurately reflects the resolution construction is given in Figure~\ref{fig:EndOfGradStair}.

\begin{figure}
\centerline{\includegraphics[scale=.7]{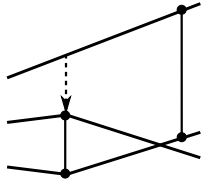}}
\caption{A gradient staircase for $w$ near the terminal critical point $b$.}
\label{fig:EndOfGradStair}
\end{figure}

Next, recall that in the case of a front projection in Ng form the degree of a crossing or cusp is the difference of the Maslov potential of the upper strand and the lower strand. Equivalently, the degree is the difference of the Morse index of the upper and lower strands. For a chord $\lambda$, let $\mu(\lambda)$ denote this difference. Notice $\mu(\lambda)$ can only change at the steps $\varphi_m$ where it cannot increase. At the final step, $\varphi_M$, $\mu(\lambda)$ decreases by $1$. As $|a| = |b| +1$, it follows that $\mu(\lambda)$ must remain unchanged during the rest of the steps. Therefore, all steps $\varphi_m$ with $m < M$ must consist of a single end of a chord jumping along a fiber gradient trajectory between two critical points of the same index, i.e. along a handleslide. 

We have seen that the steps of a gradient staircase $( \vec{\rho}, \vec{\varphi}, {\bf r}) \in \mathcal{M}(w, g_0; a, b)$ only occur at $b$ and handleslides, and that the $\rho_m$ simply push chords to the left.  An element of $\mathcal{M}(w, g_0; a, b)$ is therefore determined by recording the chords that occur at those $x$-values, $x_r$, from the MCS, $\mathcal{C}$, which lie between $a$ and $b$ and remembering which fiber gradient trajectory constitutes the final step, $\varphi_M$. Definition~\ref{def:ChordPath} is designed to include precisely the sequences of chords that can arise in this manner, and item (3) remembers the number of choices for $\varphi_M$ mod $2$. The result follows.

\end{proof}

\subsection{Convex corners and flow trees}

One can easily generalize Conjecture~\ref{conj:GS} to include the case of difference flow trees with more than $1$ output. In this case, we expect a correspondence for small enough $s$ between difference flow trees in $\mathcal{M}^{F, g_s}(a; b_1, \ldots, b_n)$ and trees with edges parameterizing portions of gradient staircases for appropriate difference functions. In this subsection, we will refer to the latter type of tree as a {\it staircase tree}.

When $|a| = |b_1\cdots b_n| +1$, Proposition~\ref{prop:GSCorr} may be extended to show that staircase trees correspond (mod $2$) to chord paths with convex corners.  Again, one should assume that a product metric $\underbrace{(g_x \times \ldots \times g_x)}_{n+1 \,\,\textit{times}}$ is used on the fibers of $P_{n+1}$.  For a generalized difference function $w^{n+1}_{i,j}$, the fiber critical set is identified with that of the original difference function $w = w^2_{1,2}$: Map $(x, e_1, e_2) \in S_w$ to $S_{w^{n+1}_{i,j}}$ by taking $e_1$ and $e_2$ as the $i$-th and $j$-th coordinates and setting the remaining $e_k=0$. Hence, one can produce a chord path from a gradient staircase for $w^{n+1}_{i,j}$ by paying attention to the variables $x$, $e_i$, and $e_j$. 

Let us examine the branch points of a staircase tree. We restrict our attention to a $3$-valent vertex, $v$, where gradient staircases for $w_{i,j}$ (resp. $w_{i,k}$ and $w_{k,j}$) are assigned to the inward oriented edge (resp. outward oriented edges) for some $ i < k < j$; see Figure~\ref{fig:GradStairBranch}. Such a branch point will map to a point $(x, \mathbf{e}) \in P_{n+1}$ with\footnote{This is a consequence of the assumption $|a| = |b_1\cdots b_n| +1$, but need not be the case in general. See, for instance, the edge connecting the cases ${\bf L}\mathrm{BH}3 \leftrightarrow {\bf L}\mathrm{BH}4$ in Proof of Theorem \ref{thm:d2is0}.} $(x, e_i), (x, e_k), (x, e_j) \in S_F$ and $F(x, e_i) > F(x, e_k) > F(x, e_j)$.  This appears on the front projection of $L_F$ as the chord corresponding to $w_{i,j}$ breaking into two chords corresponding to $w_{i,k}$ and $w_{k,j}$ respectively; see Figure~\ref{fig:GradStairBranch}. Each of the new chords then proceeds to the left satisfying the requirements of a chord path. However, in order that $|a| = |b_1\cdots b_n| +1$, it must be the case that at every such branch point one of the new chords remains fixed as a constant trajectory at one of the $b_i$. For a Ng resolution form front diagram this appears as in Figure~\ref{fig:BranchAndCorners}; the constant trajectory at a branch point is conveniently recorded as a convex corner in a chord path. 

\begin{figure}
\labellist
\small\hair 2pt
\pinlabel {$i$} [tl] at 3 57
\pinlabel {$j$} [tl] at 43 57
\pinlabel {$k$} [tl] at 21 11
\pinlabel {$v$} [tl] at 32 36
\endlabellist
\centering
\includegraphics[scale=.7]{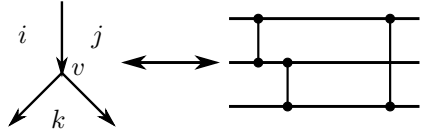}
\caption{The appearance of a branch point in a staircase tree (left) when viewed as a chord path (right).}
\label{fig:GradStairBranch}
\end{figure}

\begin{figure}
\centerline{\includegraphics[scale=.7]{./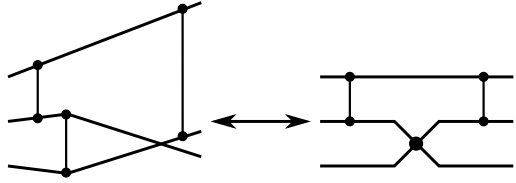}}
\caption{At a branch point, the constant edge is recorded combinatorially as a convex corner in a chord path.}
\label{fig:BranchAndCorners}
\end{figure}

We close this discussion by remarking that along each edge meeting at $v$ the parametrization of the corresponding gradient staircase is such that the vertex occurs in the middle of one of the steps, $\varphi_m$.  For instance, along the inwardly oriented edge corresponding to the difference function $w_{i,j}$, the $e_k$ coordinate remains zero since this is the case along $S_{w_{i,j}}$.  However, at the vertex a portion of a step occurs which relocates $e_k$ to the appropriate value while leaving the remaining coordinates constant.  This can always occur since $e_k$ appears as a negative definite quadratic summand in $w_{i,j}$. The outgoing edges, then begin by immediately returning $e_i$ or $e_j$ to $0$ via a partial step.  
\section{Relating the MCS-DGA to the CE-DGA}
\mylabel{ch:DGA-to-CE-DGA}

Given a Legendrian knot with front projection $\sfront$ and MCS $\sMCS$, the MCS-DGA $(\salg_{\sMCS}, d)$ and the CE-DGA $(\salg(\sfront), \df)$ are both generated by the crossings and right cusps of $\sfront$, regardless of the chosen MCS. The gradings of these generators are also identical in both DGAs. The question then arises, can we understand the differential of one of these DGAs in terms of the other? In the case of a special class of MCSs called \emph{$A$-form MCSs}, defined below and first introduced in \cite{Henry2011}, the answer is yes. 

In Section~\ref{sec:MCS-graphic} we uniquely encoded an MCS $\sMCS \in \sFMCS$ using implicit and explicit handleslide marks on $\sfront$. We say $\sMCS$ is \emph{simple} if there are no implicit handleslides at left cusps. In terms of a generating family, this corresponds to the introduction of a pair of canceling critical points of adjacent index away from the existing critical points. We let $\sSMCS \subset \sFMCS$ denote the set of simple MCSs. The implicit handleslide marks at a left cusp may be made explicit using MCS move 13, hence every MCS is equivalent to a simple MCS. 

The chain complexes of $\sMCS$ may be reconstructed inductively using the explicit and implicit handleslide marks and Conditions (1) - (5) of Definition~\ref{def:MCS}. In fact, in the case of a simple MCS, the chain complexes are determined by Conditions (1) - (5) of Definition~\ref{def:MCS} and the explicit handleslide marks. We no longer need to indicate implicit handleslide marks at the right cusps. Whereas we defined the marked front projection of $\sMCS$ to include both implicit and explicit handleslide marks, we define the \emph{simplified marked front projection} of $\sMCS \in \sSMCS$ to include only the explicit handleslide marks. 

\begin{definition}
An MCS $\sMCS \in \sSMCS$ is in \emph{$A$-form} if the simplified marked front projection of $\sMCS$ satisfies:

\begin{enumerate}
	\item Outside of a small neighborhood of the crossings of $\sfront$, $\sMCS$ has no handleslide marks; and 
	\item Within a small neighborhood of a crossings $q$, either $\sMCS$ has no handleslide marks or it has a single handleslide mark to the left of $q$ between the strands crossings at $q$. 
\end{enumerate}

If a handleslide mark appears in a small neighborhood of $q$, then we say $q$ is \emph{marked}. Finally, we define $\sAMCS \subset \sFMCS$ to be the set of all $A$-form MCSs.
\end{definition}

If we make the right-most explicit handleslide mark in Figure~\ref{f:MCS-trefoil-example} implicit using MCS Move 13, then the resulting MCS is in $A$-form. Every MCS is equivalent to an MCS in $A$-form and a bijection exists between $\sAMCS$ and $\sAugL$; see \cite{Henry2011}.

\begin{theorem}[\cite{Henry2011}]
\label{thm:aug-MCS-bijec}
Given $\sMCS \in \sAMCS$ we define an algebra map $\saug : (\salg(\sfront), \df) \to \zz_2$ by $\saug(q)=1$ for a crossing $q$ in $\sfront$ if and only if $q$ is marked in $\sMCS$. Then $\saug$ is an augmentation and this map from $\sAMCS$ to $\sAugL$ is a bijection.
\end{theorem}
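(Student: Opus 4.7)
The plan is to verify separately that $\saug$ is an augmentation and that the resulting map $\sAMCS \to \sAugL$ is a bijection. The augmentation condition has three parts: $\saug(1) = 1$ (automatic for algebra maps), $\saug(q) = 1$ implies $|q| = 0$, and $\saug \circ \df = 0$. For the degree condition, if $q$ is a marked crossing then by definition of $A$-form there is an explicit handleslide mark left of $q$ between the two strands $T$ and $B$ crossing at $q$. Definition~\ref{defn:Maslov} together with Definition~\ref{def:MCS} require $\sMaslov(T) = \sMaslov(B)$ for the handleslide to be legal, so $|q| = \sMaslov(T) - \sMaslov(B) = 0$.

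The substantive point is $\saug \circ \df = 0$. Given $a \in Q(\sfront)$, the coefficient $\langle \saug(\df a), 1 \rangle$ counts admissible disks originating at $a$ all of whose convex corners are marked crossings (plus a constant $1$ if $a$ is a right cusp). The plan is to use the fact that an $A$-form MCS $\sMCS$ encodes a graded normal ruling $\rho$ of $\sfront$: between successive crossings the chain complexes are essentially constant, and one reads $\rho$ off the pairing $e_i \leftrightarrow e_j$ whenever $\langle d_m e_i, e_j\rangle = 1$, with marked crossings coinciding with the switches of $\rho$. Under this correspondence, admissible disks at $a$ with all corners marked can be matched with pairs of $\rho$-companion strands along the boundary of the disk. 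For $a$ a crossing, these disks admit a free involution exchanging the upper and lower boundary of a $\rho$-paired strip, producing cancellation mod $2$; for $a$ a right cusp, the same involution has exactly one unpaired orbit (the empty disk joining the cusp to its ruling partner), accounting for the extra $1$ in Definition~\ref{defn:DGA-boundary}. This is essentially the Fuchs--Ng--Sabloff correspondence between graded normal rulings and augmentations, recast in the MCS language.

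For the bijection, injectivity is formal: a simple MCS is uniquely determined by its simplified marked front projection (the explicit handleslide marks together with the underlying front), and by the definition of $A$-form all explicit handleslides occur in a standard position adjacent to the marked crossings. Hence two elements of $\sAMCS$ with the same marked crossing set have identical simplified marked front projections and therefore coincide. For surjectivity, given $\saug \in \sAugL$ place one explicit handleslide immediately left of each crossing $q$ with $\saug(q)=1$ and none elsewhere, and build the sequence $\{(C_m, d_m)\}$ left-to-right by Definition~\ref{def:MCS}~(5) at each elementary tangle. The one nontrivial verification occurs at right cusps: Condition~(4) demands $\langle d_m e_k, e_{k+1}\rangle = 1$ to the left of any right cusp $q$ between strands $k, k+1$. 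Tracking how marked and unmarked crossings together with intermediate left cusps modify $\langle d_m e_k, e_{k+1}\rangle$ expresses this coefficient as exactly the sum counting admissible disks at $q$ with all corners marked, so the required identity becomes $\saug(\df q) = 0$, which holds by hypothesis. The resulting MCS is $A$-form with marked crossing set $\saug^{-1}(1)$, giving the desired preimage.

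The main obstacle is the combined right-cusp bookkeeping. In the forward direction one must identify admissible disks originating at a right cusp with the ruling/involution data in the $A$-form chain complexes; in the reverse direction one must verify that the matrix entry $\langle d_m e_k, e_{k+1}\rangle$ computed from the inductive construction matches the sum $\sum \#\Delta(q, w)\saug(w)$. Both rely on the same careful analysis of how the algebraic relations in $(C_m, d_m)$ propagate through crossings, handleslide marks, and cusps, and this local-to-global dictionary is where the argument requires the most effort.
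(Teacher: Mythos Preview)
The paper does not give its own proof of this statement: Theorem~\ref{thm:aug-MCS-bijec} is quoted from \cite{Henry2011} and used as a black box, so there is nothing in the present paper to compare your argument against.

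As a standalone sketch your outline is broadly in the right direction, but two points are imprecise. First, the claim that one ``reads $\rho$ off the pairing $e_i \leftrightarrow e_j$ whenever $\langle d_m e_i, e_j\rangle = 1$'' is not literally correct: the matrix of $d_m$ need not be a permutation matrix, and the normal ruling is extracted from the Barannikov normal form of $(C_m,d_m)$ rather than from the raw entries of $d_m$. For an $A$-form MCS one can show the complexes stay in a standard form, but that itself is part of what must be proved. Second, the ``free involution exchanging the upper and lower boundary of a $\rho$-paired strip'' is not a well-defined operation on admissible disks as stated; the actual cancellation arguments in the ruling/augmentation correspondence (Fuchs, Fuchs--Ishkhanov, Sabloff, Ng--Sabloff) are more delicate and do not reduce to a single geometric involution on disks. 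Your surjectivity paragraph correctly identifies the key verification (Condition~(4) of Definition~\ref{def:MCS} at crossings and right cusps) and correctly links it to $\saug\circ\partial=0$, but the sentence asserting that the matrix entry equals the disk count is exactly the substantive lemma, and your final paragraph already concedes this is where the real work lies. So the proposal is an honest outline rather than a proof; for the details you should consult \cite{Henry2011} directly.
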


Recall from Section~\ref{ch:Background} that an augmentation $\saug$ of $(\salg(\sfront), \df)$ determines a DGA $(\salg(\sfront), \df^{\saug})$ with differential $\df^{\saug} = \phi^{\saug} \circ \df \circ (\phi^{\saug})^{-1}$. The main result of this section proves the differentials of $(\salg_{\sMCS}, d)$ and $(\salg(\sfront), \df^{\saug})$ are equal.

\begin{theorem}
\label{thm:A-form-equivalence}
Given an $A$-form MCS $\sMCS$ with corresponding augmentation $\saug$, there is a grading-preserving bijection between the generators of $\salg_{\sMCS}$ and $\salg(\sfront)$ such that $d = \df^{\saug}$, and hence $(\salg_{\sMCS}, d) = (\salg(\sfront), \df^{\saug_{\sMCS}})$.
\end{theorem}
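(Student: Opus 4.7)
The bijection of generators is the natural one, since both $\salg_{\sMCS}$ and $\salg(\sfront)$ are freely generated by $Q(L)$ with the same Maslov grading. To prove $d = \df^{\saug}$, I plan to compare the coefficients of each monomial $b_1 \cdots b_n$ on both sides, evaluated on a generator $a$. Writing $\df^{\saug}(a) = \phi^{\saug}(\df(a))$ (using that $(\phi^{\saug})^{-1}(a) = a + \saug(a)$ and $\df$ of a constant is zero) and expanding
\[
\phi^{\saug}(c_1 \cdots c_m) = \prod_{i=1}^m \bigl(c_i + \saug(c_i)\bigr) = \sum_{S} \prod_{i \notin S} c_i,
\]
where $S$ ranges over subsets of $\{1, \ldots, m\}$ with $\saug(c_i) = 1$ for every $i \in S$, shows that the coefficient of $b_1 \cdots b_n$ in $\df^{\saug}(a)$ is the mod~$2$ count of pairs $(D, S)$ where $D$ is an admissible disk originating at $a$ and $S$ is a subset of the marked convex corners of $D$ whose removal from $w(D)$ yields $b_1 \cdots b_n$. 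Meanwhile, the coefficient of $b_1 \cdots b_n$ in $d(a)$ is $\#\mathcal{M}^{\sMCS}(a; b_1, \ldots, b_n)$. It therefore suffices to construct a bijection between these two sets.

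The key combinatorial feature of an $A$-form MCS is that every handleslide mark sits immediately to the left of, and between the two crossing strands of, a unique marked crossing. A chord path can jump along such a handleslide only when one of its endpoints meets the marked crossing along one of the two crossing strands, and the jump effectively routes the chord around the marked crossing---geometrically indistinguishable from a convex corner, except that the marked crossing does not appear in $w(\Lambda)$. This motivates the forward map $\Lambda \mapsto (D, S)$: sweep out the region bounded by the chords of $\Lambda$ and the front diagram to form an admissible disk $D$ whose convex corners are the convex corners of $\Lambda$ together with every marked crossing at which $\Lambda$ jumped along the adjacent handleslide, and let $S$ be this latter subset. The reverse map is obtained by slicing $D$ by vertical lines, placing a convex corner at each corner of $D$ not in $S$ and a handleslide jump at each corner in $S$. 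In either direction, the counter-clockwise ordering of corners around $\partial D$ matches the right-to-left reading of upper corners and left-to-right reading of lower corners used in the word $w(\Lambda) = (f_1 \cdots f_s)\, b\, (h_1 \cdots h_t)$ of Definition~\ref{def:ConvexCorners}, so the words agree.

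The main obstacle will be the terminating chord condition in Definition~\ref{def:ChordPath}(3), which requires $\langle d_p e_i, e_k \rangle = 1$ or $\langle d_p e_{k+1}, e_j \rangle = 1$ for the complex $(C_p, d_p)$ just to the left of the terminating crossing $b$. One must show that these conditions on $d_p$ correspond precisely to the ways an admissible disk (with some marked convex corners further to the left selected into $S$) can terminate at $b$. This requires the explicit description of the differentials in an $A$-form MCS from \cite{Henry2011}, which characterizes $\langle d_p e_i, e_j \rangle$ in terms of certain sequences of marked crossings and ruling-like paths to the right of $x_p$; each such sequence corresponds to the portion of an admissible disk built from augmented convex corners that connects the terminating arc at $b$ back to the strands $i$ and $j$. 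A parallel local analysis at the originating singularity $a$, together with the constant $1$ contributed when $a$ is a right cusp, then completes the identification $d = \df^{\saug}$.
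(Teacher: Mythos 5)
Your overall strategy is the paper's: expand $\df^{\saug}a$ as a mod~$2$ count of pairs $(D,S)$ with $S$ a set of augmented convex corners, and match these against chord paths. But the central step as you state it fails: there is in general \emph{no bijection} between the chord paths with word $b_1\cdots b_n$ and the pairs $(D,S)$ with $w(D,S)=b_1\cdots b_n$, and your forward map $\Lambda\mapsto(D,S)$ is not well-defined. A chord path stops at its terminating crossing $b$ subject only to the algebraic condition $\langle d_p e_i,e_k\rangle=1$, whereas an admissible disk must continue leftward past $b$ all the way to a left cusp, passing through further convex corners that must all lie in $S$. ``Sweeping out the region bounded by the chords'' therefore determines $D$ only to the right of $b$; the completion of the disk to the left of $b$ is extra data, and the number of such completions is $\langle d_p e_i,e_k\rangle$ only \emph{mod}~$2$ --- it can be any odd number, so one chord path can correspond to $3$, $5$, \dots\ disk pairs. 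The correct statement (the paper's Lemma~\ref{lemma:disk-bijection}) is a bijection between $\Delta(a;b,[i,k])$ and the product $\mathcal{G}^{\sMCS}(b,[i,k])\times\mathcal{M}^{\sMCS}(a;b,[i,k])$, where a ``gradient path'' $\Omega\in\mathcal{G}^{\sMCS}(b,[i,k])$ is precisely the all-augmented left portion of the disk, running rightward from a left cusp to the chord $[i,k]$ beside $b$.

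You do flag this as ``the main obstacle,'' but your proposed resolution is both misdirected and unproven. The coefficient $\langle d_p e_i,e_k\rangle$ is determined by the part of the diagram to the \emph{left} of $x_p$ (the complexes of an MCS are built up from the leftmost cusp), not by ``paths to the right of $x_p$'' as you write; and the identity $\#\mathcal{G}^{\sMCS}(x_p,[i,k])=\langle d_p e_i,e_k\rangle$ is not something you can simply cite --- it is the crux of the argument, proven in the paper (Lemma~\ref{lemma:gradient-paths}) by an induction over the elementary tangles of the front, using the simplicity of the $A$-form MCS at left cusps and the handleslide relation at marked crossings. Once that lemma is in place, the terminating condition $\langle d_p e_i,e_k\rangle=1$ for a nonempty $\mathcal{M}^{\sMCS}(a;b,[i,k])$ forces $\#\mathcal{G}^{\sMCS}(b,[i,k])\equiv 1$, and the product bijection gives the mod~$2$ equality of the two differentials. (A smaller omitted point: discarding the disk pairs with $w(D,\Psi)=1$, including the constant term at right cusps, requires invoking $\saug\circ\df=0$.) Your local dictionary between handleslide jumps at marked crossings and augmented convex corners is correct and is exactly what the paper uses; the gap is the missing counting lemma and the false bijection claim built on top of it.
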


The proof of Theorem~\ref{thm:A-form-equivalence} relies on a careful comparison of the admissible disks contributing to the differential of $(\salg(\sfront), \df^{\saug})$ and the chord paths contributing to the differential of $(\salg_{\sMCS}, d)$. We begin by describing the computation of $\df^{\saug}$ in terms of admissible disk pairs.

\begin{definition}
Given $\saug \in \sAugL$ an \emph{admissible disk pair} $(D, \Psi)$ is an admissible disk $D$ and a subset $\Psi$, possibly trivial, of the augmented convex corners of $D$. The monomial $w(D, \Psi)$ is the result of removing the generators in $\Psi$ from $w(D)$. If $\Psi$ contains all of the convex corners of $D$, then we define $w(D, \Psi)=1$.

Suppose $a$ is a crossing or right cusp, $b$ is a crossing between strands $k$ and $k+1$, and $i < k$. We define $\Delta(a; b, [i, k])$ to be the set of admissible disk pairs $(D, \Psi)$ satisfying: $D$ originates at $a$; $b \notin \Psi$ and $b$ is a convex corner of $D$; the intersection of $D$ with a vertical line $\{x\} \times \rr$ where $x$ is just to the left of $b$ is a vertical line between strands $i$ and $k$; and if $c$ is a convex corner of $D$ and $c$ is to the left of $b$, then $c \in \Psi$. Given $j > k+1$, the set $\Delta(a; b, [k+1, j])$ is similarly defined.
\end{definition}

\begin{remark}
Given an admissible disk pair $(D, \Psi) \in \Delta(a; b, [i, k])$, we have assumed $b$ is a convex corner of $D$ and $b \notin \Psi$, so $b$ appears in the monomial $w(D, \Psi)$ and so $w(D, \Psi) \neq 1$.
\end{remark}

\begin{proposition}
\label{prop:d-aug}
The differential of $(\salg(\sfront), \df^{\saug})$ is computed by:

\begin{equation}
\label{eq:d-aug}
\df^{\saug} a =  \sum_{\substack{b \in Q(\sfront)\\i<k\\(D, \Psi) \in \Delta(a; b, [i, k])}} w(D, \Psi) + \sum_{\substack{b \in Q(\sfront)\\j>k+1\\(D, \Psi) \in \Delta(a; b, [k+1, j])}} w(D, \Psi) 
\end{equation}
where strands $k$ and $k+1$ cross at $b$, hence in both sums $k$ and $k+1$ depend on $b$.
\end{proposition}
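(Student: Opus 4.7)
The plan is to compute $\df^\saug = \phi^\saug \circ \df \circ (\phi^\saug)^{-1}$ applied to a generator $a$ by direct expansion, then match the resulting terms with the admissible disk pairs in the proposition. First, observe that $\phi^\saug$ is an involution over $\zz_2$: $(q + \saug(q)) + \saug(q) = q$, so $(\phi^\saug)^{-1} = \phi^\saug$. Since $\df$ annihilates constants, $\df\phi^\saug(a) = \df(a + \saug(a)) = \df a$, which reduces the problem to showing that $\phi^\saug(\df a)$ equals the right hand side of equation~(\ref{eq:d-aug}).

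Next, I would expand $\phi^\saug(\df a)$ by applying $\phi^\saug$ to each monomial $w(D) = c_1 c_2 \cdots c_m$ coming from Definition~\ref{defn:DGA-boundary} separately. Since $\phi^\saug(c_i) = c_i + \saug(c_i)$, we obtain
\[
\phi^\saug(c_1 \cdots c_m) = \prod_{i=1}^m (c_i + \saug(c_i)) = \sum_{\Psi \subseteq \{c_1, \ldots, c_m\}} \Bigl(\prod_{c \in \Psi} \saug(c)\Bigr)\, w(D, \Psi),
\]
where $w(D, \Psi)$ is the monomial obtained from $w(D)$ by deleting the convex corners in $\Psi$. A term survives precisely when every corner in $\Psi$ is augmented, so after summing over $D$ the total is indexed by pairs $(D, \Psi)$ with $\Psi$ a subset of the augmented convex corners of $D$. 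The constant contributions ($w(D, \Psi) = 1$) collect the "$1$" coming from $\df$ at a right cusp together with the pairs in which every convex corner of $D$ is augmented and $\Psi$ is the full corner set; their total value is exactly $\saug(\df a) = 0$, since $\saug$ is an augmentation. Thus only pairs $(D, \Psi)$ with $w(D, \Psi) \neq 1$ survive.

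Finally, I would set up a bijection between these surviving pairs and the admissible disk pairs indexing the sum in the proposition. Given such $(D, \Psi)$, the nearly plat hypothesis guarantees that the convex corners of $D$ have distinct $x$-coordinates, so we may let $b$ be the leftmost convex corner of $D$ not in $\Psi$. By construction every convex corner of $D$ strictly to the left of $b$ lies in $\Psi$, and $b$ itself is automatically a crossing, say between strands $k$ and $k+1$. The two possible local orientations of a convex corner at $b$ force the vertical cross-section of $D$ just to the left of $b$ to be an interval either of the form $[i, k]$ with $i < k$ or of the form $[k+1, j]$ with $j > k+1$, which places $(D, \Psi)$ in a unique set $\Delta(a; b, [i, k])$ or $\Delta(a; b, [k+1, j])$. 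Conversely, any pair in some $\Delta(a; b, \ldots)$ evidently has $b$ as its leftmost non-$\Psi$ convex corner, completing the bijection and yielding equation~(\ref{eq:d-aug}). The main technical issue is the cross-section analysis at $b$: one must verify that the two local pictures of convex corners in Figure~\ref{f:admissible-disks-1}~(c) truly exhaust all local configurations that an admissible disk can assume at a convex corner, so that the dichotomy between the $[i,k]$ and $[k+1,j]$ forms is complete and the choices of $(b, i, k)$ or $(b, k+1, j)$ are disjoint.
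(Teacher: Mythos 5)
Your proposal is correct and follows essentially the same route as the paper's proof: expand $\phi^{\saug}\circ\df\circ(\phi^{\saug})^{-1}$ on a generator, identify the surviving terms with subsets $\Psi$ of augmented convex corners, kill the constant terms via $\saug\circ\df=0$, and sort the remaining pairs $(D,\Psi)$ by the leftmost convex corner not in $\Psi$ into the sets $\Delta(a;b,[i,k])$ and $\Delta(a;b,[k+1,j])$. The final dichotomy you flag as the technical issue is exactly what the paper's embeddedness observation for admissible disks on nearly plat fronts supplies.
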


\begin{proof}

Let $D \in \Delta(a,w)$ where $ w = \prod_{i=1}^l \alpha_i$, $\alpha_i \in Q(L)$. Then,
\begin{eqnarray*}
\label{eq:d-aug-disks}
\df^{\saug}a &=& \phi^{\saug} \circ \df \circ (\phi^{\saug})^{-1} (a) \\
&=& \phi^{\saug} \left( \prod_{i=1}^l \alpha_i + \hdots \right) \nonumber \\
&=& \prod_{i=1}^l (\alpha_i + \saug(\alpha_i)) + \phi^{\saug}(\hdots) \nonumber 
\end{eqnarray*}
Hence, $D$ contributes a monomial for each, possibly trivial, subset $\Psi$ of the augmented, convex corners of $D$ and this monomial is the result of removing the generators in $\Psi$ from $w(D)$. We can conclude $\df^{\saug} a = \sum w(D, \Psi) $ where the sum is over all admissible disk pairs $(D, \Psi)$ where $D$ originates at $a$. The augmentation condition $\saug \circ \df = 0$ ensures $\langle \df^{\saug}a, 1 \rangle = 0$, so we can disregard admissible disk pairs satisfying $w(D, \Psi)=1$ in the sum $ \sum w(D, \Psi) $. 

An admissible disk is embedded away from singularities since $L$ is nearly plat. Furthermore, an admissible disk pair $(D, \Psi)$ with $w(D, \Psi)\neq 1$ has a unique convex crossing $b$ so that if $c$ is a convex corner of $D$ and $c$ is to the left of $b$, then $c \in \Psi$. There exists some unique $i$ or $j$ so that $(D, \Psi) \in \Delta(a; b, [i, k])$ or $(D, \Psi) \in \Delta(a; b, [k+1, j])$. Thus, the monomials in $ \sum w(D, \Psi) $ correspond to the monomials in the right-hand side of equation~(\ref{eq:d-aug}).
\end{proof}

Given an MCS $\sMCS =\left( \{(C_m, d_m)\}, \{x_m\}, H \right)$ with MCS-DGA $(\aac_{\sMCS}, d)$, we can give a formula for $d$ that looks similar to equation~(\ref{eq:d-aug}), but uses chord paths rather than admissible disk pairs. 

\begin{definition}
Suppose $\sMCS =\left( \{(C_m, d_m)\}, \{x_m\}, H \right)$ is an MCS, $a$ is a crossing or right cusp, and $b$ is a crossing between strands $k$ and $k+1$ in the tangle between $x_p$ and $x_{p+1}$. Then for $i<k$ we let $\mathcal{M}^{\sMCS}(a; b, [i, k])$ denote the set of chord paths with convex corners with elements $\Lambda = (\lambda_1, \lambda_2, \ldots, \lambda_n)$ originating at $a$ and terminating at $b$ with $\lambda_n = (x_p,[i,k])$. Given $j > k+1$, the set $\mathcal{M}^{\sMCS}(a; b, [k+1, j])$ is similarly defined.
\end{definition}

\begin{proposition}
\label{prop:d-MCS}
The differential of the MCS-DGA $(\salg_{\sMCS}, d)$ is computed by:

\begin{equation}
\label{eqn:d-MCS}
d a =  \sum_{\substack{b \in Q(\sfront)\\i<k\\\Lambda \in \mathcal{M}^{\sMCS}(a; b, [i, k])}} w(\Lambda) + \sum_{\substack{b \in Q(\sfront)\\j>k+1\\\Lambda \in \mathcal{M}^{\sMCS}(a; b, [k+1, j])}} w(\Lambda) 
\end{equation}
where strands $k$ and $k+1$ cross $b$, hence in the sum $k$ and $k+1$ depend on $b$.
\end{proposition}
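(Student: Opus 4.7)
The plan is to unpack the definition of $d$ from Section \ref{sec:MCS-DGA-defn} and then repackage the resulting sum by partitioning chord paths according to their terminal chord. No geometric input is needed beyond what is already in Definition \ref{def:ChordPath} and Proposition \ref{prop:CPDegree}; the argument is essentially a change of summation order.

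Recall that by definition $d a = \sum_{n \geq 1} d_n a$ with $d_n a = \sum_{b_1 \cdots b_n} \# \mathcal{M}^{\sMCS}(a; b_1, \ldots, b_n)\, b_1 \cdots b_n$, the inner sum taken over monomials with $|b_1 \cdots b_n| = |a| - 1$. Since $\mathcal{M}^{\sMCS}(a; b_1, \ldots, b_n)$ is by construction the set of chord paths with convex corners $\Lambda$ originating at $a$ with $w(\Lambda) = b_1 \cdots b_n$, the expression $\# \mathcal{M}^{\sMCS}(a; b_1, \ldots, b_n)\, b_1 \cdots b_n$ equals $\sum_{\Lambda \in \mathcal{M}^{\sMCS}(a; b_1, \ldots, b_n)} w(\Lambda)$. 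Summing over all $n$ and all monomials---the degree restriction being automatic by Proposition \ref{prop:CPDegree}---yields
\[
d a = \sum_{\Lambda} w(\Lambda),
\]
where the outer sum ranges over the (finite) set of all chord paths with convex corners originating at $a$.

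Next I would partition this sum by the terminal data of $\Lambda$. By Definition \ref{def:ChordPath}(3), every chord path terminates at a unique crossing $b$, and writing $b$ as the crossing between strands $k$ and $k+1$ in the elementary tangle between $x_p$ and $x_{p+1}$, the terminal chord $\lambda_n$ is of exactly one of two mutually exclusive forms: either $(x_p, [i,k])$ with $i < k$, or $(x_p, [k+1, j])$ with $j > k+1$. The corresponding sets of chord paths are precisely $\mathcal{M}^{\sMCS}(a; b, [i,k])$ and $\mathcal{M}^{\sMCS}(a; b, [k+1, j])$ as defined immediately before the proposition. Regrouping $\sum_{\Lambda} w(\Lambda)$ along this partition produces exactly the right-hand side of equation~(\ref{eqn:d-MCS}).

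The argument is pure bookkeeping and I do not anticipate a substantial obstacle. The only item warranting explicit verification is that Definition \ref{def:ChordPath}(3) really forces the terminal chord into exactly one of the two listed families---for instance, no terminal chord may span both crossing strands of $b$, nor have both endpoints strictly off the crossing strands---but this is immediate from reading the definition, and the mutual exclusivity of the two cases follows from the fact that a single chord cannot have upper endpoint both strictly above strand $k$ and equal to strand $k+1$.
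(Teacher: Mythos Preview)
Your proposal is correct and follows essentially the same approach as the paper: rewrite $da$ as a sum of $w(\Lambda)$ over all chord paths with convex corners originating at $a$ (using Proposition~\ref{prop:CPDegree} to drop the degree restriction), then partition by the terminal crossing $b$ and the form of the terminal chord via Definition~\ref{def:ChordPath}(3). The paper's proof is the same bookkeeping argument, phrased almost identically.
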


\begin{proof}
The differential $d$ of the MCS-DGA $(\salg_{\sMCS}, d)$ is defined in Section~\ref{sec:MCS-DGA-defn} to be $$da = \sum_{n=1}^{\infty} \sum \# \mathcal{M}^{\sMCS}(a; b_1, \hdots, b_n) b_1 \cdots b_n $$ where $\mathcal{M}^{\sMCS}(a; b_1, \hdots, b_n)$ is the set of chord paths with convex corners originating at $a$ with $w(\Lambda) = b_1 \cdots b_n$ and the inner sum is over all monomials $b_1 \cdots b_n$ such that $|a|=|b_1 \cdots b_n| +1$. But by Proposition~\ref{prop:CPDegree}, $\mathcal{M}^{\sMCS}(a; b_1, \hdots, b_n) \neq \emptyset$ implies $|a|=|b_1 \cdots b_n| +1$, hence, every chord path with convex corners originating at $a$ contributes to $da$. This allows us to rewrite $d$ as: 
\begin{equation}
\label{eqn:prop-d-MCS}
da =  \sum_{\Lambda \in \mathcal{M}^{\sMCS}(a)} w(\Lambda) 
\end{equation}
where $\mathcal{M}^{\sMCS}(a)$ is the set of chord paths with convex corners originating at $a$. The set $\mathcal{M}^{\sMCS}(a)$ decomposes into a disjoint union based on the terminating chord of each $\Lambda \in \mathcal{M}^{\sMCS}(a)$: 
\begin{equation}
\label{eqn:prop-d-MCS-2}
\mathcal{M}^{\sMCS}(a)= \coprod_{b \in Q(L)} \left (\coprod_{i<k} \mathcal{M}^{\sMCS}(a; b, [i, k]) \cup \coprod_{j>k+1} \mathcal{M}^{\sMCS}(a; b, [k+1, j])  \right )
\end{equation}
where strands $k$ and $k+1$ cross at $b$. Equation~(\ref{eqn:d-MCS}) follows from equations~(\ref{eqn:prop-d-MCS})~and~(\ref{eqn:prop-d-MCS-2}). 
\end{proof}

Suppose $a, b, i$ and $k$ are as in both Propositions~\ref{prop:d-aug} and \ref{prop:d-MCS}. Each admissible disk in $\Delta(a; b, [i, k])$ originates at $a$ and terminates at some left cusp to the left of $b$. On the other hand, each chord path of $\mathcal{M}^{\sMCS}(a; b, [i, k])$ originates at $a$ and terminates at $b$. In the case of an $A$-form MCS, we will find a correspondence between these sets. We do so by introducing the notion of a ``gradient path,'' which is a sequence of chords on $\sfront$ originating at a left cusp and encoding a sequence of fiber-wise gradient flowlines. Every admissible disk in $\Delta(a; b, [i, k])$ decomposes into a gradient path beginning at a left cusp and terminating at $b$ plus a chord path in $\mathcal{M}^{\sMCS}(a; b, [i, k])$. In addition, we can ``glue'' a gradient path and a chord path together to produce an admissible disk pair; see Figure~\ref{f:bijection-example}. The result is a bijection that allows us to prove Theorem~\ref{thm:A-form-equivalence} from Propositions~\ref{prop:d-aug}~and~\ref{prop:d-MCS}. This gluing argument depends on the fact that the placement of handleslide marks in an $A$-form MCS is very restrictive. In the next section, we introduce gradient paths. The proof of Theorem~\ref{thm:A-form-equivalence} follows in Section~\ref{sec:isomorphism}.

\subsection{Gradient paths in an MCS} 
\mylabel{sec:grad-paths}

We will use the notion of a chord defined in Section~\ref{sec:chord-paths} to define a sequence of chords encoding the behavior of fiberwise gradient flowlines in the chain complexes in an MCS $\sMCS =\left( \{(C_m, d_m)\}, \{x_m\}, H \right)$. The resulting ``gradient path'' is a sequence of chords originating at a left cusp and represented on the front projection as a sequence of dotted arrows. 

\begin{definition} \label{def:GradientPath}
 A {\it gradient path} from a left cusp $c$ in the MCS $\sMCS =\left( \{(C_m, d_m)\}, \{x_m\}, H \right)$ is a finite sequence of chords $\Omega = (\omega_1, \omega_2, \ldots, \omega_n)$, so that 
\begin{enumerate} 

\item There exists $k \geq 0$ so that for all $1 \leq i \leq n$ the $x$-coordinate of $\omega_i$ is $x_{i+k} \in \{x_m\}$. Therefore, for all $1 \leq r < n$, $\omega_{r+1}$ is located to the \emph{right} of $\omega_r$.  

\item Assuming $c$ occurs between $x_{l-1}$ and $x_{l}$ and involves strands $k$ and $k+1$, we require $\omega_1 = ( x_l, [k, k+1])$; see Figure~\ref{f:gradient-path-originating-terminal}. We say $\omega_1$ is the \emph{originating chord} of $\Omega$.

\item Suppose $1 \leq r < n$, $\omega_r = (x_{m}, [i,j])$, and the chord $\omega_{r+1} = (x_{m+1}, [i',j'])$.  Then, $i'$ and $j'$ are required to satisfy some restrictions depending on the type of tangle appearing in the region $x_m \leq x \leq x_{m+1}$ as follows:
\end{enumerate}

\smallskip

\noindent (a) {\bf Crossing:} Assuming the crossing involves strands $k$ and $k+1$, we require $i' = \sigma(i), j' = \sigma(j)$ where $\sigma$ denotes the transposition $(k \,\, k+1)$.  A chord of the form $\omega_r = (x_{m}, [k,k+1])$ is not allowed to occur to the left of a crossing in a gradient path.

\smallskip

\noindent (b) {\bf Left/Right cusp:}  Suppose the strands meeting at the cusp are labeled $k$ and $k+1$. In the case of a left (resp. right) cusp, we require $i' = \tau(i)$, $j' = \tau(j)$ (resp. $i = \tau(i')$, $j = \tau(j')$) were $\tau$ is defined by $\tau(l) = 
\begin{cases} 
l & \mbox{if $l < k$} \\
l+2 & \mbox{if $l \geq k$}.
\end{cases}$

\noindent (c) {\bf Handleslide:}  Suppose the handleslide occurs between strands $k$ and $l$ with $k < l$.  Here, $i' = i$ and $j' =j$ is always allowed.  In addition, if $i= l$ and $l < j$ we also allow $i'=k$ and $j' =j$.  Analogously, if $j=k$ and $i < k$ then we allow $i'=i$ and $j' = l$.  This rule can be viewed as allowing the endpoint of a gradient chord to possibly jump along the handleslide.  Such a jump will increase the length of the chord.

\end{definition}

\begin{figure}[t]
\labellist
\small\hair 2pt
\pinlabel {$k$} [tl] at 90 70
\pinlabel {$k+1$} [tl] at 90 47
\pinlabel {$c$} [tl] at 0 55
\pinlabel {$\omega_1$} [tl] at 55 78
\pinlabel {$x_l$} [tl] at 55 16
\pinlabel {$i$} [bl] at 297 74
\pinlabel {$j$} [tl] at 297 64
\pinlabel {$x_p$} [tl] at 235 8
\endlabellist
\centering
\includegraphics[scale=.7]{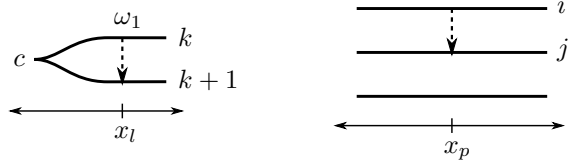}
\caption{The chord on the left is an originating chord of a gradient path. The chord on the right represents a terminating chord of a gradient path in $\mathcal{G}^{\sMCS}(x_p, [i, j])$.}
\label{f:gradient-path-originating-terminal}
\end{figure}

\begin{figure}[t]
\centering
\includegraphics[scale=.7]{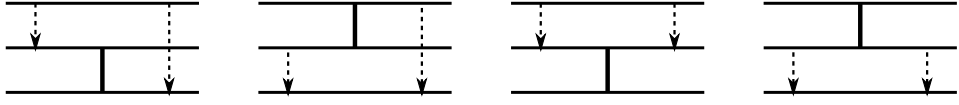}
\caption{Consecutive chords of a gradient path interacting with a handleslide.}
\label{f:gradient-path-handleslides}
\end{figure}

The left two figures in Figures~\ref{f:gradient-path-handleslides} show a jump that may occur in a gradient path as the chords move past a handleslide. However, a gradient path is not required to make such a jump, as the right two figures in Figures~\ref{f:gradient-path-handleslides} indicate. In all other cases where chords in a gradient path encounter handleslides and crossings, the consecutive chords have endpoints on the same strands. 

We let $\mathcal{G}^{\sMCS}(x_p, [i, j])$ denote the set of gradient paths $\Omega = (\omega_1, \omega_2, \ldots, \omega_n)$ with $\omega_n = (x_p,[i,j])$; see Figure~\ref{f:gradient-path-originating-terminal}. This set includes gradient paths originating at all possible left cusps.

\begin{lemma} 
\label{lemma:gradient-paths}
Suppose $\sMCS =\left( \{(C_m, d_m)\}, \{x_m\}, H \right) \in \sAMCS$ is an $A$-form MCS on a nearly plat front projection $L$ and suppose $x_p \in \{x_m\}$ is to the left of the right cusps of $L$. Then  for all $i<j$, 
\begin{equation}
\label{eq:grad-paths}
\# \mathcal{G}^{\sMCS}(x_p, [i, j])= \langle d_p e_i, e_j \rangle.
\end{equation}
\end{lemma}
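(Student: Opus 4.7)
The plan is to prove equation~(\ref{eq:grad-paths}) by induction on $p$, moving left to right through the elementary tangles of $\sfront$. For the base case, I would take $x_p$ to lie just to the right of the leftmost left cusp of $\sfront$; at this point $C_p$ has exactly two generators $e_1, e_2$ with $d_p e_1 = e_2$, and the unique gradient path terminating at $(x_p,[1,2])$ is the originating chord of that cusp itself. Thus $\#\mathcal{G}^{\sMCS}(x_p, [1,2]) = 1 = \langle d_p e_1, e_2\rangle$, while both sides vanish for all other index pairs.

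For the inductive step, I would analyze the effect of the tangle between $x_p$ and $x_{p+1}$ on both sides of equation~(\ref{eq:grad-paths}). Since $x_p$ is to the left of all right cusps, only crossings (necessarily unmarked, by the $A$-form hypothesis), left cusps, and handleslides occur. For an unmarked crossing of strands $k, k+1$, both sides transform by the transposition $\sigma = (k\ k+1)$: Definition~\ref{def:MCS}(5)(a) gives $\langle d_{p+1} e_i, e_j\rangle = \langle d_p e_{\sigma(i)}, e_{\sigma(j)}\rangle$, and this is matched by the bijection on gradient paths from Definition~\ref{def:GradientPath}(3)(a). For a left cusp, the simplicity of an $A$-form MCS guarantees no implicit handleslides, so $d_{p+1}$ is obtained from $d_p$ by reindexing via $\tau$ and adjoining $d_{p+1} e_k = e_{k+1}$; correspondingly, each gradient path at $x_{p+1}$ is either pushed forward from $x_p$ via $\tau$ or is the brand-new path consisting of the originating chord of the cusp, which contributes the unique count for the pair $[k,k+1]$. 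For a handleslide between strands $k$ and $l$, the conjugation formula $d_{p+1} = h_{k,l} \circ d_p \circ h_{k,l}^{-1}$ together with the triangularity of $d_p$ gives $\langle d_{p+1} e_i, e_l\rangle = \langle d_p e_i, e_l\rangle + \langle d_p e_i, e_k\rangle$ for $i \neq k$ and $\langle d_{p+1} e_k, e_j\rangle = \langle d_p e_k, e_j\rangle + \langle d_p e_l, e_j\rangle$. These exactly mirror the jump rules of Definition~\ref{def:GradientPath}(3)(c): a gradient path ending at $(x_{p+1}, [i,l])$ with $i < k$ either came forward unchanged from $(x_p, [i,l])$ or arose from a gradient path ending at $(x_p, [i,k])$ that jumped along the handleslide, and symmetrically for chords $[k,j]$ with $j > l$.

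The main obstacle will be careful bookkeeping in the handleslide case: one must verify that gradient paths jumping along the handleslide are neither omitted nor double-counted, and that no spurious gradient paths appear at index pairs not governed by the conjugation formula. The triangular structure of the differentials together with the requirement $i<j$ on chord indices should rule out any remaining cases, and verifying the ``cross-strand'' indices $(i,j) \in \{(k,l),(l,k)\}$ separately is essentially automatic. A secondary point is to confirm that the $A$-form hypothesis implies simplicity (as noted at the start of Section~\ref{ch:DGA-to-CE-DGA}), since this is precisely what excludes implicit handleslides at left cusps and keeps the cusp case of the induction clean.
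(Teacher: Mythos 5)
Your proposal is correct and follows essentially the same route as the paper's proof: induction over the elementary tangles from left to right, with the base case at the leftmost cusp and a case analysis (crossing, left cusp, handleslide) in which the bijections on gradient paths are matched against the transformation rules for the differentials, including the mod-2 sum formula $\langle d_{p+1} e_i, e_l\rangle = \langle d_p e_i, e_l\rangle + \langle d_p e_i, e_k\rangle$ coming from conjugation by $h_{k,l}$ in the handleslide case. One small aside: your parenthetical that crossings are ``necessarily unmarked'' in an $A$-form MCS is not quite right (marked crossings are exactly what encode the augmentation), but this is harmless here since the handleslide mark accompanying a marked crossing occupies its own elementary tangle and is absorbed into your handleslide case.
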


\begin{proof}
We will induct on $p$, $i$ and $j$. Suppose $p=1$. The left-cusp to the left of $x_p$ involves strands $1$ and $2$. The set $\mathcal{G}^{\sMCS}(x_1, [1, 2])$ contains one element, namely the gradient path $\Omega = (\omega_1)$ with $\omega_1 = (x_1, [1,2])$. Hence, we have $\# \mathcal{G}^{\sMCS}(x_1, [1, 2]) = 1$. Definition~\ref{def:MCS}~(4) gives $\langle d_1 e_1, e_2 \rangle=1$, so we can conclude $\# \mathcal{G}^{\sMCS}(x_1, [1, 2]) = \langle d_1 e_1, e_2 \rangle$ as desired. 

Suppose $p>1$ and fix $i < j$. The proof breaks into cases depending on the tangle between $x_{p-1}$ and $x_{p}$. The hypothesis that $x_p$ occurs to the left of the right cusps of $L$ allows us to disregard the possibility of a right cusp between $x_{p-1}$ and $x_{p}$.

\textbf{Crossing}: Suppose the crossing occurs between strands $k$ and $k+1$ and let $\sigma$ denotes the transposition $(k \,\, k+1)$. Given a gradient path $\Omega = (\omega_1, \hdots, \omega_p) \in \mathcal{G}^{\sMCS}(x_p, [i, j])$, Definition~\ref{def:GradientPath}~(3a) gives $\omega_{p-1} = (x_{p-1}, [\sigma(i), \sigma(j)])$. Every gradient path in $\mathcal{G}^{\sMCS}(x_{p-1}, [\sigma(i),\sigma(j)])$ can be extended to a path in $\mathcal{G}^{\sMCS}(x_p, [i, j])$ with $\omega_p = (x_p, [i, j])$. Hence, we have $\# \mathcal{G}^{\sMCS}(x_p, [i, j]) = \# \mathcal{G}^{\sMCS}(x_{p-1}, [\sigma(i), \sigma(j)])$. Definition~\ref{def:MCS}~(5a) requires $\langle d_{p-1} e_{\sigma(i)}, e_{\sigma(j)} \rangle = \langle d_p e_{i}, e_{j} \rangle$. Equation~(\ref{eq:grad-paths}) follows from these two observations and the inductive hypothesis $\# \mathcal{G}^{\sMCS}(x_{p-1}, [\sigma(i), \sigma(j)]) = \langle d_{p-1} e_{\sigma(i)}, e_{\sigma(j)} \rangle$.  	
	
\textbf{Left cusp}: Suppose the cusp occurs between strands $k$ and $k+1$. Recall the map $\tau$ on indices defined by $\tau(l) = 
\begin{cases} 
l & \mbox{if $l < k$} \\
l+2 & \mbox{if $l \geq k$}.
\end{cases}$ The MCS $\sMCS$ is simple by assumption, hence we have:
\begin{align}
	& \langle d_p e_{k}, e_{k+1} \rangle=1; \label{eq:lemma-grad-paths-1} \\
	& \langle d_p e_{s}, e_{r} \rangle = 0 \mbox{ if } |\{r, s\} \cap \{k, k+1\}| = 1; \mbox{ and } \label{eq:lemma-grad-paths-2} \\ 
	& \langle d_p e_{r}, e_{s} \rangle=\langle d_{p-1} e_{\tau^{-1}(r)}, e_{\tau^{-1}(s)} \rangle \mbox{ if } r,s \notin \{k, k+1\}. \label{eq:lemma-grad-paths-3}
\end{align}
If $i, j \notin \{k, k+1\}$, then equation~(\ref{eq:grad-paths}) follows from equation~(\ref{eq:lemma-grad-paths-3}) using the same argument given in the case of a crossing. Definition~\ref{def:GradientPath}~(3b) ensures $\mathcal{G}^{\sMCS}(x_p, [i, j])$ is empty if $|\{r, s\} \cap \{k, k+1\}| = 1$. Equation~(\ref{eq:grad-paths}) follows from this observation and equation~(\ref{eq:lemma-grad-paths-2}). Finally, if $i=k$ and $j=k+1$, then $\mathcal{G}^{\sMCS}(x_p, [i, j])$ contains one element, similar to the base case of $p=1$. Equation~(\ref{eq:grad-paths}) then follows from this observation and equation~(\ref{eq:lemma-grad-paths-1}).

\textbf{Handleslide}: 
Suppose the handleslide occurs between strands $k$ and $l$. By Definition~\ref{def:MCS}~(5d), $\langle d_p e_i, e_j \rangle \neq \langle d_{p-1} e_i, e_j \rangle$ if and only if $j=l$ and $\langle d_{p-1} e_i, e_k \rangle=1$ or $i=k$ and $\langle d_{p-1} e_l, e_j \rangle=1$.

If $j \neq l$ or $i \neq k$, then $\langle d_p e_i, e_j \rangle = \langle d_{p-1} e_i, e_j \rangle$. There is a bijection between $\mathcal{G}^{\sMCS}(x_p, [i, j])$ and $\mathcal{G}^{\sMCS}(x_{p-1}, [i, j])$ since a path in $\mathcal{G}^{\sMCS}(x_{p-1}, [i, j])$ can be uniquely extended to a path in $\mathcal{G}^{\sMCS}(x_p, [i, j])$. The extended path can not jump along the handleslide. Hence, equation~(\ref{eq:grad-paths}) follows in this case. 

Suppose $j=l$. There is a bijection from $\mathcal{G}^{\sMCS}(x_{p-1}, [i, l]) \cup \mathcal{G}^{\sMCS}(x_{p-1}, [i, k])$ to $\mathcal{G}^{\sMCS}(x_p, [i, l])$. A path in $\mathcal{G}^{\sMCS}(x_{p-1}, [i, l])$ extends to a path in $\mathcal{G}^{\sMCS}(x_p, [i, l])$ by not jumping along the handleslide and a path in $\mathcal{G}^{\sMCS}(x_{p-1}, [i, k])$ extends to a path in $\mathcal{G}^{\sMCS}(x_p, [i, l])$ by jumping along the handleslide. Therefore, we can conclude:

\begin{align}
\# \mathcal{G}^{\sMCS}(x_p, [i, l]) &= \# \left (\mathcal{G}^{\sMCS}(x_{p-1}, [i, l]) \cup \mathcal{G}^{\sMCS}(x_{p-1}, [i, k]) \right ) \nonumber \\
&= \langle d_{p-1} e_i, e_l \rangle + \langle d_{p-1} e_i, e_k \rangle \nonumber \\
&= \langle d_{p} e_i, e_l \rangle \nonumber
\end{align}

The second equality follows from the inductive hypothesis and the final equality follows from the observation made above that $\langle d_{p} e_i, e_l \rangle \neq \langle d_{p-1} e_i, e_l \rangle$ if and only if $\langle d_{p-1} e_i, e_k \rangle = 1$. The argument for the case $i=k$ is similar to the case $j=l$. 
\end{proof}

Suppose $b$ is a crossing between strands $k$ and $k+1$ in the tangle between $x_p$ and $x_{p+1}$. For $i < k$ and $j > k+1$ we let $\mathcal{G}^{\sMCS}(b, [i, k]) = \mathcal{G}^{\sMCS}(x_p, [i, k])$ and $\mathcal{G}^{\sMCS}(b, [k+1, j])=\mathcal{G}^{\sMCS}(x_p, [k+1, j]).$ We say such gradient paths \emph{terminate at $b$}. It follows immediately from Lemma~\ref{lemma:gradient-paths} that $\# \mathcal{G}^{\sMCS}(b, [i, k])= \langle d_p e_i, e_k \rangle$ and $\# \mathcal{G}^{\sMCS}(b, [k+1, j])= \langle d_p e_{k+1}, e_j \rangle.$

\subsection{An isomorphism between $(\salg_{\sMCS}, d)$ and $(\salg(\sfront), \df^{\saug})$} 
\mylabel{sec:isomorphism}

In this section we prove Theorem~\ref{thm:A-form-equivalence} using the argument briefly sketched in the paragraph preceding Section~\ref{sec:grad-paths}. Fix an $A$-form MCS $\sMCS \in \sAMCS$ with $\sMCS =\left( \{(C_m, d_m)\}, \{x_m\}, H \right)$ and MCS-DGA $(\salg_{\sMCS}, d)$ and suppose $\saug \in \sAugL$ is the augmentation associated to $\sMCS$ by the bijection in Theorem~\ref{thm:aug-MCS-bijec}. Recall, in $\sMCS$, a handleslide appears to the left of each augmented crossings of $\saug$ between the crossing strands.

\begin{lemma}
\label{lemma:disk-bijection}
Suppose $a$ is a crossing or right cusp and $b$ is a crossing between strands $k$ and $k+1$ in the tangle between $x_p$ and $x_{p+1}$. Then for $i < k$ there is a  bijection from $\mathcal{G}^{\sMCS}(b, [i, k]) \times \mathcal{M}^{\sMCS}(a; b, [i, k])$ to $\Delta(a; b, [i, k])$ with $(\Omega, \Lambda) \mapsto (D_{\Omega, \Lambda}, \Psi_{\Omega, \Lambda})$ so that $w(D_{\Omega, \Lambda}) = w(\Lambda)$. A similar bijection exists from $\mathcal{G}^{\sMCS}(b, [k+1, j]) \times \mathcal{M}^{\sMCS}(a; b, [k+1, j])$ to $\Delta(a; b, [k+1, j])$ where $j > k+1$.
\end{lemma}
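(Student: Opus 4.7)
The plan is to construct $D_{\Omega,\Lambda}$ by geometrically gluing the gradient path $\Omega$ onto the chord path $\Lambda$ along their common terminating chord $(x_p,[i,k])$, and to let $\Psi_{\Omega,\Lambda}$ record exactly those augmented crossings of $\saug$ at which $\Omega$ performs a handleslide jump. The $A$-form hypothesis is essential here: by Theorem~\ref{thm:aug-MCS-bijec} it guarantees a one-to-one correspondence between handleslide marks of $\sMCS$ and augmented crossings of $\saug$, each handleslide sitting immediately to the left of its associated augmented crossing and between the two crossing strands.

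First I would construct $D_{\Omega,\Lambda}$ directly from the two sequences of chords. The chord path $\Lambda=(\lambda_1,\ldots,\lambda_{n_2})$ originates at $a$ and progresses leftward, while the gradient path $\Omega=(\omega_1,\ldots,\omega_{n_1})$ originates at some left cusp and progresses rightward, with $\omega_{n_1}=\lambda_{n_2}=(x_p,[i,k])$. Concatenating the top endpoints of the $\omega_m$ (from the cusp rightward to $b$) with the top endpoints of the $\lambda_m$ (from $b$ back to $a$), and similarly the bottom endpoints, traces out a closed region on $\sfront$ which I take as the image of $D_{\Omega,\Lambda}$. Definitions~\ref{def:GradientPath}~(3a,b) for $\Omega$ and Definition~\ref{def:ChordPath}~(4a) for $\Lambda$ are precisely the local rules guaranteeing that this region is a properly immersed disk with the originating singularity of Figure~\ref{f:admissible-disks-1}~(a) at $a$ and the terminating singularity of Figure~\ref{f:admissible-disks-1}~(b) at the left cusp where $\Omega$ originates.

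Next I would identify the convex corners of $D_{\Omega,\Lambda}$. Those to the right of $b$ are exactly the convex corners recorded in $w(\Lambda)$, the crossing $b$ itself is a convex corner where the two terminating chords meet, and those to the left of $b$ arise at augmented crossings where $\Omega$ jumps along the adjacent $A$-form handleslide. Setting $\Psi_{\Omega,\Lambda}$ to be this last collection verifies membership in $\Delta(a;b,[i,k])$: every convex corner of $D_{\Omega,\Lambda}$ left of $b$ lies in $\Psi_{\Omega,\Lambda}$, each element of $\Psi_{\Omega,\Lambda}$ is augmented by Theorem~\ref{thm:aug-MCS-bijec}, and $b \notin \Psi_{\Omega,\Lambda}$. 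The identity $w(D_{\Omega,\Lambda},\Psi_{\Omega,\Lambda})=w(\Lambda)$ follows because reading the convex corners of $D_{\Omega,\Lambda}$ that are not in $\Psi_{\Omega,\Lambda}$ counter-clockwise from $a$ yields exactly $(f_1\cdots f_s)\,b\,(h_1\cdots h_t)$. The inverse bijection is obtained by slicing a given $(D,\Psi)\in\Delta(a;b,[i,k])$ by the vertical lines $x=x_m$ of $\sMCS$ and reading off the chord sequences to the right and left of $b$ as $\Lambda$ and $\Omega$ respectively; the requirement that every convex corner of $D$ left of $b$ lies in $\Psi$, combined with the $A$-form placement of handleslides, forces each element of $\Psi$ to correspond to a handleslide jump in $\Omega$, so the two constructions are mutually inverse. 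The analogous statement for $j>k+1$ proceeds by a vertical reflection.

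The main obstacle will be the compatibility analysis at handleslide jumps in $\Omega$, where one must verify that the picture imposed by the $A$-form hypothesis aligns precisely with the local configuration of a convex corner in an admissible disk. Distinguishing whether $\Omega$ jumps upward or downward along the handleslide, and tracking which side of each strand lies inside the bounded region, is the real content of the argument; once this local model is fixed, the global assembly into an admissible disk and the verification that the two constructions are mutually inverse reduces to bookkeeping governed by the local rules in Definitions~\ref{def:ChordPath}~(4) and \ref{def:GradientPath}~(3).
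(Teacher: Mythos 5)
Your overall strategy---glue $\Omega$ and $\Lambda$ along the common chord $(x_p,[i,k])$, trace out the boundary of an immersed disk, and invert by slicing a disk along the vertical lines $x=x_m$---is the same as the paper's. However, there is a genuine gap in your description of $\Psi_{\Omega,\Lambda}$ and of the convex corners lying to the right of $b$. You declare that $\Psi_{\Omega,\Lambda}$ consists only of the augmented crossings at which $\Omega$ jumps, and that the convex corners of $D_{\Omega,\Lambda}$ to the right of $b$ are ``exactly the convex corners recorded in $w(\Lambda)$.'' This omits a third source of convex corners: the chord path $\Lambda$ itself may jump along the $A$-form handleslide sitting immediately to the left of an augmented crossing $e$ (Definition~\ref{def:ChordPath}~(4d)), and such a jump is \emph{not} a convex corner of $\Lambda$ in the sense of Definition~\ref{def:ConvexCorners}, so it does not appear in $w(\Lambda)$---yet it does produce a convex corner of the glued disk at $e$. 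The paper's proof accordingly sets $\Psi_{\Omega,\Lambda}=\{e\in Q(L)\mid \Omega \text{ or } \Lambda \text{ jumps along } e\}$ and lists ``$\Lambda$ jumps along $e$'' as one of the four sources of convex corners (Figure~\ref{f:bijection-1}~(b)).

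With your smaller $\Psi_{\Omega,\Lambda}$ the map fails to be surjective: an admissible disk pair $(D,\Psi)\in\Delta(a;b,[i,k])$ in which $D$ has an augmented convex corner $e$ strictly to the right of $b$ with $e\in\Psi$ has no preimage, since in your correspondence every convex corner right of $b$ is forced to be an unaugmented-style corner contributing a letter to $w(\Lambda)$. These disk pairs are not exotic---they are precisely the ones that realize the cancellation $\phi^{\saug}(q)=q+\saug(q)$ in $\df^{\saug}$, and dropping them would break the count in equation~(\ref{eqn:thm}) and hence the proof of Theorem~\ref{thm:A-form-equivalence}. The fix is exactly the local analysis you defer to at the end: at an augmented crossing $e$ to the right of $b$, the disk boundary may either turn the corner at $e$ with $e\notin\Psi$ (a convex corner of $\Lambda$, contributing to $w(\Lambda)$), turn the corner with $e\in\Psi$ (a jump of $\Lambda$ along the adjacent handleslide), or pass through without a corner (no jump); all three local models must be matched, not just the first and third.
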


\begin{figure}[t]
\labellist
\small\hair 2pt
\pinlabel {$b$} [tl] at 25 60
\pinlabel {(a)} [tl] at 37 35
\pinlabel {(b)} [tl] at 172 35
\pinlabel {(c)} [tl] at 309 35
\pinlabel {(d)} [tl] at 445 35
\pinlabel {(e)} [tl] at 582 35
\pinlabel {(f)} [tl] at 695 35
\endlabellist
\centering
\includegraphics[scale=.45]{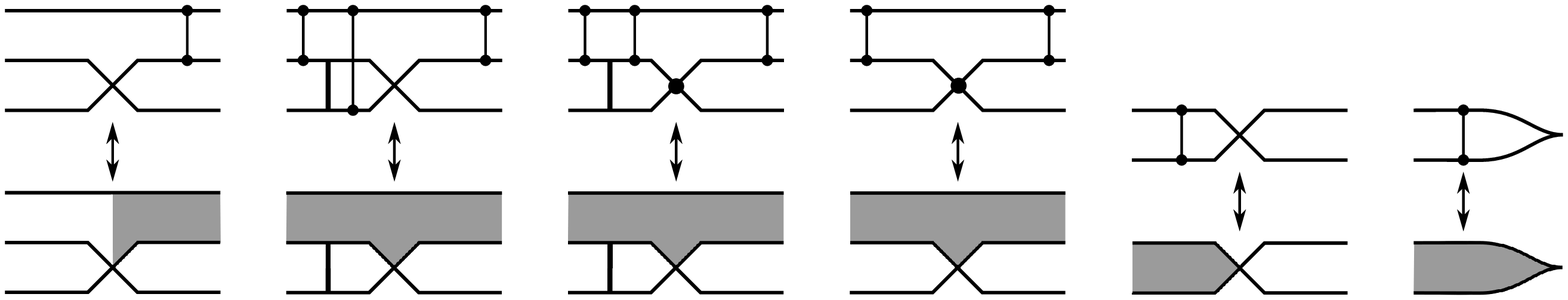}
\caption{The correspondence between local neighborhoods of an admissible disk and chords in a chord path.}
\label{f:bijection-1}
\end{figure}

\begin{figure}[t]
\labellist
\small\hair 2pt
\pinlabel {$b$} [tl] at 365 53
\pinlabel {(a)} [tl] at 34 28
\pinlabel {(b)} [tl] at 188 28
\pinlabel {(c)} [tl] at 373 28
\endlabellist
\centering
\includegraphics[scale=.5]{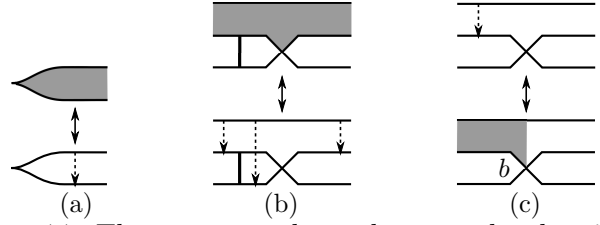}
\caption{The correspondence between local neighborhoods of an admissible disk and chords in a gradient path.}
\label{f:bijection-3}
\end{figure}

\begin{proof}
We will prove the case $i<k$. The case $j>k+1$ follows from a similar argument. 

Suppose $(\Omega, \Lambda) \in \mathcal{G}^{\sMCS}(b, [i, k]) \times \mathcal{M}^{\sMCS}(a; b, [i, k])$. Let $c$ denote the originating left cusp of $\Omega$. We will use the chords in $\Omega$ and $\Lambda$ as a recipe to build an admissible disk $D_{\Omega, \Lambda}$ originating at $c$ and terminating at $a$. An example of this process is given in Figure~\ref{f:bijection-example}. We will say $\Lambda$ (resp. $\Omega$) \emph{jumps along $e$} if $e$ is an augmented crossing and a chord in $\Lambda$ (resp. $\Omega$) jumps along the handleslide mark just to the left of $e$; see Figure~\ref{f:bijection-1}~(b) and Figure~\ref{f:bijection-3}~(b). At most one chord of $\Omega \cup \Lambda$ can jump along any given crossing. 

Since $\sfront$ is nearly plat, the intersection of an admissible disk with a vertical line in the $xz$-plane is a connected, vertical arc. Thus, we will define the disk $D_{\Omega, \Lambda}$ by describing the intersection $D_{\Omega, \Lambda} \cap (\{x\} \times \rr)$ for values of $x$ between $c$ and $a$. In a small neighborhood of $c$, $D_{\Omega, \Lambda} \cap (\{x\} \times \rr)$ is a vertical line connecting the two strands that meet at $c$; see Figure~\ref{f:bijection-3}~(a). The disk $D_{\Omega, \Lambda}$ has a convex corner at a crossing $e$ if and only if $e$ satisfies one of the following:
\begin{enumerate}
	\item $e=b$ (Figures~\ref{f:bijection-1}~(a)~and~\ref{f:bijection-3}~(c));
	\item $e$ is a convex corner of $\Lambda$ (Figure~\ref{f:bijection-1}~(c)~and~(d));
	\item $\Lambda$ jump along $e$ (Figure~\ref{f:bijection-1}~(b)); or 
	\item $\Omega$ jumps at $e$ (Figure~\ref{f:bijection-3}~(b)).
\end{enumerate}
In each case, the convex corner of $D_{\Omega, \Lambda}$ at $e$ is determined by the chords of $\Omega$ and $\Lambda$ as described in Figures~\ref{f:bijection-1}~and~\ref{f:bijection-3}. Away from the convex corners, we require the endpoints of the vertical line $D_{\Omega, \Lambda} \cap (\{x\} \times \rr)$ to vary smoothly along the strands of $\sfront$. This requirement ensures that away from the convex corners the vertical mark $D_{\Omega, \Lambda} \cap (\{x\} \times \rr)$ agrees with the chord at $x$ in $\Omega \cup \Lambda$. In particular, $D_{\Omega, \Lambda}$ looks like either Figure~\ref{f:bijection-1}~(e)~or~\ref{f:bijection-1}~(f) near $a$. Thus $(D_{\Omega, \Lambda}, \Psi_{\Omega, \Lambda})$ is an admissible disk pair in $\mathcal{M}^{\sMCS}(a; b, [i, k])$ where we define $\Psi_{\Omega, \Lambda} = \{ e \in Q(L) | $ $\Omega$ or $\Lambda$ jumps along $e \}$. Each crossing in $\Psi_{\Omega, \Lambda}$ corresponds to an augmented crossing that is convex in either $\Omega$ or $\Lambda$, but does not appear in $w(\Lambda)$. The remaining convex corners of $D_{\Omega, \Lambda}$ correspond to the convex corners of $\Lambda$ contributing to $w(\Lambda)$. Hence, we have $w(D_{\Omega, \Lambda}) = w(\Lambda)$ as desired. This map is injective since the pair $(\Omega, \Lambda) \in \mathcal{G}^{\sMCS}(b, [i, k]) \times \mathcal{M}^{\sMCS}(a; b, [i, k])$ is determined by $c$ and the behavior of $\Omega$ and $\Lambda$ near crossings, all of which is encoded by $D_{\Omega, \Lambda}$.

Suppose $(D, \Psi) \in \Delta(a; b, [i, k])$ terminates at $c$. We can ``unglue'' $(D, \Psi) \in \Delta(a; b, [i, k])$ to create a pair $(\Omega, \Lambda) \in \mathcal{G}^{\sMCS}(b, [i, k]) \times \mathcal{M}^{\sMCS}(a; b, [i, k])$ with $\Omega$ originating at $c$ so that $D=D_{\Omega, \Lambda}$ and $w(D) = w(\Lambda)$ and thus prove surjectivity. Away from convex corners of $D$, the chord of $\Omega$ (resp. $\Lambda$) at $x_p$ is the intersection of $D$ with $\{x_p\} \times \rr$. If $e \in \Psi$ (resp. $e \notin \Psi$) is a convex corner of $D$, the chords of $\Lambda$ and $\Omega$ (resp. $\Lambda$) are determined by $D$ as described in Figures~\ref{f:bijection-1}~(b) and \ref{f:bijection-3}~(b) (resp. Figure~\ref{f:bijection-1}~(c)~and~(d)). Near $b$ the terminating chords of $\Lambda$ and $\Omega$ are described in Figures~\ref{f:bijection-1}~(a) and \ref{f:bijection-3}~(c). The correspondences given in Figures~\ref{f:bijection-1} and \ref{f:bijection-3} ensure $D=D_{\Omega, \Lambda}$ and $w(D) = w(\Lambda)$ as desired.
\end{proof}

\begin{figure}[t]
\labellist
\small\hair 2pt
\pinlabel {$c$} [tl] at 20 185
\pinlabel {$b$} [tl] at 175 230
\pinlabel {$a$} [tl] at 495 185
\pinlabel {$e_1$} [tl] at 118 140
\pinlabel {$e_2$} [tl] at 285 230
\pinlabel {$e_3$} [tl] at 395 230
\pinlabel {$c$} [tl] at 20 73
\pinlabel {$b$} [tl] at 175 118 
\pinlabel {$a$} [tl] at 495 73
\pinlabel {$e_1$} [tl] at 118 30
\pinlabel {$e_2$} [tl] at 285 118
\pinlabel {$e_3$} [tl] at 395 118
\endlabellist
\centering
\includegraphics[scale=.6]{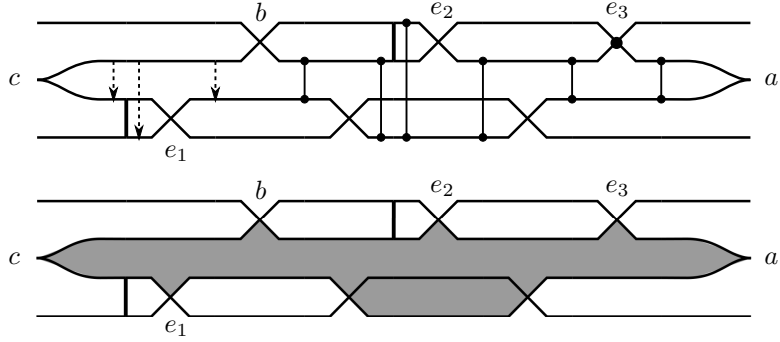}
\caption{A gradient/chord path pair $(\Omega, \Lambda)$ and its corresponding admissible disk pair $(D_{\Omega, \Lambda}, \Psi_{\Omega, \Lambda})$ with $\Psi_{\Omega, \Lambda}=\{e_1, e_2\}$ and $w(D_{\Omega, \Lambda})=e_3 b$.}
\label{f:bijection-example}
\end{figure}

Combining Propositions~\ref{prop:d-aug},~\ref{prop:d-MCS},~and~\ref{lemma:disk-bijection}, we can now prove Theorem~\ref{thm:A-form-equivalence}. 

\begin{proof}[Proof of Theorem~\ref{thm:A-form-equivalence}]
The MCS-DGA $(\salg_{\sMCS}, d)$ and CE-DGA $(\salg(\sfront), \df^{\saug})$ have the same graded generators, hence, it remains to show $d = \df^{\saug}$. Suppose $a \in Q(L)$ is a crossing or right cusp, $b \in Q(L)$ is a crossing between strands $k$ and $k+1$  in the tangle between $x_p$ and $x_{p+1}$, and $i < k$. Then we have,
\begin{align}
\sum_{(D, \Psi) \in \Delta(a; b, [i, k])} w(D, \Psi) &= \sum_{(\Omega, \Lambda) \in \mathcal{G}^{\sMCS}(b, [i, k]) \times \mathcal{M}^{\sMCS}(a; b, [i, k])} w(\Lambda) \\ \nonumber
&= \# \mathcal{G}^{\sMCS}(b, [i, k]) \sum_{\Lambda \in\mathcal{M}^{\sMCS}(a; b, [i, k])} w(\Lambda) \\ \nonumber
&= \langle d_p e_i, e_k \rangle \sum_{\Lambda \in\mathcal{M}^{\sMCS}(a; b, [i, k])} w(\Lambda) \nonumber
\end{align}
where the first equality follows from the word-preserving bijection from $\mathcal{G}^{\sMCS}(b, [i, k]) \times \mathcal{M}^{\sMCS}(a; b, [i, k])$ to $\Delta(a; b, [i, k])$ given in Lemma~\ref{lemma:disk-bijection}, the second equality is modular arithmetic, and the third equality follows from Lemma~\ref{lemma:gradient-paths}. If $\langle d_p e_i, e_k \rangle = 0$, then $\mathcal{M}^{\sMCS}(a; b, [i, k])= \emptyset$ and if $\mathcal{M}^{\sMCS}(a; b, [i, k]) \neq \emptyset$, then $\langle d_p e_i, e_k \rangle = 1$. Hence, we have
\begin{equation}
\label{eqn:thm}
\sum_{(D, \Psi) \in \Delta(a; b, [i, k])} w(D, \Psi) = \sum_{\Lambda \in\mathcal{M}^{\sMCS}(a; b, [i, k])} w(\Lambda).
\end{equation}
A similar equality holds for $j>k+1$. Finally, $da = \df^{\saug}a$ follows from equation~(\ref{eqn:thm}) and Propositions~\ref{prop:d-aug}~and~\ref{prop:d-MCS}.
\end{proof}

Theorems~\ref{thm:A-form-equivalence}~and~\ref{thm:linear-iso}, along with our previous observation that every MCS is equivalent to an $A$-form MCS, imply the following.

\begin{corollary}
\label{cor:linear-homology}
Let $H^{\sMCS}_{*}(\sfront)$ denote the homology groups of $(A_{\sMCS}, d_1)$ for an MCS $\sMCS \in \sFMCS$ and let $LCH_{*}^{\saug}(\sfront)$ denote the homology groups of $(\salg_1(\sfront), \df^{\saug}_1)$ for an augmentation $\saug \in \sAugL$. Then $$\{H^{\sMCS}_{*}(\sfront)\}_{\sMCS \in \sFMCS} = \{LCH_{*}^{\saug}(\sfront)\}_{\saug \in \sAugL}.$$
\end{corollary}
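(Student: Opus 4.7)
The plan is to prove both set-theoretic inclusions separately, each by producing an $A$-form MCS as a bridge between an arbitrary MCS (or augmentation) and the corresponding object on the other side.

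First I would establish the inclusion $\{H^{\sMCS}_{*}(\sfront)\}_{\sMCS \in \sFMCS} \subseteq \{LCH_{*}^{\saug}(\sfront)\}_{\saug \in \sAugL}$. Given an arbitrary MCS $\sMCS \in \sFMCS$, the paper has already observed that every MCS is equivalent to an $A$-form MCS, so choose $\sMCS' \in \sAMCS$ with $\sMCS \sim \sMCS'$. By Theorem~\ref{thm:linear-iso}, the linearized complexes $(A_{\sMCS}, d_1)$ and $(A_{\sMCS'}, d'_1)$ are isomorphic, and in particular $H^{\sMCS}_*(\sfront) \cong H^{\sMCS'}_*(\sfront)$. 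Let $\saug$ be the augmentation assigned to $\sMCS'$ by the bijection of Theorem~\ref{thm:aug-MCS-bijec}. Theorem~\ref{thm:A-form-equivalence} identifies $(\salg_{\sMCS'}, d) = (\salg(\sfront), \df^{\saug})$ under a grading-preserving bijection of generators, which restricts to an identification of linearized complexes $(A_{\sMCS'}, d'_1) = (\salg_1(\sfront), \df^{\saug}_1)$. Consequently $H^{\sMCS'}_*(\sfront) = LCH_*^{\saug}(\sfront)$, and combining this with the isomorphism above places $H^{\sMCS}_*(\sfront)$ in the right-hand set.

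For the reverse inclusion, I would start with an arbitrary augmentation $\saug \in \sAugL$. Theorem~\ref{thm:aug-MCS-bijec} supplies an $A$-form MCS $\sMCS \in \sAMCS \subset \sFMCS$ corresponding to $\saug$. Again applying Theorem~\ref{thm:A-form-equivalence} yields $(\salg_{\sMCS}, d) = (\salg(\sfront), \df^{\saug})$, and restricting to length-one monomials gives $H^{\sMCS}_*(\sfront) = LCH_*^{\saug}(\sfront)$. Thus $LCH_*^{\saug}(\sfront)$ appears in the left-hand set.

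Since there are no non-trivial obstacles in the argument beyond invoking Theorems~\ref{thm:linear-iso}~and~\ref{thm:A-form-equivalence} together with the existence of an $A$-form representative in each MCS equivalence class and the $A$-form/augmentation bijection of Theorem~\ref{thm:aug-MCS-bijec}, the only item worth double-checking is that the identification of Theorem~\ref{thm:A-form-equivalence} is an identification of \emph{graded} differential graded algebras and therefore restricts cleanly to the degree-preserving linearizations; this is immediate from the statement of that theorem, which emphasizes the grading-preserving nature of the bijection of generators. With that verified, the corollary is a formal consequence.
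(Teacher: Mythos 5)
Your proposal is correct and follows exactly the route the paper intends: the paper's own ``proof'' is the single sentence that the corollary follows from Theorems~\ref{thm:A-form-equivalence} and~\ref{thm:linear-iso} together with the fact that every MCS is equivalent to an $A$-form MCS, and your two inclusions simply spell out those routine details (including the correct use of the bijection of Theorem~\ref{thm:aug-MCS-bijec} and the observation that the DGA identification restricts to the linearizations since both differentials have vanishing constant term).
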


Chekanov proved in \cite{Chekanov2002} that $\{LCH_{*}^{\saug}(\sfront)\}_{\saug \in \sAugL}$ is a Legendrian isotopy invariant, hence, $\{H^{\sMCS}_{*}(\sfront)\}_{\sMCS \in \sFMCS}$ is as well.

\addcontentsline{toc}{section}{Bibliography} 
\bibliographystyle{amsplain}
\bibliography{Bibliography}

\providecommand{\bysame}{\leavevmode\hbox to3em{\hrulefill}\thinspace}
\providecommand{\MR}{\relax\ifhmode\unskip\space\fi MR }
\providecommand{\MRhref}[2]{%
  \href{http://www.ams.org/mathscinet-getitem?mr=#1}{#2}
}
\providecommand{\href}[2]{#2}
\begin{thebibliography}{10}

\bibitem{Barannikov1994}
S.~A. Barannikov, \emph{The framed {M}orse complex and its invariants}, Adv.
  Soviet Math. \textbf{21} (1994), 93--115.

\bibitem{Betz1994}
M.~Betz and R.~L. Cohen, \emph{Graph moduli spaces and cohomology operations},
  Tr. J. of Mathematics \textbf{18} (1994), 23--41.

\bibitem{Chekanov2002a}
Y.~V. Chekanov, \emph{Differential algebra of {L}egendrian links}, Invent.
  Math. (2002), no.~150, 441--483.

\bibitem{Chekanov2002}
\bysame, \emph{Invariants of {L}egendrian knots}, Proceedings of the
  International Congress of Mathematicians (Beijing), vol.~II, Higher Ed.
  Press, 2002, pp.~385--394.

\bibitem{Chekanov2005}
Y.~V. Chekanov and P.~E. Pushkar, \emph{Combinatorics of fronts of {L}egendrian
  links and {A}rnol'd's 4-conjectures}, Uspekhi Mat. Nauk \textbf{60} (2005),
  99--154.

\bibitem{Civan2011}
G.~Civan, P.~Koprowski, J.~Etnyre, J.~Sabloff, and A.~Walker, \emph{Product
  structures for legendrian contact homology}, Math. Proc. Camb Phil. Soc.
  \textbf{150} (2011), no.~2, 291--311.

\bibitem{Ekholm2007}
T.~Ekholm, \emph{Morse flow trees and {L}egendrian contact homology in 1-ject
  spaces}, Geometry \& Topology \textbf{11} (2007), 1083--1224.

\bibitem{Eliashberg2000}
Ya. Eliashberg, A.~Givental, and H.~Hofer, \emph{Introduction to {S}ymplectic
  {F}ield {T}heory}, Geom. Funct. Analysis \textbf{10} (2000), no.~3, 560--673.

\bibitem{Etnyre2002}
J.~Etnyre, L.~Ng, and J.~Sabloff, \emph{Invariants of {L}egendrian knots and
  coherent orientations}, J. Symplectic Geom. \textbf{1} (2002), no.~2,
  321--367.

\bibitem{Fuchs2003}
D.~Fuchs, \emph{{C}hekanov-{E}liashberg invariant of {L}egendrian knots:
  existence of augmentations}, J. Geom. Phys. \textbf{47} (2003), no.~1,
  43--65.

\bibitem{Fuchs2004}
D.~Fuchs and T.~Ishkhanov, \emph{Invariants of {L}egendrian knots and
  decompositions of front diagrams}, Mosc. Math. J. \textbf{4} (2004), no.~3,
  707--717.

\bibitem{Fuchs2008}
D.~Fuchs and D.~Rutherford, \emph{Generating families and {L}egendrian contact
  homology in the standard contact space}, Journal of Topology \textbf{4}
  (2011), no.~1, 190--226.

\bibitem{Fukaya1997a}
K.~Fukaya, \emph{Morse homotopy and its quantization}, Geometric topology
  (Athens, GA, 1993), AMS/IP Stud. Adv. Math., vol. 2.1, Amer. Math. Soc.,
  1997, pp.~409--440.

\bibitem{Fukaya1997}
K.~Fukaya and Y-{G}. Oh, \emph{Zero-loop open strings in the cotangent bundle
  and {M}orse homotopy}, Asian J. of Math. \textbf{1} (1997), no.~1, 96--180.

\bibitem{Hatcher1973}
A.~Hatcher and J.~Wagoner, \emph{Pseudo-isotopies of compact manifolds},
  Asterisque, no.~6, Soci\'{e}t\'{e} Math\'{e}matique de France, 1973.

\bibitem{Henry2011}
M.~B. Henry, \emph{Connections between {F}loer-type invariants and {M}orse-type
  invariants of {L}egendrian knots}, Pacific Journal of Mathematics
  \textbf{249} (2011), no.~1, 77--133.

\bibitem{Jordan2006}
J.~Jordan and L.~Traynor, \emph{Generating family invariants for {L}egendrian
  links of unknots}, Alg. Geom. Top. \textbf{6} (2006), 895--933.

\bibitem{Laudenbach1992}
F.~Laudenbach, \emph{On the {T}hom-{S}male complex}, appendix to {J}-{M}
  {B}ismut and {W}. {Z}hang, An extension of a theorem by {C}heeger and
  {M}uller, Asterisque, vol. 205, 1992.

\bibitem{Ng2003}
L.~Ng, \emph{Computable {L}egendrian invariants}, Topology \textbf{42} (2003),
  55--82.

\bibitem{Ng2006}
L.~Ng and J.~Sabloff, \emph{The correspondence between augmentations and
  rulings for {L}egendrian knots}, Pacific J. Math. \textbf{224} (2006), no.~1,
  141--150.

\bibitem{L.Traynor2004}
L.~Ng and L.~Traynor, \emph{Legendrian solid-torus links}, Journal of
  Symplectic Geom. \textbf{2} (2004), no.~3, 411--443, arXiv:math.SG/0407068.

\bibitem{Sabloff2005}
J.~Sabloff, \emph{Augmentations and rulings of {L}egendrian knots}, Int. Math.
  Res. Not. \textbf{19} (2005), 1157--1180.

\bibitem{Traynor1997a}
L.~Traynor, \emph{Legendrian circular helix links}, Math. Proc. Camb. Phil.
  Soc. \textbf{122} (1997), 301--314.

\bibitem{Traynor2001}
\bysame, \emph{Generating function polynomials for {L}egendrian links}, Geom.
  Topol. \textbf{5} (2001), 719--760.

\end{thebibliography}

\end{document}